\documentclass[a4paper,10pt]{amsart}
\usepackage[utf8]{inputenc}
\usepackage{amsmath,mathtools,amsthm, amssymb,bm, enumerate}
\usepackage[all]{xy}
\usepackage{xcolor}
\title[Groups of type $\E_8$]{Groups of type $\E_8$ over rings via TKK-algebras and their extremal elements}
\author[S.\ Alsaody and J.\ Desmet]{Seidon Alsaody$^\dagger$ and Jari Desmet$^\ddagger$}
\address{$^\dagger$ Uppsala University, Department of Mathematics, P.\ O.\ Box 480, 751 06 Uppsala, Sweden. Email address: seidon.alsaody@math.uu.se}
\address{$^\ddagger$ Ghent University, Department of Mathematics, Computer Science and Statistics, Krijgslaan 299--S9, 9000 Gent, Belgium.
Email address: jari.desmet@ugent.be}

\newtheorem{Thm}{Theorem}[section]
\newtheorem{Prp}[Thm]{Proposition}
\newtheorem{Cor}[Thm]{Corollary}
\newtheorem{Lma}[Thm]{Lemma} 
\theoremstyle{definition}
\newtheorem{Def}[Thm]{Definition}
\newtheorem{Rk}[Thm]{Remark}
\newtheorem{Ex}[Thm]{Example}
\numberwithin{equation}{section}

\DeclareMathOperator{\Tr}{Tr}
\newcommand{\Ralg}{R\mathchar45\mathbf{rings}}
\newcommand{\Salg}{S\mathchar45\mathbf{rings}}

\renewcommand{\phi}{\varphi}

\newcommand{\ad}{\mathrm{ad}}

\newcommand{\End}{\mathrm{End}}

\newcommand{\C}{\mathbb{C}}

\newcommand{\Z}{\mathbb{Z}}
\newcommand{\Gm}{\mathbb{G}_{\mathrm{m}}}
\newcommand{\Q}{\mathbb{Q}}
\newcommand{\D}{\mathrm{D}}
\newcommand{\E}{\mathrm{E}}
\newcommand{\F}{\mathrm{F}}
\newcommand{\G}{\mathrm{G}}
\newcommand{\tr}{\mathrm{tr}}
\newcommand{\diag}{\mathrm{diag}}
\newcommand{\sll}{\mathfrak{sl}}

\newcommand{\Id}{\mathrm{Id}}

\newcommand{\bAut}{\mathbf{Aut}}
\newcommand{\bSim}{\mathbf{Sim}}

\newcommand{\bAutgr}{\mathbf{Aut}_\mathrm{gr}}

\newcommand{\bGL}{\mathbf{GL}}
\newcommand{\Spec}{\mathrm{Spec}}

\newcommand{\bStab}{\mathbf{Stab}}

\newcommand{\bX}{\mathbf{X}}
\newcommand{\bY}{\mathbf{Y}}
\newcommand{\bG}{\mathbf{G}}

\newcommand{\bC}{\mathbf{C}}
\newcommand{\bU}{\mathbf{U}}
\newcommand{\bE}{\mathbf{E}}

\newcommand{\bInv}{\mathbf{Inv}}
\newcommand{\bStr}{\mathbf{Str}}
\newcommand{\bL}{\mathbf{L}}
\newcommand{\tX}{\widetilde{\mathbf{X}}}
\newcommand{\tY}{\widetilde{\mathbf{Y}}}

\newcommand{\bone}{\mathbf{1}}
\newcommand{\ZS}{\Z[\tfrac16]}
\newcommand{\tbtmat}[4]{\begin{pmatrix}
#1 &#2\\
#3 & #4
\end{pmatrix}}

\begin{document}

\begin{abstract} Over any commutative ring containing $\tfrac16$, we study Lie algebras $L$ of type $\E_8$ that arise from the Tits--Kantor--Koecher (TKK) construction on a Brown algebra, and their twisted forms. We construct a smooth scheme $\bY$ of pairs of extremal elements in $L$. When $L$ arises from the TKK-construction, we express the automorphism group, of type $\E_8$, as an $\E_7$-torsor over $\bY$. We show that twisting by this torsor produces the graded isomorphism classes of those algebras isomorphic to $L$, and parametrize these classes by using $\bY$. We show that this torsor is non-trivial, yielding isomorphic Lie algebras of type $\E_8$ that are not graded isomorphic, as opposed to the behaviour over fields.
\end{abstract}

\keywords{Groups of type $\E_8$, Brown algebras, Tits--Kantor--Koecher construction, TKK-Lie algebras, extremal elements, torsors, faithfully flat descent.}

\subjclass{17B25, 17B60, 14L15, 20G35, 20G41.}
\maketitle

\section{Introduction}
In the setting of Lie groups, algebraic groups over fields, or affine group schemes over commutative rings, groups of type $\E_8$ are the largest of the exceptional groups, having dimension 248. Their study presents particular difficulties, largely owing to the fact that they do not have non-trivial linear representations of low dimension, a fact that sets them apart from the smaller exceptional groups of types $\G_2$, ($\D_4$), $\F_4$, $\E_6$ and $\E_7$. This is a major reason why groups of type $\E_8$ are seen as ``the most exceptional'', as noted by Garibaldi in \cite{GarE8}, to which we refer for a historical survey and outlook, particularly over fields. 

One way to get around this difficulty is to construct varieties or schemes of lower dimension that arise naturally in some sense, and on which groups of type $\E_8$ act. This is the aim of the present work, set over commutative rings where $6$ is invertible.

A significant class of groups of type $\E_8$ consists of automorphism groups of TKK-algebras, i.e.\ algebras that arise from the TKK-construction, on a Brown algebra. We use this to express the automorphism group of such an algebra $L$ as a fibration (more precisely a homogeneous space or torsor) over a certain scheme $\bY$, with fibres being torsors under groups of type $\E_7$. The scheme $\bY$ consists of pairs of extremal elements in $L$, and part of the work is to view these elements from a scheme theoretic perspective. 

Actions of Lie groups of type $\E_8$ and their stabilizers have been studied in the literature. Notably, \cite{IY} and \cite{YIS} describe, over $\mathbb R$ and $\mathbb C$, the homogeneous space $G/H$ where $G$ is the split or real compact Lie group of type $\E_8$, and $H$ is of type $\E_7$. This is indeed the kind of homogeneous space of interest to us, and the paper \cite{YIS} served as an inspiration for us, even though the setting is quite different. The authors do express real exceptional Lie groups, their Lie algebras and their Killing forms using nonassociative algebras, building on the work \cite{Fre} of Freudenthal. In \cite{GKN}, the authors, again over $\mathbb R$ and $\mathbb C$, study non-linear actions of groups of type $\E_8$ on 57-dimensional linear spaces. This was also a primer in our direction. On the other hand, the general theory of flag varieties of algebraic groups provides a conceptual framework but does not provide an explicit construction in terms of concrete algebras. Our approach is to use the action of $\bAut(L)$ on $\bY$ induced from the natural action on $L$, and describe it geometrically in terms of stabilizers and their fppf-quotients on the one hand, and algebraically in terms of Brown algebras and the TKK-construction. 

The approach follows the programme initiated in \cite{AG}, formalized in \cite{Als1}  and continued in \cite{Als2}, where isotopes of nonassociative algebras are expressed as, and used to parametrize, torsors under smaller groups. The torsors are with respect to the fppf topology, which is the framework used here for quotients, descent and cohomology. In \cite{Als2}, the main objects were Brown algebras and Freudenthal triple systems  constructed from them. The automorphism group of such a Brown algebra has, as connected component, a simple, simply connected group scheme of type $\E_6$, while the invariance group of the related triple system is simple, simply connected of type $\E_7$. This allowed isotopes of these Brown algebras to be used to parametrize homogeneous spaces of type $\E_7/\E_6$. In this work, we use these algebras and their triple systems as our starting point when approaching groups of type $\E_8$ and quotients of type $\E_8/\E_7$. As it turns out, the torsors thus obtained correspond to the different gradings on the TKK-algebras we consider. Various groups play a role, such as structure groups and (graded) automorphism groups, and we prove their smoothness and study their interplay along the way.

Extremal elements in Lie algebras are elements spanning one-dimensional inner ideals. An inner ideal in a Lie algebra $L$ over $R$ is an  $R$-submodule $I\subseteq L$ such that $[I,[I,L]]\subseteq I$. The systematic study of inner ideals in Lie algebras was initiated by Georgia Benkart in \cite{Ben}. Every ideal in a Lie algebra is, \emph{a fortiori}, an inner ideal, but inner ideals exist plentifully also in simple Lie algebras. Extremal elements, i.e.\ elements $x$ satisfying $[x,[x,L]]\in Rx$, have been studied in e.g.\ \cite{CSUW} and \cite{DMM}. They constitute a natural subset in any Lie algebra. Requiring extremal elements to be unimodular removes the singularities, leaving us with a smooth scheme $\bY$ of pairs of extremal elements which will play the key role in our parametrization.

The TKK-construction on Brown algebras over fields has been studied by various authors since Allison in \cite{AllTKK}. Rigby, in his PhD thesis \cite{Rig}, established the theory in a modern language, using algebraic groups. Our presentation is based on his treatment. The theory of Brown algebras rests on that of Albert algebras, for which the standard reference is \cite{GPRbook}.

The paper is organized as follows. Section 2 recalls and lays out the preliminaries of Brown algebras that are necessary for the TKK-construction on Brown algebras, which is presented in Section 3. There, we also give an expression of the associative bilinear form in terms of the TKK-construction, not assuming that $5$ is invertible. In Section 4, we prove the smoothness of the affine group schemes related to these algebras and constructions. Section 5 introduces the functor of extremal elements and establishes its basic properties, while in Section 6 we show that pairs of extremal elements form a smooth affine scheme with a locally transitive $\E_8$-action, where the point stabilizers are of type $\E_7$. The torsors that this action provides are studied in the final Section 7, where their twists are linked to gradings of the TKK-construction, and shown to be in general non-trivial.

\subsection*{Conventions and notation} All rings are assumed to be unital, associative and commutative. We fix a base ring $R$ and throughout, we make the assumption that $6\in R^*$. With the exception of Corollary \ref{Ckillingform}, we do not require $5$ to be invertible. An algebra over $R$ is an $R$-module endowed with a multiplication, which is in general neither associative, commutative or unital. We write multiplication as juxtaposition or as a bracket in the case of Lie algebras.

By an \emph{$R$-ring} we mean a unital, commutative and associative $R$-algebra. We denote the category of $R$-rings by $\Ralg$. Given an $R$-algebra $C$, we denote by $\bAut(C)$ its automorphism group scheme. This is the affine $R$-group scheme defined by $\bAut(C)(S)=\mathrm{Aut}(C\otimes_RS)$ for each $S\in\Ralg$.

Let $M$ be an $R$-module. We denote the $R$-linear span of a subset $X\subseteq M$ by $\langle X\rangle$, and often write $Rx$ for the span of a single element $x\in M$.  If $S$ is an $R$-ring, we write $M_S$ for the $S$-module $M\otimes_RS$. If $\bG$ is a functor defined on $\Ralg$, we write $\bG_S$ for its restriction to $\Salg$. Unadorned tensor products are over $R$.

We call a bilinear form $b$ on $M$ \emph{regular} if the map $x\mapsto b(x,-)$ is an isomorphism from $M$ to its dual.

All sheaves, including all torsors, are with respect to the fppf-topology. If $\bG$ is an $R$-group scheme, we write $H^1_{\mathrm{fppf}}(R,\bG)$ for its first fppf-cohomology; it is a pointed set that is in bijection with the set of $R$-isomorphism classes of $\bG$-torsors over $R$.

\subsection*{Acknowledgment} We are grateful to Michiel Smet for fruitful discussions, which lead to our current version of Proposition \ref{Psmooth} and the reference \cite{Sme}.

\section{Preliminaries: Brown algebras over rings} We will recall the basics of Brown algebras over rings, starting with reduced Brown algebras. For more details we refer to \cite{Als2}. 

Given an Albert algebra $A$ over $R$, set $B=R\oplus R\oplus A\oplus A$, which, following tradition, we write as 
\[B=\begin{pmatrix}
 R&A\\ A& R   
\end{pmatrix}.\]
This is a projective $R$-module of rank 56, which becomes an $R$-algebra with involution under the multiplication
\[\begin{pmatrix}
 r_1&a_1\\ a_2& r_2   
\end{pmatrix}\begin{pmatrix}
 r_1'&a_1'\\ a_2'& r_2'   
\end{pmatrix}=\begin{pmatrix}
 r_1r_1'+T(a_1,a_2')&r_1a_1'+r_2'a_1+a_2\times a_2'\\ r_2a_2'+r_1'a_2+a_1\times a_1'& r_2r_2'+T(a_2,a_1')   
\end{pmatrix},\]
where $T$ and $\times$ denote the trace and cross product of the Albert algebra, respectively, and the involution
\[\begin{pmatrix}
  r_1&a_1\\ a_2& r_2
\end{pmatrix}^*=\begin{pmatrix}
  r_2&a_1\\ a_2& r_1
\end{pmatrix}.\]

We say that $B$ is a \emph{reduced Brown algebra obtained from $A$}. 
If $A$ is the split Albert algebra $A^s_R=A^s_\Z\otimes_\Z R$, then $B$ is the \emph{split Brown algebra} $B^s_R$. By construction, $B^s_R=B^s_{\Z}\otimes_{\Z} R$. We refer to \cite{GPRbook} for the theory of Albert algebras over rings.

More generally, then, a \emph{Brown algebra} over $R$ is an $R$-algebra $B$ with involution $x\mapsto x^*$ whose underlying module is projective of constant rank 56, such that $B\otimes_R S\simeq B^s\otimes_RS$ for some faithfully flat $R$-ring $S$. The algebra $B$ is endowed with a \emph{$V$-operator}
\begin{equation}\label{EV}
    V_{x,y}z=(xy^*)z+(zy^*)x-(zx^*)y.
\end{equation}

Brown algebras are a special case of structurable algebras, defined in \cite{AH} as follows.

\begin{Def}\label{DStructurable} A \emph{structurable algebra} over $R$ is a unital $R$-algebra $B$ with involution such that the commutator relation
\[[V_{x,y},V_{z,w}]=V_{V_{x,y}z,w}-V_{z,V_{y,x}w}\]
holds for all $x,y,z,w\in B$, 
where the $V$-operator is defined by the formula \eqref{EV}.
\end{Def}

(In \cite{AH}, it is noted that the commutator relation need only be required with $y=1$, from which the general case follows.)

\begin{Rk}\label{Rbasechange} Brown algebras are stable under base change: if $B$ is a Brown algebra over $R$ and $R'$ is an $R$-ring, then $B_{R'}=B\otimes_R R'$ is a Brown algebra over $R'$. Indeed, $B_{R'}$ is an algebra with involution that is a projective $R'$-module of constant rank 56. Moreover, if $S$ is a faithfully flat $R$-ring such that $B\otimes_RS$ is isomorphic to the split Brown algebra $B^s_\Z\otimes_\Z S$, then $S'=S\otimes _RR'$ is a faithfully flat $R'$-ring and
    \[B_{R'}\otimes_{R'} S'\simeq (B\otimes_R S)\otimes_SS' \simeq B^s_\Z\otimes_\Z S'\]
    is split. Note that isomorphisms between structurable algebras, by definition, respect the involution.
\end{Rk}

We denote by $B^+$ (resp.\ $B^-$) the submodule of symmetric (resp.\ skew-symmetric) elements, i.e.\ those $x\in B$ with $x^*=x$ (resp.\ $x^*=-x$). Since $2\in R^*$, we have $B^+\cap B^-=\{0\}$, and $x=\frac12(x+x^*)+\frac12(x-x^*)$ for all $x\in B$. This shows that $B=B^+\oplus B^-$, so both summands are finitely generated projective. 

\begin{Rk}\label{Rprodskew}
    It was shown in \cite{Als2} that if $z,w\in B^-$, then $zw$ belongs to $R1_B$, which we identify with $R$.
\end{Rk}

 We will often require $B$ to have a skew element $j$ such that $j^2\in R^*1_B$. The following lemma indicates that this is a natural condition.

\begin{Lma}\label{Lprodskew} Let $j\in B^-$ with $j^2=\mu 1$. Then $\mu$ is invertible if and only if $j$ is conjugate invertible. In that case, the conjugate inverse of $j$ is $-\mu^{-1}j$.
\end{Lma}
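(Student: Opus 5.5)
The plan is to work directly with the definition of conjugate invertibility for structurable algebras, as in \cite{AH} and \cite{Als2}. An element $x\in B$ is conjugate invertible if there is $\hat x\in B$ with $V_{x,\hat x}=2\Id_B$; in that case $\hat x$ is unique and is the conjugate inverse. If \cite{Als2} normalizes this differently, I would adjust the scalar $2$ accordingly.

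For the implication ``$\mu\in R^*\Rightarrow j$ conjugate invertible'', I would compute $V_{j,y}$ for $y=-\mu^{-1}j$ using \eqref{EV}. Since $j^*=-j$ and $y^*=\mu^{-1}j$,
\[V_{j,y}z=\mu^{-1}\bigl(j^2z+(zj)j+(zj)j\bigr)=\mu^{-1}\bigl(\mu z+2(zj)j\bigr)\quad\text{(up to signs to be checked).}\]
So the key input is an identity for the operator $z\mapsto (zj)j$ when $j$ is skew, presumably $(zj)j=\tfrac12\mu z$ or $(zj)j=\mu z$ after combining with $j(jz)$ terms. I would obtain this identity in the split case and descend. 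Both sides are polynomial in $(j,z)$ and compatible with base change, by Remark~\ref{Rbasechange}. It therefore suffices to check it over a faithfully flat $S$ with $B_S\simeq B^s_S$, where $B^-$ is identified explicitly as the span of $s_0=\mathrm{diag}(1,-1)$ and the skew part of the off-diagonal entries. The identity is then a direct matrix computation with the Brown multiplication, using Remark~\ref{Rprodskew} to identify products of skews with scalars. Once the identity is established, $V_{j,-\mu^{-1}j}=2\Id$, which gives conjugate invertibility together with the formula $\hat j=-\mu^{-1}j$ by uniqueness.

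For the converse, I would argue by base change to residue fields. Conjugate invertibility is preserved under base change, since $V_{j,\hat j}=2\Id$ persists. If $\mu\notin R^*$, pick a maximal ideal $\mathfrak m\ni\mu$ and pass to $k=R/\mathfrak m$, where $j_k^2=0$. Over the field, with $j$ conjugate invertible, the identity above gives $V_{j,y}z=\mu y$-type relations. Alternatively, I would use the known characterization over fields, that conjugate invertibility of $x$ is equivalent to nonvanishing of the quartic norm. For skew $j$ this norm is a unit multiple of $\mu^2$, which I would verify in the split model with $j=\lambda s_0+(\text{skew off-diagonal part})$. A more elementary route is to show directly that if $j^2=0$ then $V_{j,w}$ has nontrivial kernel, for instance containing $j$ itself or some $jz$, for every $w$. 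Either way this contradicts $V_{j,\hat j}=2\Id$.

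I expect the main obstacle to be the precise skew-element identity for $(zj)j$ and its signs and constants. Getting these exactly right determines both the constant $-\mu^{-1}$ and the converse. I would first try to derive the identity abstractly from the structurable commutator relation with $y=1$ and Remark~\ref{Rprodskew}, and fall back on the explicit split computation if that fails.
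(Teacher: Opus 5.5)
Your forward direction contains a sign error, and the ``main obstacle'' you identify is an artifact of it. In the paper, conjugate invertibility means $V_{u,v}=\Id_B$, not $2\Id_B$. With $2\Id_B$ the last clause of the lemma would be false.

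With $y=-\mu^{-1}j$ one has $y^*=\mu^{-1}j$ and $j^*=-j$. The three terms of \eqref{EV} are therefore $(jy^*)z=\mu^{-1}(j^2)z=z$, then $(zy^*)j=\mu^{-1}(zj)j$, and finally $-(zj^*)y=-\mu^{-1}(zj)j$. The last two cancel. So $V_{j,-j}=\mu\,\Id_B$ for every $j\in B^-$ with $j^2=\mu1$, and hence $V_{j,-\mu^{-1}j}=\Id_B$ when $\mu\in R^*$.

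No identity for $(zj)j$ is needed, nor any descent or split computation. This one-line calculation is exactly the paper's proof of this direction. The identity you guessed, $(zj)j=\tfrac12\mu z$, is false (take $z=1$). A check in the split model shows $(zj)j=\mu z$, so with your signs you would have arrived at $3\,\Id_B$.

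For the converse you take a genuinely different route from the paper, and it can be completed. As written, however, it is a list of options rather than an argument. The split model is also misdescribed: the involution swaps the diagonal entries and fixes the off-diagonal ones, so the skew part of the split Brown algebra is just $R\,\diag(1,-1)$.

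That fact alone finishes your reduction. Suppose $\mu\in\mathfrak m$ and pass to an algebraic closure $\overline{k}$ of $R/\mathfrak m$, where $B$ splits. Then $j_{\overline{k}}=\lambda\,\diag(1,-1)$ with $\lambda^2=0$, so $j_{\overline{k}}=0$. But conjugate invertibility survives base change, and $V_{0,\widehat j}=0\neq\Id$, a contradiction.

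The paper instead stays over $R$. By \cite[Proposition 11.1]{AH}, $\widehat j\in B^-$, $\widehat j j=-1$, and $(\widehat j j)x=\widehat j(jx)$ for all $x$. Taking $x=j$ gives $\mu\widehat j=-j$. Multiplying by $\widehat j$ gives $\mu\widehat j^{\,2}=1$, where $\widehat j^{\,2}\in R$ by Remark \ref{Rprodskew}.

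The paper's argument is shorter and avoids base change, but relies on Allison--Hein's results on conjugate inverses of skew elements. Yours needs only base change and the shape of the split model; it is essentially the argument of Proposition \ref{Pconjinv}. Your quartic-norm variant would instead import a heavier field-level characterization of conjugate invertibility.
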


Recall that an element $u\in B$ is \emph{conjugate invertible} if there exists $v\in B$ such that $V_{u,v}=\Id_B$. In that case, we know from \cite{AH} that $v$ is unique; it is called the \emph{conjugate inverse} of $u$, denoted $v=\widehat u$, and satisfies $\widehat v=u$.

\begin{proof}
Direct computation shows that if $j\in B^-$, then $V_{j,-j}x=j^2x=\mu x$ for all $x\in B$. If $\mu$ is invertible, this implies that $j$ is conjugate invertible with conjugate inverse  $-\mu^{-1}j$. If, conversely, $j\in B^-$ is conjugate invertible, then by \cite[Proposition 11.1]{AH}, the conjugate inverse $\widehat j$ satisfies
\begin{equation}\label{Eprodskew}
    \widehat j \in B^-, \qquad j\widehat j=\widehat j j=-1, \qquad \text{and}\qquad \forall x\in B: (\widehat j j)x=\widehat j(jx).
\end{equation}
Thus multiplying the equality  $\widehat j j=-1$ from the right by $j$, one has $\widehat j\mu=-j$, and multiplying this by $\widehat j$ one obtains $(\widehat j)^2\mu=-\widehat j j =1$. Thus $\mu$ is invertible.
\end{proof}

It follows that if $j$ is conjugate invertible, then $j(jx)=j^2x=\mu x$ for all $x\in B$. Moreover, the following direct consequence of \cite[Proposition 11.3]{AH} will be useful.

\begin{Lma}\label{Lj} Let $B$ be a Brown algebra and assume $j\in B^-$ satisfies $j^2=\mu1_B$ with $\mu\in R^*$. Then for all $x,y,z\in B$,
\[V_{jx,jy}(jz)=-\mu jV_{x,y}z.\]
\end{Lma}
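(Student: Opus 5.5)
We plan to derive the identity from the result of \cite[Proposition 11.3]{AH} on conjugate invertible elements, specialized to $j$. By Lemma \ref{Lprodskew}, $j$ is conjugate invertible with $\widehat j=-\mu^{-1}j$, and \eqref{Eprodskew} holds. The relevant statement in \cite{AH} says that for a conjugate invertible element $u$ (in particular a skew one), the operator $L_u$ of left multiplication intertwines triple products in the form
\[
V_{ux,uy}(uz)=u\,V_{x,y}z \quad\text{up to a scalar depending on } u,
\]
or in a related form expressed via $\widehat u$, such as $V_{ux,uy}=L_uV_{x,y}L_{\widehat u}$ up to sign. Concretely, the first step is to recall that for skew $s$ the map $L_s$ satisfies a relation of the type $V_{sx,sy}=-L_sV_{x,y}L_{s}$ (Allison--Hein's ``conjugation by skew elements'' identity). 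We then evaluate this relation at $jz$.

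Granting $V_{jx,jy}=-L_jV_{x,y}L_j$ as operators, we get
\[
V_{jx,jy}(jz)=-j\bigl(V_{x,y}(j(jz))\bigr)=-j\bigl(V_{x,y}(\mu z)\bigr)=-\mu\, jV_{x,y}z,
\]
using $j(jz)=j^2z=\mu z$, which holds by the remark after Lemma \ref{Lprodskew} (this is the last condition in \eqref{Eprodskew} combined with $\widehat j=-\mu^{-1}j$). This is exactly the asserted formula.

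The main obstacle is pinning down the exact form and normalization of the identity in \cite[Proposition 11.3]{AH}, which may be stated with $\widehat j$ instead of $j$, or with $V_{x,y}$ on the other side. If it is stated as $V_{jx,jy}=L_jV_{x,y}L_{\widehat j}$ or similarly, we substitute $\widehat j=-\mu^{-1}j$ and use $j(jz)=\mu z$. The scalars then combine to $-\mu$, and we check the sign on one convenient instance, e.g.\ $x=y=1$. Should the cited proposition only give the identity when $\mu=1$ or up to isotopy, the fallback is to reduce to that case fppf-locally. Adjoining $\sqrt{\mu}$, i.e.\ passing to $S=R[t]/(t^2-\mu)$, which is faithfully flat since $\mu$ is a unit, we replace $j$ by $t^{-1}j$, whose square is $1$. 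Both sides are homogeneous of degree $3$ in $j$, so the identity for $t^{-1}j$ rescales to the one for $j$, and it descends to $R$ because $B\to B_S$ is injective.
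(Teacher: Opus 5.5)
Your proposal takes the same route as the paper. The paper gives no argument beyond calling the lemma a direct consequence of \cite[Proposition 11.3]{AH}, and your specialization (substituting $\widehat j=-\mu^{-1}j$ from Lemma \ref{Lprodskew} and using $j(jz)=\mu z$) is exactly the intended one. One small correction: the form that gives the lemma directly is the autotopy statement $jV_{x,y}z=V_{jx,\widehat j y}(jz)$, i.e.\ $\widehat{L_j}=L_{\widehat j}$, as in the paper's subsequent Example. Your guessed variant $V_{jx,jy}=L_jV_{x,y}L_{\widehat j}$ is off by a factor $\mu$: it yields $-jV_{x,y}z$, so there the scalars would not combine to $-\mu$.
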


\begin{Rk}
    In connection to this, we note that \cite{AH} work in the same general setting as we do. In particular, they do not assume that the underlying ring is a field. Therefore, Remark 4.4 and 4.6 from \cite{Als2} were proved over rings already in \cite{AH}. (In \cite{Als2}, it is claimed that \cite{AH} proved them over fields, with an argument that easily generalizes to rings. S.\ Alsaody wishes to apologize for this oversight.)
\end{Rk}

The existence of a conjugate invertible element of $B^-$ is in fact equivalent to $B^-$ being free of rank 1, as shown in the following.

\begin{Prp}\label{Pconjinv}
    If $B^-$ is free of rank 1, then any generator $j$ of $B^-$ is conjugate invertible. Conversely, if $B^-$ contains a conjugate invertible element $j$, then $B^-=Rj$ and hence free of rank 1.
\end{Prp}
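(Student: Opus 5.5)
Both directions reduce to Lemma \ref{Lprodskew} combined with the quadratic pairing on $B^-$ from Remark \ref{Rprodskew}: for $z,w\in B^-$ we have $zw\in R1_B$. Write $zw=\beta(z,w)1$. Since $(zw)^*=w^*z^*=wz$ and $1^*=1$, $\beta$ is a symmetric bilinear form on $B^-$.

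For the converse direction, let $j\in B^-$ be conjugate invertible. By Lemma \ref{Lprodskew}, $j^2=\mu1$ with $\mu\in R^*$. Given $z\in B^-$, set $\lambda=\beta(j,z)\in R$, so $jz=\lambda 1$. Using the identity $j(jx)=\mu x$ noted after Lemma \ref{Lprodskew} with $x=z$, we get
\[\mu z=j(jz)=j(\lambda1)=\lambda j.\]
Hence $z=\mu^{-1}\lambda j\in Rj$, and so $B^-=Rj$. Moreover $j$ is torsion-free in the sense that $rj=0$ implies $r\mu=rj^2=(rj)j=0$, hence $r=0$. So $B^-$ is free of rank 1 with basis $j$.

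For the first direction, let $j$ generate $B^-\cong R$. We write $j^2=\mu1$, and by Lemma \ref{Lprodskew} it suffices to show $\mu\in R^*$. Invertibility of an element of $R$ can be checked after a faithfully flat extension $S$ (indeed, locally at each prime, or after any fppf cover). Since $B^-$ is free on $j$, the module $B_S^-=(B^-)_S$ is free on $j\otimes1$; here we use that $*$ commutes with base change and that $2$ is invertible, so that $(B_S)^-=B^-\otimes S$. Choose $S$ splitting $B$, so $B_S\simeq B^s_S$, and transport $j$ to an element $j'$ generating $(B^s_S)^-$. In the split reduced model, the skew elements are exactly
\[\begin{pmatrix} r&0\\0&-r\end{pmatrix},\]
and $j_0=\begin{pmatrix}1&0\\0&-1\end{pmatrix}$ satisfies $j_0^2=1$. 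Any generator of the free rank-one module $S j_0$ is $j'=u j_0$ with $u\in S^*$, so $j'^2=u^2 1$ and $\mu\otimes1=u^2\in S^*$. By faithful flatness, $\mu\in R^*$. Note that the isomorphism $B_S\simeq B^s_S$ respects the involution, hence maps $B_S^-$ onto $(B^s_S)^-$.

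The main point requiring care is the descent step in the first direction. One must check that $\mu\otimes 1$ invertible in $S$ forces $\mu$ invertible in $R$; this holds because $R/\mu R\otimes S=0$ implies $R/\mu R=0$ by faithful flatness. The remaining computations in the split model are routine.
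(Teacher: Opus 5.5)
Your proof is correct. The converse direction is essentially the paper's argument: the paper writes $z=-(\widehat j j)z=-\widehat j(jz)$ and uses $\widehat j=-\mu^{-1}j$, which is the same computation as your $\mu z=j(jz)=\lambda j$. Injectivity of $r\mapsto rj$ is also obtained by multiplying by $j$ (or $\widehat j$), as you do.

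The first direction follows a different route. The paper argues by contradiction. If $\mu$ were not a unit, it would lie in a maximal ideal $M$. Over $K=R/M$ the line $(B_K)^-$ is spanned by $j\otimes 1$, so every skew element of the Brown algebra $B_K$ squares to zero, and the paper simply declares this ``a contradiction''. You instead pass directly to a faithfully flat splitting ring $S$. There $(B^s_S)^-=Sj_0$ with $j_0^2=1$, so the transported generator is $uj_0$ with $u\in S^*$. Hence the image of $\mu$ in $S$ is $u^2$, and invertibility descends via $R/\mu R\otimes_R S=0$.

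Your version avoids Zorn's lemma. More importantly, it makes explicit the fact the paper leaves unstated: a Brown algebra over a field has a skew element with nonzero square, which is verified precisely by the splitting computation you carry out. The paper's version is shorter because it takes that fact for granted. Both arguments use only that $j$ generates $B^-$, not that $B^-$ is free.

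A cosmetic point: calling the pairing on $B^-$ $\beta$ clashes with the form $\beta$ the paper introduces right afterwards, and you never use its symmetry.
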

\begin{proof}
    Assume that $B^-$ is free and generated by $j$ with $j^2=\mu1_B$. By the above lemma, it suffices to show that $\mu\in R^*$. Suppose $\mu$ is not invertible. Then, by Zorn's Lemma, $\mu$ is contained in some maximal ideal $M\unlhd R$. Since $B$ is a Brown algebra over the ring $R$, $B_K = B\otimes_{R} K$ is a Brown algebra over the field $K = R/M$ by Remark~\ref{Rbasechange}.  Now, $(B_K)^-=(B^-)\otimes_RK$. Since $j$ generates $B^-$ as an $R$-module, $j\otimes 1_K$ spans $(B_K)^-$. Thus any $x\in(B_K)^-$ satisfies $x=j\otimes \lambda$ for some $\lambda\in K$. But then 
    \[x^2=j^2\otimes\lambda^2=\mu\otimes\lambda^2=1_B\otimes\mu\lambda^2=0,\]
    since $\mu\in M$ and $K=R/M$, so in $B_K$, all skew-symmetric elements have square zero. This is a contradiction.

    For the converse, assume that $j\in B^-$ is conjugate invertible with conjugate inverse $\widehat j$, and let $z\in B^-$. Then by \eqref{Eprodskew}, 
    \[z=-(-1)z=-(\widehat j j)z=-\widehat j(jz).\]
    By Remark \ref{Rprodskew}, $jz=\lambda 1_B$ for some $\lambda\in R$, so $z=-\lambda\widehat j$, which by Lemma \ref{Lprodskew} equals $\lambda\mu^{-1}j$. Thus the $R$-module homomorphism $R\to B^-$ defined by $r\mapsto rj$ is surjective. By \eqref{Eprodskew}, it is invertible with inverse $rj\mapsto (-rj)\widehat j$. This completes the proof.
\end{proof}

For the rest of this section, we assume $B^-$ is free and fix a conjugate invertible element $j\in B^-$.  Following \cite{Gar}, we consider the antisymmetric bilinear form 
\[b_j:B\times B\to R, \qquad b_j(x,y)=(xy^*-yx^*)j,\]
and
 the trilinear map
\begin{equation}\label{Etri}
t_j:B\times B\times B\to B, \qquad t_j(x,y,z)=2V_{x,jy}z-b_j(y,z)x-b_j(y,x)z-b_j(x,z)y.\end{equation}

\begin{Rk}\label{RFTS}  From \cite[Section 2.4]{Als2}, we know that $(B,b_j,t_j)$ is a Freudenthal triple system, which we denote by $Q(B,j)$. We refer to \cite{GPR} and \cite{Als2} for the theory of Freudenthal triple systems over rings. When $6\in R^*$, such a triple system is the datum $(M,b,t)$ of an $R$-module $M$ with an alternating bilinear form $b$ and a trilinear map $t:M\times M\times M\to M$, satisfying certain properties. In this paper, we will only be interested in those of the form $Q(B,j)$. 
\end{Rk}

We also define the bilinear form
    \[\beta_j:B\times B\to R, \qquad \beta_j(x,y)=\mu^{-1}b_j(j x,y)=\mu^{-1}((j x)y^*+y(x^*j))j.\]

\begin{Rk}\label{Rbeta} Note that $\beta$ is independent of the choice of $j$. If $j'$ is another conjugate invertible element in $B^-$ with $j'^2=\mu'1_B$, then by Proposition \ref{Pconjinv}, $j'=\lambda j$ and $\mu'=\lambda^2\mu$ for some $\lambda\in R^*$. Then
\[\beta_{j'}(x,y)=\mu'^{-1}\lambda^2((j x)y^*+y(x^*j))j=\mu^{-1}((j x)y^*+y(x^*j))j=\beta_j(x,y).\]
We will therefore drop the subscript $j$. Note that this also implies that $\beta$ is invariant under isomorphism, since $B^-$ is preserved by isomorphisms.
\end{Rk}

\begin{Ex}\label{Ereduced}
Assume that $B$ is reduced, obtained from an Albert algebra $A$.
It comes with a (non-degenerate, antisymmetric) bilinear form $b:B\times B\to R$ defined by
\[b(x,x')=r_1r_2'-r_2r_1'+T(a_1,a_2')-T(a_2,a_1')\]
where
\[\begin{array}{lll}
x=\begin{pmatrix}
 r_1&a_1\\ a_2& r_2   
\end{pmatrix}
& \text{and} &
x'=\begin{pmatrix}
 r_1'&a_1'\\ a_2'& r_2'   
\end{pmatrix}
\end{array},\]
and $T:A\times A\to R$ is the (symmetric, bilinear) trace form of the Albert algebra $A$.

Here, $j=\diag(1,-1)$ satisfies $j^2=1_B$, and $b_j$ coincides with $b$ defined above. In this case, the form $\beta$ satisfies
\[\beta(x,y)=b(jx,y)=r_1r_2'+r_2r_1'+T(a_1,a_2')+T(a_2,a_1').\]
This form is symmetric. It is also regular: by \cite[I.7.1.3]{Knu}, it is enough to check this over any field $K\in\Ralg$, and then by finite dimension, it follows from $\beta$ being injective, which is clear.
\end{Ex}

The fact that $\beta$ is symmetric and regular holds in the general case as well.

\begin{Lma}\label{Lbeta} If $B$ is any Brown algebra containing a invertible skew element, then $\beta$ is a regular, symmetric bilinear form.
\end{Lma}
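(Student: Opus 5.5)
Both symmetry and regularity are properties of $\beta$ that can be checked after a faithfully flat base change, so the plan is to reduce to the reduced (in fact split) case treated in Example \ref{Ereduced}.

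First, take a faithfully flat $R$-ring $S$ with $B_S\simeq B^s_S$. By Remark \ref{Rbasechange} and the fact that $2\in R^*$, we have $(B_S)^-=B^-\otimes_RS$. Hence $j\otimes 1$ is a generator of $(B_S)^-$ satisfying $(j\otimes1)^2=\mu\otimes 1$ with $\mu\otimes 1\in S^*$. The form $\beta$ is defined by a polynomial formula in the algebra operations, the involution and $j$. It therefore commutes with base change: $\beta_{B_S}=\beta_B\otimes S$, computed with $j\otimes 1$.

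Next, by Remark \ref{Rbeta}, $\beta$ is independent of the choice of conjugate invertible skew element and is invariant under isomorphisms of algebras with involution. We may therefore transport $\beta_{B_S}$ along the isomorphism $B_S\simeq B^s_S$. There we may compute it with $j=\diag(1,-1)$, and Example \ref{Ereduced} shows that the resulting form is symmetric and regular. For the transport step, note that an isomorphism $\phi$ carries $j\otimes1$ to a conjugate invertible skew element $\phi(j\otimes 1)$ of $B^s_S$. Remark \ref{Rbeta} then gives $\beta(\phi x,\phi y)=\beta(x,y)$, with the right-hand side computed using either choice.

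Finally, we descend. Symmetry means that $\beta(x,y)-\beta(y,x)=0$ for all $x,y\in B$. This holds after tensoring with $S$, and $R\to S$ is injective by faithful flatness, so it holds over $R$. For regularity, $B$ is finitely generated projective, so the map $B\to B^\vee$, $x\mapsto\beta(x,-)$, base changes to the corresponding map for $B_S$, since $B^\vee\otimes S=(B_S)^\vee$. A module map is an isomorphism if it is one after faithfully flat base change, which gives regularity. Alternatively, one can invoke \cite[I.7.1.3]{Knu} as in Example \ref{Ereduced} and check regularity over residue fields, where $B$ becomes split after a field extension.

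The main obstacle I expect is the careful verification that $\beta$ commutes with base change and is truly isomorphism-invariant. This includes making sure that the scalar $\mu^{-1}$ and the identification of $(xy^*)j\in B^-\cdot B^-\subseteq R1_B$ (Remark \ref{Rprodskew}) behave correctly under extension of scalars. Everything else is formal descent.
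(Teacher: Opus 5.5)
Your proposal is correct and follows the paper's proof. You reduce by faithfully flat base change to the split (hence reduced) case using the isomorphism-invariance and $j$-independence of $\beta$ from Remark \ref{Rbeta}, invoke Example \ref{Ereduced}, and descend symmetry and regularity, differing only in spelling out the regularity descent via $B^\vee\otimes_R S\cong (B_S)^\vee$ where the paper simply cites \cite[I.7.1.3]{Knu}.
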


\begin{proof}The form $\beta$ is bilinear by construction. Symmetry of a bilinear form is invariant under faithfully flat descent, and so is regularity by \cite[I.7.1.3]{Knu}. Since $\beta$ is invariant under isomorphism, it thus suffices to check the reduced case, where both properties were established in Example \ref{Ereduced}.
\end{proof}

\begin{Lma}\label{Lbetaidentity} The bilinear form $\beta$ satisfies
\[\beta(V_{z,w}x,y)=\beta(V_{x,y}z,w)=\beta(V_{y,x}w,z).\]
\end{Lma}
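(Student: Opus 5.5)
The plan is to reduce the identity to a statement about the Freudenthal triple system $Q(B,j)$, using $\beta(x,y)=\mu^{-1}b_j(jx,y)$, and then exploit the known symmetry of $b(x,t(y,z,w))$.

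\textbf{Reduction to a split algebra.} All three expressions are $R$-multilinear in $(x,y,z,w)$. Both $\beta$ and $V$ are compatible with base change and invariant under isomorphism (Remark \ref{Rbeta}). An equality of elements of $R$ can be checked after a faithfully flat extension. So we may pass to $S$ with $B_S\simeq B^s_S$. There we may take $j=\diag(1,-1)$, $\mu=1$, and $\beta(x,y)=b(jx,y)$ as in Example \ref{Ereduced}. One could then verify the identities by brute force in the matrix model, but I would rather argue structurally.

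\textbf{Structural argument.} From \eqref{Etri},
\[2V_{x,jy}z=t_j(x,y,z)+b_j(y,z)x+b_j(y,x)z+b_j(x,z)y.\]
In a Freudenthal triple system, $t$ is symmetric and $b(t(x,y,z),w)$ is symmetric in all four variables (see \cite{GPR}, \cite{Als2}). Replacing $y$ by $-\mu^{-1}jy$ (using $j(jy)=\mu y$) gives
\[2V_{x,y}z=-\mu^{-1}\bigl(t(x,jy,z)+b(jy,z)x+b(jy,x)z+b(x,z)jy\bigr).\]
Now pair with $w$ through $\beta(\cdot,w)=\mu^{-1}b(j\,\cdot,w)$. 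This requires moving $j$ across $b$ and $t$, namely an identity of the form
\[b(ju,v)=b(jv,u),\]
which is the symmetry of $\beta$ from Lemma \ref{Lbeta}. It also requires a compatibility $t(jx,y,z)$ versus $j\,t(x,y,z)$, which I would extract from Lemma \ref{Lj}: $V_{jx,jy}(jz)=-\mu jV_{x,y}z$.

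\textbf{Bookkeeping.} After these substitutions, $\beta(V_{x,y}z,w)$ becomes a sum of two kinds of term:
\begin{itemize}
\item a quartic term $\mu^{-2}b(t(x,jy,z),jw)$, up to sign, which is symmetric under $x\leftrightarrow z$ and $y\leftrightarrow w$ by the total symmetry of the quartic form;
\item products of two bilinear forms, such as $\beta(x,w)\beta(y,z)$-type terms, which one checks are permuted among themselves.
\end{itemize}
The first equality, $\beta(V_{z,w}x,y)=\beta(V_{x,y}z,w)$, amounts to the swap $(x,y)\leftrightarrow(z,w)$. The second amounts to the swap $x\leftrightarrow y$, $z\leftrightarrow w$ with the roles of the arguments reversed.

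\textbf{Main obstacle and fallback.} The hard step is handling $j$ inside $t$: the quartic term involves $j$ in two slots, and showing it is invariant under the needed permutations requires the precise interaction of $j$ with $t$. If the Lemma \ref{Lj} route gets messy, I will fall back on the split reduced model. There I will use the explicit formula for $V$ in terms of $T$ and $\times$ in the Albert algebra, together with the symmetry of $T(a\times b,c)$, and check the identity componentwise.
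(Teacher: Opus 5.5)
Your route can be completed, but as written it stops at the step that matters, and the identity you propose for that step is the wrong one. A minor point first: substituting $-\mu^{-1}jy$ for $y$ in \eqref{Etri} turns the left side into $2V_{x,-y}z$, so the prefactor should be $+\mu^{-1}$. This is harmless.

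Write $q(a,b,c,d)=b(a,t(b,c,d))$ and use the symmetry of $\beta$ to put $j$ on $w$. Your expansion then becomes
\[2\mu^2\beta(V_{x,y}z,w)=q(jw,x,jy,z)+\mu^2\beta(y,z)\beta(x,w)+\mu^2\beta(y,x)\beta(z,w)+\mu\, b(x,z)b(y,w).\]
The three products of bilinear forms are indeed permuted by both swaps. The term $q(jw,x,jy,z)$ is invariant under $x\leftrightarrow z$, $y\leftrightarrow w$, so the first equality is fine.

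For the second equality you need $q(jz,y,jx,w)=q(jw,x,jy,z)$. The two copies of $j$ must move from the slots of $w,y$ to those of $z,x$, and total symmetry of $q$ does not give this. A one-slot relation does not help either: there is no identity $t(jx,y,z)=c\,j\,t(x,y,z)$ with a scalar $c$. In the split model, $x\mapsto jx$ is $+1$ on the $(r_1,a_1)$-coordinates and $-1$ on the $(a_2,r_2)$-coordinates, so a single $j$ in one slot gives component-dependent signs.

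What does hold is the three-slot statement that $L_j$ is a similitude of $Q(B,j)$ with multiplier $-\mu$:
\[b(ja,jb)=-\mu\, b(a,b),\qquad t(ja,jb,jc)=-\mu\, j\,t(a,b,c).\]
The first follows from the symmetry of $\beta$:
\[b(ja,jb)=\mu\beta(a,jb)=\mu\beta(jb,a)=b(j(jb),a)=-\mu\, b(a,b).\]
The second follows by inserting $ja,jb,jc$ into \eqref{Etri} and using Lemma~\ref{Lj} in the form $V_{ja,j(jb)}(jc)=-\mu\, jV_{a,jb}c$. Together these give $q(ja,jb,jc,jd)=\mu^2q(a,b,c,d)$, and then
\[\mu^2q(jz,y,jx,w)=q(\mu z,jy,\mu x,jw)=\mu^2q(z,jy,x,jw)=\mu^2q(jw,x,jy,z).\]
This closes the gap.

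The paper avoids this detour by ordering the steps differently. It first proves
\[b_j(y,V_{z,jw}x)=b_j(w,V_{x,jy}z)=b_j(z,V_{y,jx}w).\]
Expanding via \eqref{Etri}, the quartic term is $b(w,t(x,y,z))$, with no $j$ inside $t$, so total symmetry and antisymmetry of $b$ suffice. Only then does it substitute $y\mapsto jy$ and $w\mapsto jw$. For the second equality it applies Lemma~\ref{Lj} directly to the $V$-operator, $V_{jy,jx}(jw)=-\mu\, jV_{y,x}w$.

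So the paper needs Lemma~\ref{Lj} only at the level of $V$. Your ordering forces you to push $j$ through $t$ and $b$, and hence to establish the similitude property of $L_j$ first. Your brute-force fallback in the split model would also work, but it is not needed.
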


\begin{proof} It is enough to prove the equalities after a faithfully flat base change. Thus we may assume that $B$ is split and, by Remark \ref{Rbeta}, that $j=\diag(1,-1)$. First we prove the auxiliary equalities
\[b_j(y,V_{z,jw}x)=b_j(w,V_{x,jy}z)=b_j(z,V_{y,jx}w).\]
Indeed, using \eqref{Etri}, the middle expression equals
\[b(w,t_j(x,y,z))+b_j(y,z)b_j(w,x)+b_j(y,x)b_j(w,z)+b_j(x,z)b_j(w,y).\]
From this, the left-hand side is obtained via the swaps $x\leftrightarrow z$ and $y\leftrightarrow w$, and the right-hand side is obtained via the swaps $x\leftrightarrow y$ and $z\leftrightarrow w$. By \cite[Remark 2.7]{Als2}, the first term is symmetric, hence invariant under each of these pairs of swaps. The three remaining terms are also unchanged, as $b$ is antisymmetric. This proves the auxiliary equalities.

The left-most equality in the lemma is obtained from the left-most auxiliary equality by replacing $y$ by $jy$ and $w$ by $jw$, and using the symmetry of $\beta$. To obtain the right-most equality, we first replace $w$ by $jw$ and $y$ by $jy$ in the right-most auxiliary equality, which gives
\[b_j(jw,V_{x,y}z)=b_j(z,V_{jy,jx}jw).\]
The left-hand side is precisely $\beta(V_{x,y}z,w)$, and by Lemma \ref{Lj}, the right-hand side equals
\[b_j(z,-jV_{y,x}w)=b_j(jV_{y,x}w,z)=\beta(V_{y,x}w,z),\]
which proves the right-most equality of the lemma.
\end{proof}

\section{TKK-algebras}
In this section, we will introduce the main class of algebras we consider in this paper. This is a class of $\Z$-graded algebras over $R$. We therefore begin with some necessary generalities on graded algebras.

\subsection{Preliminaries on $\Z$-graded algebras}
An $R$-algebra $L$ is \emph{$\Z$-graded} if $L$ decomposes as a direct sum of $R$-submodules $L=\bigoplus_{i\in \Z} L_i$, such that $L_iL_j\subseteq L_{i+j}$ for all $i,j\in\Z$. This property is preserved by arbitrary base change: the scalar extension $L_S=L\otimes_RS$ is $\Z$-graded for any $R$-ring $S$. Namely, the grading on $L$ induces a grading on $L_S$ by setting $(L_S)_i=(L_i)_S$, since $\bigoplus_i (L_i\otimes S)=L\otimes S$ by the distributivity of the tensor product over arbitrary direct sums.

 We will use two ways to characterize the graded components. For the first, we follow \cite{Rig} and \cite{Sta} in considering the \emph{grading cocharacter}  $\chi$ associated to the grading. If $L=\bigoplus_{i\in\Z} L_i$ is a $\Z$-graded algebra, this is the $R$-group homomorphism
\[\chi:\Gm\to\bAut(L)\]
where for each $R$-ring $S$ and $s\in \Gm(S)=S^*$, the automorphism $\chi_S(s)$ of $L_S$ is defined by $x\mapsto s^ix$ for each $x\in (L_S)_i$. 

If the graded components are finitely generated projective, the grading cocharacter determines the grading in the following way.

\begin{Lma}\label{Lcharacter} Let $L=\bigoplus_i L_i$ be a $\Z$-graded $R$-algebra, where each $L_i$ is finitely generated projective. Let $\chi:\Gm\to\bAut(L)$ be the grading cocharacter and set
\[L_i^\chi=\{x\in L\mid \forall S\in\Ralg, \forall s\in S^*: \chi_S(s)(x\otimes 1_S)=s^i (x\otimes 1_S)\}.\]
Then $L_i^\chi=L_i$ for each $i\in\Z$.
\end{Lma}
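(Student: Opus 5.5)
The plan is to prove the two inclusions $L_i\subseteq L_i^\chi$ and $L_i^\chi\subseteq L_i$ separately.

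\emph{The inclusion $L_i\subseteq L_i^\chi$.} This is immediate from the definition of $\chi$. If $x\in L_i$, then for every $R$-ring $S$ we have $x\otimes 1_S\in (L_i)_S=(L_S)_i$. Hence $\chi_S(s)(x\otimes 1_S)=s^i(x\otimes 1_S)$ for all $s\in S^*$.

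\emph{The inclusion $L_i^\chi\subseteq L_i$.} Take $x\in L_i^\chi$ and write $x=\sum_k x_k$ with $x_k\in L_k$. Only finitely many $x_k$ are nonzero, say those with $k$ in a finite set $F\subset\Z$. We test the defining condition on the universal point: let $S=R[t,t^{-1}]$ and $s=t\in S^*$. Then
\[\chi_S(t)(x\otimes 1)=\sum_{k\in F} x_k\otimes t^k ,\]
while the hypothesis gives
\[\chi_S(t)(x\otimes 1)=x\otimes t^i=\sum_{k\in F}x_k\otimes t^i .\]

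Now use that $L\otimes_R R[t,t^{-1}]=\bigoplus_{n\in\Z} L\otimes t^n$ as $R$-modules, where the map $L\to L\otimes t^n$, $y\mapsto y\otimes t^n$, is injective because $R[t,t^{-1}]$ is free over $R$ on the monomials. Comparing the components in each degree $n$ gives $x_k=0$ for all $k\neq i$, and hence $x=x_i\in L_i$.

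The only point needing any care is this last comparison of coefficients, i.e.\ the identification $L\otimes R[t,t^{-1}]\cong\bigoplus_n L t^n$. It holds because tensor products commute with direct sums. Note that the argument never uses the finitely generated projective hypothesis. That hypothesis is presumably included for later use, for example so that the $L_i^\chi$ behave well under base change, and it does no harm here.
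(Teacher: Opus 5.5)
Your proof is correct, and it finishes differently from the paper's. Both arguments test the defining condition at the universal point $S=R[t,t^{-1}]$, $s=t$, and both reduce to the relation $(t^i-t^j)(x_j)_S=0$. In your version this relation appears as $\sum_k x_k\otimes t^k=\sum_k x_k\otimes t^i$.

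The paper then argues indirectly. Since $(L_j)_S$ is projective, hence torsion free, a nonzero $(x_j)_S$ would force $t^i-t^j$ to be a zero divisor in $R[t,t^{-1}]$. After clearing denominators, this contradicts McCoy's theorem, because the coefficients of $t^{m+i}-t^{m+j}$ are units.

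You instead read off coefficients directly in $L\otimes_R R[t,t^{-1}]=\bigoplus_n L\otimes t^n$. For $k\neq i$, the element $x_k\otimes t^k$ is the only contribution in $t$-degree $k$, so it vanishes. Injectivity of $y\mapsto y\otimes t^k$ then gives $x_k=0$.

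Your route is shorter and avoids both McCoy's theorem and the torsion-freeness of projective modules. As you observe, it also shows that the hypothesis that the $L_i$ be finitely generated projective is unnecessary for this lemma. Note, though, that the hypothesis is not merely kept for later use: the paper's own proof does use it, precisely in the torsion-freeness step.
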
 

To alleviate the notation, we use the shorthand $x_S:=x\otimes 1_S$.

\begin{proof}
    If $x\in L_i$, then $x_S\in (L_S)_i$, so that $\chi_S(s)(x_S)=s^i(x_S)$ for all $s\in S^*$, where $S$ is any $R$-ring. Thus $L_i\subseteq L_i^\chi$. To show that the inclusion is an equality, take $x\in L_i^\chi$. It can be written uniquely as $x=\Sigma_j x_j$ with $x_j\in L_j$, and our goal is to prove that $x_j=0$ whenever $j\neq i$. If $S$ is an $R$-ring and $s\in S^*$, then on the one hand
\[\chi_S(s)(x_S)=s^i x_S=\textstyle\sum_j s^i (x_j)_S\]
while on the other
\[\chi_S(s)(x_S)=\textstyle\sum_j \chi_S(s)(x_j)_S=\sum_j s^j (x_j)_S,\]
so that $(s^i-s^j)(x_j)_S=0$ in $(L_S)_j$ for all $j$. In particular, this holds with $S=R[t,t^{-1}]$, the Laurent polynomial ring over $R$, and $s=t$. If $i\neq j$ and $x_j\neq 0$, then $(x_j)_S\neq 0$ since $S$ is free as an $R$-module. Since $(L_j)_S$ is projective, hence torsion free, this implies that $t^i-t^j$ is a zero divisor. Thus $(t^i-t^j)p(t)=0$ for some $p(t)\in R[t,t^{-1}]$. Multiplying by $t^{m+n}$ with $m, n$ such that $m+i$ and $m+j$ are non-negative and $t^np(t)\in R[t]$, we see that $t^{m+i}-t^{m+j}$ is a zero divisor in the polynomial ring $R[t]$. But this is impossible by McCoy's theorem, since the coefficients are units. This concludes the proof.
\end{proof}

A linearized version of the above lemma makes use of certain derivations. Following the terminology of  \cite{Sta}, if $L=\bigoplus_{i\in\Z} L_i$ is a $\Z$-graded algebra, the associated \emph{grading derivation} $D:L\to L$ is the $R$-linear map defined by $D(x)=ix$ for each $x\in L_i$. It is a derivation as it satisfies the Leibniz rule, and it determines the grading in the following case (which will be the case of interest for us).

\begin{Lma}\label{Lderivation} Let $L=\bigoplus_i L_i$ be a $\Z$-graded $R$-algebra, such that $L_i=0$ whenever $|i|>2$. Let $D:L\to L$ be the grading derivation and set
\[L_i^D=\{x\in L\mid D(x)=ix\}.\]
Then $L_i^D=L_i$ for each $i\in\Z$.
\end{Lma}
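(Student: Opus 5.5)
The plan mirrors the proof of Lemma \ref{Lcharacter}, with the Laurent polynomial argument replaced by the invertibility of small integers, which is available since $6\in R^*$.

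First, the inclusion $L_i\subseteq L_i^D$ is immediate from the definition of $D$. For the converse, take $x\in L_i^D$ and decompose it uniquely as $x=\sum_{j=-2}^{2}x_j$ with $x_j\in L_j$; this is a finite sum because $L_j=0$ for $|j|>2$. Applying $D$ and comparing with $ix=\sum_j ix_j$ gives $\sum_j jx_j=\sum_j ix_j$. Uniqueness of the decomposition in the direct sum then yields $(j-i)x_j=0$ for every $j\in\{-2,\dots,2\}$.

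Next, it remains to see that $j-i$ acts invertibly whenever $j\neq i$ and $x_j\neq0$. If $|i|>2$, then $L_i=0$, so we must show $x=0$. This is the one place where the hypothesis on the grading is not enough by itself, since $|j-i|$ could be a large integer, possibly not invertible in $R$. I expect this to be the main (minor) obstacle. One option is to read the statement as intended for $|i|\le 2$, with the cases $|i|>2$ handled separately. Another is to note that if $(j-i)x_j=0$ for all $j$, then $D$ acts on $x$ both as $i$ and as $\sum_j jx_j$; to get a contradiction one would need an integer prime to $6$ to be invertible, which is not assumed. So I would restrict attention to $|i|\le 2$, where $L_i^D$ is used, and treat the remaining $i$ as follows.

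For $|i|\le 2$ and $|j|\le 2$ with $j\ne i$, the difference $j-i$ lies in $\{\pm1,\pm2,\pm3,\pm4\}$. Each of these is a product of powers of $2$ and $3$ up to sign, hence a unit in $R$ because $6\in R^*$. Therefore $x_j=0$ for all $j\neq i$, and so $x=x_i\in L_i$.

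For $|i|>2$, I would remark that the same argument gives $x_j=0$ whenever $j-i$ is invertible. In the intended application it suffices to know $L^D_i=0$ in those degrees, and one would either add that hypothesis or note that the difference $j-i\in\{i\pm1,i\pm2, i\}$ need not be a unit. I would therefore phrase the lemma's proof for $|i|\le 2$ and point out that this is where $6\in R^*$ enters.
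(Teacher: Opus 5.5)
Your argument is the same as the paper's. You decompose $x$ into homogeneous components, use uniqueness of the decomposition to get $(j-i)x_j=0$, and cancel $j-i$ because $1,2,3,4$ are units when $6\in R^*$.

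Your hesitation about $|i|>2$ is justified, and it is not just a weakness of the method: the statement is false in that range in general. For instance, over $R=\mathbb{F}_5$ (where $6=1$ is a unit), the grading derivation of any TKK-algebra acts on $L_{-2}\neq 0$ as multiplication by $-2=3$. Hence $L_{-2}\subseteq L_3^D$, while $L_3=0$. More generally, in characteristic $p\geq 5$ one has $L_j\subseteq L^D_{j+p}$.

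The paper's proof has the same scope as yours. Its justification ``$j-i\in R^*$ whenever $0\neq|j-i|\le 2$'' should read $1\le|j-i|\le 4$, and it tacitly treats only $|i|\le 2$. That case is all the applications need: Remark \ref{Rderivation}, and the construction of the $e$-grading, where the grading is $\bigoplus_{|i|\le 2}L_i^D$. In the latter, the aside that $L_i^D=0$ for $|i|>2$ is dispensable and, by the example above, not true in general.

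Restricting the lemma to $|i|\le 2$, as you propose, is therefore the correct formulation.
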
 

\begin{proof}
By definition, $L_i\subseteq L_i^D$ for all $i$. Conversely, take $x\in L_i^D$ and write $x=\Sigma_j x_j$ with $x_j\in L_j$. Then
\[\textstyle\sum_j ix_j=ix= D(x)=\sum_jD(x_j)=\sum_j jx_j,\]
so that $(j-i)x_j=0$ for all $j$. By assumption, $x_j=0$ if $|j|>2$, and since $6\in R^*$, we have $j-i\in R^*$ whenever $0\neq |j-i|\leq 2$. Thus $x_j=0$ unless $i=j$, so $x\in L_i$, which completes the proof.
\end{proof}

\begin{Rk}\label{Rderivation} We will mainly be dealing with the situation where $L$ is a Lie algebra and $D=\ad_z$ is an inner derivation. Such an element $z$ will be called a \emph{grading element}. (In \cite{Sta}, the element itself is also called a grading derivation.) If $\phi\in\bAut(L)(R)$ is an automorphism of $L$ fixing $z$, then $\phi$ is a graded automorphism of $L$: indeed, if $x\in L_i$, then
\[D\phi(x)=[z,\phi(x)]=[\phi(z),\phi(x)]=\phi D(x)=\phi(ix)=i\phi(x),\]
so that $\phi(x)\in L_i^D$, which is equal to $L_i$ by the above lemma.    
\end{Rk}

\subsection{The TKK-construction}

Let $B$ be a structurable algebra over $R$. Following \cite{AllTKK} (see also \cite{Rig}), we will define a $\Z$-graded algebra $L=L(B)$ as follows. Writing $L_i$ for the homogeneous component of degree $i\in\Z$, we let, as $R$-modules, 
\begin{itemize}
    \item $L_i=0$ for $i<-2$ and $i>2$,
    \item each of $L_{\pm2}$ be a copy of $B^-$, 
    \item each of $L_{\pm1}$ be a copy of $B$, and
    \item $L_0$ be the \emph{inner structure Lie algebra} $\mathfrak{instrl}(B)$, i.e.\ the $R$-linear span of all $V$-operators on $B$, which is closed under the commutator in view of the identity
\[[V_{x,y},V_{z,w}]=V_{V_{x,y}z,w}-V_{z,V_{y,x}w}\]
    from Definition \ref{DStructurable}. Note that since $\lambda V_{x,y}=V_{\lambda x,y}$ for all $\lambda\in R$, each element in $L_0$ is a sum of $V$-operators.
\end{itemize}

The Lie bracket is the unique antisymmetric, bilinear map $[,]:L\times L\to L$ extending the Lie commutator on $L_0$ by

\begin{tabular}{ll}
$L_0\times L_1\to L_1$, & $[V_{x,y},z_+]=V_{x,y}(z_+)$\\
    $L_0\times L_{-1}\to L_{-1}$, & $[V_{x,y},z_-]=-V_{y,x}(z_-)$\\
     $L_0\times L_2\to L_2$, &  $[V_{x,y},s_+]=(s_+y)x^*+x(y^*s_+)$\\
     $L_0\times L_{-2}\to L_{-2}$, &$[V_{x,y},s_-]=-(s_-x)y^*-y(x^*s_-)$\\ 
     $L_1\times L_1\to L_2$, & $[x_+,y_+]=x_+y_+^*-y_+x_+^*$\\    
     $L_{-1}\times L_{-1}\to L_{-2}$, &$[x_-,y_-]=x_-y_-^*-y_-x_-^*$\\ 
     $L_1\times L_{-1}\to L_0$, &$[x_+,y_-]=V_{x_+,y_-}$\\
    $L_2\times L_{-2}\to L_0$, &$[s_+,t_-]=V_{s_+t_-,1}$\\
     $L_2\times L_{-1}\to L_1$, & $[s_+,x_-]=s_+x_-$\\
     $L_{-2}\times L_{1}\to L_{-1}$, &$[s_-,x_+]=s_-x_+$\\
\end{tabular}

\vspace{1ex}

We will mostly be interested in the case where $B$ is a Brown algebra. 

\begin{Rk}
If $B$ is a Brown algebra such that $B^-$ contains $j$ with $j^2=\mu\in R^*$, the multiplications into $L_{\pm2}$ can be written more simply as

\begin{tabular}{lll}
     $L_0\times L_{\pm2}\to L_{\pm2}$, &  $[\Sigma_iV_{x_i,y_i},sj]=\pm s\Sigma_i\beta(x_i,y_i)j$&\text{and}\\ 
     $L_{\pm1}\times L_{\pm1}\to L_{\pm2}$, & $[x_\pm,y_\pm]=\mu^{-1}b(x_\pm,y_\pm)j$\\
\end{tabular}

\end{Rk}
 \begin{Rk}\label{Rbasechange2} By construction, any isomorphism $B\overset{\sim}{\longrightarrow}B'$ induces a graded isomorphism $L(B)\overset{\sim}{\longrightarrow} L(B')$. Moreover, the construction is compatible with base change in the sense that $L(B\otimes_RS)=L(B)\otimes_R S$ for any $S\in\Ralg$. 
\end{Rk}

When $B$ is the split Brown algebra over $R$, we call $L(B)$ the \emph{split TKK-algebra of Brown type $\E_8$}, and denote it by $L^s_R$. For any Brown algebra $B$, we say that $L(B)$ is \emph{obtained from $B$ by the TKK-construction}. Before defining the general notion of a TKK-algebra of Brown type $\E_8$, we show that $L(B)$ is a Lie algebra of type $\E_8$. 

To begin with, the following lemmata show that the underlying module is finitely generated projective. Since $B$ and $B^-$ enjoy this property, what is left to check is that the inner structure algebra $\mathfrak{instrl}(B) =\langle V_{x,y} \mid  x,y \in B\rangle$ is projective as an $R$-module, and has constant rank.  If $A$ is an Albert algebra and $a,b\in A$, we recall the definition of the $V$-operator $V^A_{a,b}\in \End(A)$ by $V^A_{a,b}(c) = T(a,b)c+T(a,c) - (c\times a)\times b$. (We use the superscript $A$ to distinguish it from the $V$-operator on $B$.)
\begin{Lma}\label{lem:Vop}
    Let $B$ be a reduced Brown algebra over $R$, obtained from an Albert algebra $A$. Let 
    \[x,y \in \begin{pmatrix}
        R & 0 \\
        0 & 0
    \end{pmatrix}\cup \begin{pmatrix}
        0 & A \\
        0 & 0
    \end{pmatrix}\cup \begin{pmatrix}
        0 & 0 \\
        A & 0
    \end{pmatrix}\cup \begin{pmatrix}
        0 & 0 \\
        0 & R
    \end{pmatrix}.\]
    Then $V_{x,y}\in V_{B,1}$ unless $\{x,y\} = \left\{\tbtmat{0}{a}{0}{0},\tbtmat{0}{0}{b}{0}\right\}$ for $a,b\in A$. More precisely, in those cases we have
    \[V_{\tbtmat{0}{a}{0}{0},\tbtmat{0}{0}{b}{0}} \begin{pmatrix}
			t & c \\ d & u
		\end{pmatrix} = \begin{pmatrix}
		0 & V^A_{a,b} c \\ T(a,b)d - V^A_{b,a} d & T(a,b) u
	\end{pmatrix},\]
    and
    \[V_{\tbtmat{0}{0}{b}{0},\tbtmat{0}{a}{0}{0}} \begin{pmatrix}
			t & c \\ d & u 
		\end{pmatrix} = \begin{pmatrix}
		T(a,b) t & T(a,b)c - V^A_{a,b} c \\ V^A_{b,a}d & 0
	\end{pmatrix}.\]
\end{Lma}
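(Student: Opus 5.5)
Since the statement is entirely about explicit operators on the reduced Brown algebra, I will prove it by direct computation with the defining formula \eqref{EV}, $V_{x,y}z=(xy^*)z+(zy^*)x-(zx^*)y$, together with the multiplication and involution on $B$. Write $e_1=\tbtmat{1}{0}{0}{0}$, $e_2=\tbtmat{0}{0}{0}{1}$, $\alpha(a)=\tbtmat{0}{a}{0}{0}$ and $\beta'(b)=\tbtmat{0}{0}{b}{0}$. The involution swaps $e_1$ and $e_2$ and fixes $\alpha(a)$ and $\beta'(b)$. By bilinearity of $V$ and since $R e_i$ is spanned by $e_i$, it suffices to treat pairs $(x,y)$ drawn from $\{e_1,e_2,\alpha(a),\beta'(b)\}$. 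That gives sixteen ordered type-combinations.

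For the first claim, I use that $V_{z,1}w=zw+wz-wz^*$ for the generic element, since $1^*=1$ and $V_{z,1}w=(z)w+(w)z-(wz^*)$. So $V_{B,1}$ consists of operators of the form $w\mapsto zw+wz-wz^*$. For each non-exceptional pair I will compute $V_{x,y}$ on a general $\tbtmat{t}{c}{d}{u}$ and then exhibit $z$ with $V_{x,y}=V_{z,1}$. The natural candidate is $z$ a suitable combination of $xy^*$ and $yx^*$. I expect in fact $z=xy^*$ up to adjusting by symmetric/skew parts. The reason is the known identity $V_{x,y}=V_{xy^*,1}+{}$(a term that vanishes when $x,y$ "associate"), which is classical for structurable algebras (cf.\ \cite{AH}). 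In practice I will just verify it case by case.

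For example, $e_ie_j^*$ products are diagonal, and $\alpha(a)e_j^*$ and $e_i\alpha(a)^*$ lie in one corner. The products $\alpha(a)\alpha(a')^*=\alpha(a)\alpha(a')$ land in the $\beta'$ corner via $\times$. The $\alpha,\beta'$ product is $T(a,b)e_1$, and the difficult cases are exactly where this trace/cross-product interaction produces the Albert $V$-operator, which is not of the form $V_{z,1}$.

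For the two exceptional cases, I compute directly. For $V_{\alpha(a),\beta'(b)}$ applied to $w=\tbtmat{t}{c}{d}{u}$, the three terms are as follows. First, $(\alpha(a)\beta'(b))w=(T(a,b)e_1)w$. Second, $(w\beta'(b))\alpha(a)$, where $w\beta'(b)=\tbtmat{T(c,b)}{d\times b}{tb}{0}$ up to conventions. Third, $-(w\alpha(a))\beta'(b)$. Expanding these with the multiplication table and collecting entries, the $(1,1)$ entries cancel and the $(2,2)$ entry is $T(a,b)u$. The off-diagonal entries combine into $T(a,b)c+T(b,c)a-(c\times b)\times a$ type expressions, which must be matched to $V^A_{a,b}c$ and $T(a,b)d-V^A_{b,a}d$. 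The second formula follows from the first by symmetry. Alternatively, it follows from the identity $V_{x,y}+V_{y,x}$ being related to $V_{B,1}$, but direct computation is equally short.

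I expect the main obstacle to be sign and ordering bookkeeping. In particular, I must match $(c\times a)\times b$ against the expressions produced. This requires the Albert algebra identity $(a\times b)\times c$ versus the form of $V^A$, and possibly the linearized adjoint identities from \cite{GPRbook} such as $T(a\times b,c)$ symmetry. I will handle this by reducing all occurrences to the $V^A$ definition and using commutativity of $\times$ and symmetry of $T(a\times b,c)$.
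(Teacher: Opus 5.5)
Your plan is correct and is essentially the paper's own approach. The paper simply defers this same direct computation to \cite[Lemma~3.2.3]{Sme} (phrased for cubic norm pairs), while you carry it out in $B$ itself. Doing so confirms that your candidate $z=xy^*$ works with no adjustment in all fourteen non-exceptional cases. The second formula follows from the first via the identity $V_{x,y}+V_{y,x}=V_{xy^*+yx^*,1}$, which is immediate from \eqref{EV}, since here $xy^*+yx^*=T(a,b)1$.

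Two small slips to fix when writing it out. First, the $(2,1)$ entry of $w\tbtmat{0}{0}{b}{0}$ is $ub$, not $tb$. Second, the $(1,2)$ entry of $V_{\tbtmat{0}{a}{0}{0},\tbtmat{0}{0}{b}{0}}w$ is $T(a,b)c+T(b,c)a-(c\times a)\times b$, which is exactly $V^A_{a,b}c$, so no adjoint identities are needed. (The term $T(a,c)$ in the paper's definition of $V^A$ should read $T(b,c)a$.)
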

\begin{proof}
    The computations for this proof have been done for \cite[Lemma~3.2.3]{Sme} in the context of cubic norm pairs.
\end{proof}
\begin{Lma}\label{Lproj}
Let $B$ be the split Brown algebra over $R$, obtained from the split Albert algebra $A$. Then
\begin{enumerate}[(i)]
    \item As $R$-modules, we have\[\mathfrak{instrl}(B) \cong B \oplus \mathfrak{inder}(B),\]
where $\mathfrak{inder}(B) = \{D\in \mathfrak{instrl}(B) \mid D(1) =0 \}$.
    \item We have $\mathfrak{inder}(B) \cong  A^0 \oplus \mathfrak{der}(A)$, where $A^0 = \{ a\in A \mid T(a,1) = 0 \}$. 
\end{enumerate}
As a consequence, $\mathfrak{instrl}(B)$ is projective of constant rank $134$.
\end{Lma}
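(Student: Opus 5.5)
For (i), I will use the map $\mathfrak{instrl}(B)\to B$, $D\mapsto D(1)$, whose kernel is $\mathfrak{inder}(B)$ by definition. The plan is to show that this map is surjective and split. For $x\in B$ we have $V_{x,1}(1)=(x1)1+(11)x-(1x^*)1=2x-x^*$. Hence $V_{x,1}(1)=x$ for $x\in B^+$ and $V_{x,1}(1)=3x$ for $x\in B^-$. Since $6\in R^*$, the map
\[s:B\to\mathfrak{instrl}(B),\qquad s(x)=V_{x^+,1}+\tfrac13V_{x^-,1},\]
where $x^\pm$ are the symmetric and skew parts of $x$, is $R$-linear. It also satisfies $s(x)(1)=x$, so it is a section, and (i) follows, with $\mathfrak{instrl}(B)=V_{B,1}\oplus\mathfrak{inder}(B)$.

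For (ii), I will use Lemma \ref{lem:Vop}. Since $B$ is spanned by the four kinds of elements listed there, $\mathfrak{instrl}(B)$ is spanned by $V_{B,1}$ together with the operators $V_{\tbtmat0a00,\tbtmat00b0}$ and $V_{\tbtmat00b0,\tbtmat0a00}$. Projecting these onto $\mathfrak{inder}(B)$ along $V_{B,1}$ amounts to subtracting $s(D(1))$. From the explicit formulas, $D(1)$ has diagonal $(0,T(a,b))$ or $(T(a,b),0)$. After the correction, the inner derivations are therefore spanned by operators acting diagonally on the blocks: scalars on the $R$-entries, and $V^A_{a,b}$ or its adjoint-type modification $T(a,b)-V^A_{b,a}$ on the $A$-entries, up to multiples of identity-type terms.

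I then identify the span of these corrected operators. Each is determined by its action on the upper-right copy of $A$, namely $V^A_{a,b}-\tfrac13T(a,b)\Id$ or something similar, and this operator determines the others. The span of the $V^A_{a,b}$ is the inner structure algebra $\mathfrak{instrl}(A)=L_{A}\oplus\mathfrak{der}(A)$; this uses the classical decomposition $V^A_{a,b}=L_{\{a,b\}}+[L_a,L_b]$-type identities for Jordan algebras, valid over rings with $6$ invertible, together with $\mathfrak{der}(A)$ being spanned by inner derivations in the split case. Removing the identity direction, which corresponds to $V_{1,1}\in V_{B,1}$ after correction, leaves $L_{A^0}\oplus\mathfrak{der}(A)\cong A^0\oplus\mathfrak{der}(A)$. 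The directness of this sum, and the injectivity of the identification, are checked by evaluating on $1_A$: derivations kill $1$, while $L_a(1)=a$.

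For the rank, $A^0$ is a direct summand of corank $1$ in $A$ since $T(1,1)=3\in R^*$, so it is projective of rank $26$. $B$ has rank $56$. Moreover $\mathfrak{der}(A)$ is free of rank $52$ for split $A$ over $R$, because $\mathfrak{der}(A^s_\Z)\otimes\Z[\tfrac16]$ is a Chevalley Lie algebra of type $\F_4$ and this is compatible with base change; I would cite \cite{GPRbook} for this. The total is $56+26+52=134$.

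The main obstacle is (ii): showing rigorously over an arbitrary ring that the corrected operators from Lemma \ref{lem:Vop} span exactly $A^0\oplus\mathfrak{der}(A)$, with no extra or missing pieces. I would first try to reduce to $R=\Z[\tfrac16]$, using that everything is defined over $\Z[\tfrac16]$ and compatible with base change. There I would prove the spanning statement, and then check the rank over fields of characteristic not $2,3$ to conclude freeness.
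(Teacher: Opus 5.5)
Your proposal is correct and follows essentially the paper's route. Part (i) is the evaluation map $D\mapsto D(1)$, using $V_{x,1}(1)=2x-x^*$ and $3\in R^*$. Part (ii) uses Lemma \ref{lem:Vop} to reduce $\mathfrak{inder}(B)$ to operators on $A$, which split as $\mathfrak{der}(A)=\langle V^A_{a,b}-V^A_{b,a}\rangle$ (from \cite{GPRbook}) plus the multiplication part $V^A_{A^0,1}\cong A^0$, with the ranks again taken from \cite{GPRbook}.

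The one computation you left open has coefficient $\tfrac23$, not $\tfrac13$: on the upper-right block the corrected operator is $V^A_{a,b}-\tfrac23T(a,b)\Id$, because $V^A_{1,1}=2\Id$ and $T(1,1)=3$. This coefficient is exactly what removes the $1_A$-direction, since the case $a=b=1_A$ already lies in $V_{B,1}$. So the span is $V^A_{A^0,1}\oplus\mathfrak{der}(A)$, and the total rank is $134$ rather than $135$.
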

\begin{proof}
    The first part follows from the fact that $\mathfrak{inder}(B)$ is the kernel of the map $\mathfrak{instrl}(B) \to B \colon V \mapsto V(1)$, while $V_{B,1} \to B \colon V_{x,1} \mapsto V_{x,1}(1) = 2x-x^*$ is an isomorphim of $R$-modules because $3\in R^*$.

    By Lemma~\ref{lem:Vop}, we have 
    \[\mathfrak{inder}(B) \cong  W = \langle V^A_{a,b}-V^A_{b,a}, V^A_{c,d} \mid a,b,c,d\in A, T(c,d) = 0\rangle \subseteq\End(A).\]
    In \cite[Proposition~51.29]{GPRbook}, it was proven that $\mathfrak{der}(A)= \langle V^A_{a,b}-V^A_{b,a} \rangle$, where $\mathfrak{der}(A)$ is defined as in \cite[50.3]{GPRbook}. On the other hand, for $c,d\in A$ with $T(c,d) = 0$, we have $T(V^A_{c,d}1,1) =0 $ and 
    \[V^A_{c,d}-\frac{1}{2}V^A_{V^A_{c,d}1,1}\in \mathfrak{der}(A)\]
    since applying it to $1\in A$ gives zero. Hence $W \cong V^A_{A^0,1}\oplus \mathfrak{der}(A)$, and as the linear map $a\mapsto V^A_{a,1}$ maps $A^0$ bijectively to $V_{A^0,1}$, this proves (ii).

    The last assertion now follows from \cite[Proposition~51.29]{GPRbook}, where it is proved that $\mathfrak{der}(A)$ is projective of constant rank 52, while $A^0$ and $B$ are projective of constant rank 26 and 56, respectively.
\end{proof}

\begin{Prp}\label{PTKK} Let $B$ be a Brown algebra over $R$. The algebra $L=L(B)$ is a graded Lie algebra, which is finitely generated projective as an $R$-module. For each $R$-field $K$, $L_K$ is a simple $K$-Lie algebra of type $\mathrm E_8$.
\end{Prp}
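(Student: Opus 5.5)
Throughout, I reduce to the split case by faithfully flat descent, using Remark \ref{Rbasechange2}: all three properties (Lie algebra, finitely generated projective, simple of type $\E_8$ fibrewise) are compatible with base change and descend along faithfully flat extensions.

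\emph{Step 1: $L$ is projective of rank 248.} Choose a faithfully flat $R$-ring $S$ with $B_S\simeq B^s_S$. Then $L(B)_S=L(B_S)\simeq L(B^s_S)$, and by Lemma \ref{Lproj} its degree-zero part is projective of constant rank $134$. The remaining graded pieces $B$, $B$, $B^-$, $B^-$ are projective of ranks $56,56,1,1$; note that $B^-$ is projective because it is a direct summand of $B$, and it has rank $1$ by the split computation. This gives total rank $134+112+2=248$. One subtlety is that $\mathfrak{instrl}(B)$ is defined as a submodule of $\End(B)$, so I must check that its formation commutes with base change. It does: the span of the $V$-operators is the image of the linear map $B\otimes B\to\End(B)$, $x\otimes y\mapsto V_{x,y}$, and $S$ is flat, so images are preserved. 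Since finite generation and projectivity descend along faithfully flat maps, $L$ is finitely generated projective. Gradedness holds by construction.

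\emph{Step 2: the Jacobi identity.} Antisymmetry is built into the definition. The Jacobi identity is a polynomial identity in the structure, so I check it over $S$, where $B$ is split. Better still, it can be checked over $\Z[\tfrac16]$ for the split form $B^s_{\ZS}$, where everything is free and defined over a domain. Since $\ZS$ embeds into $\Q$, it then suffices to verify Jacobi over $\Q$ (or $\C$). There the result is Allison's classical theorem that the TKK-construction of a structurable algebra is a Lie algebra; for arbitrary structurable $B$ it follows from the structurable identity of Definition \ref{DStructurable}, as in \cite{AllTKK}. I would cite this identity-level verification rather than redo it; the identities used hold over any ring in which $2$ and $3$ are invertible. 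The Jacobi identity then transfers to $L(B^s_R)=L(B^s_{\ZS})\otimes R$ and descends to $L(B)$.

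\emph{Step 3: simplicity of type $\E_8$ over fields.} For a field $K$, $L_K\simeq L(B_K)$ and $B_K$ is a Brown algebra over $K$. Passing to an algebraic closure $\bar K$ (simplicity of $L_K$ follows from that of $L_{\bar K}$), $B_{\bar K}$ is split. It therefore remains to show that $L(B^s_{\bar K})$ is simple of type $\E_8$ in every characteristic $\neq 2,3$. This is the main obstacle. For characteristic $0$ it is classical (Allison; \cite{Rig}). For positive characteristic $p\ge5$ I would argue via the Chevalley basis. Let $\mathfrak g_\Z$ be the Chevalley $\Z$-form of $\E_8$; I aim to show $L(B^s_{\ZS})\simeq\mathfrak g_\Z\otimes\ZS$. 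The grading of $L$ by degrees $-2,\dots,2$ is the grading of $\E_8$ defined by the coefficient of the extremal simple root, with $\dim\mathfrak g_{\pm2}=1$, $\dim\mathfrak g_{\pm1}=56$ and $\mathfrak g_0$ of type $\E_7+\mathbb{G}_m$. I would match a $\ZS$-basis of $L$, built from the standard basis of the split Albert algebra and the $V$-operators of Lemma \ref{lem:Vop}, against root vectors. Alternatively, and more cheaply, I would invoke \cite[Lemma~3.2.3]{Sme} and \cite{Rig}, which treat the TKK-algebra of the split cubic norm pair over $\Z$-forms and identify it with the Chevalley algebra. Granting this identification, $L_{\bar K}\simeq\mathfrak g_{\bar K}$, and the Chevalley Lie algebra of type $\E_8$ is simple in every characteristic except $2,3$, since $\E_8$ has no exceptional characteristics besides $2,3,5$, and in characteristic $5$ it is still simple as $\E_8$ is simply laced with trivial center. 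This completes the argument.
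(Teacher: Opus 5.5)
Your Steps 1 and 2 are essentially the paper's argument. The paper first treats split $B$, getting projectivity from Lemma \ref{Lproj} and the Jacobi identity by reducing to $L(B^s_{\ZS})\subseteq L(B^s_{\mathbb Q})$, and then descends; you descend first. Your observation that $\mathfrak{instrl}(B)$ is the image of $x\otimes y\mapsto V_{x,y}$, and therefore commutes with flat base change, justifies what the paper simply takes from Remark \ref{Rbasechange2}. For the non-flat base change $\ZS\to R$ in Step 2 you still rely on that remark, exactly as the paper does.

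Step 3 is where you diverge, and in positive characteristic it rests on something you do not establish. The identification $L(B^s_{\ZS})\simeq\mathfrak g_\Z\otimes\ZS$ is only ``granted''. The reference you give for it, \cite[Lemma~3.2.3]{Sme}, is (as this paper uses it in Lemma \ref{lem:Vop}) a computation of $V$-operators for cubic norm pairs, not an identification with the Chevalley algebra. Producing such a $\ZS$-isomorphism would take real work, for instance an explicit Chevalley basis compatible with the $5$-grading.

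The paper avoids this entirely. It cites \cite[5.3]{Rig}, which gives that $L(B)$ is a simple Lie algebra of type $\E_8$ over any field of characteristic not $2$ or $3$. So once you are over a field, the conclusion is immediate: no split into characteristic $0$ versus $p\ge 5$ and no Chevalley-basis detour are needed.

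If you still pursue your route, fix the closing justification. The bad primes $2,3,5$ of $\E_8$ are irrelevant to simplicity of the Chevalley algebra. For simply laced types the only obstruction is $p$ dividing the order of the fundamental group, which is $1$ for $\E_8$. Hence the Chevalley algebra of type $\E_8$ is simple in every characteristic, and characteristic $5$ was never a special case.
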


\begin{proof} Assume first that $B$ is split. Then by the above lemma, $L(B)$ is finitely generated projective as an $R$-module. Further, $B=B^s\otimes_{\ZS} R$, where $B^s$ is the split Brown algebra over $\ZS$. Therefore, by the Remark \ref{Rbasechange2}, $L(B)=L(B^s)\otimes_{\ZS} R$. Since the grading and antisymmetry hold by definition, to show that $L(B)$ is a Lie algebra, we need to show the Jacobi identity. This follows by base change from the Jacobi identity over $\ZS$, so we may assume that $R=\ZS$. Since $L(B)$ is a projective, hence free,  $\ZS$-module, and since we have an injection $\ZS\to\Q$, an element $x\in L(B)$ is zero if and only if it becomes zero after base change to $\Q$. This reduces the statement to the case where $R=K$ is a field. Then $L_K=L(B^s\otimes_{\ZS}K)$, and by \cite[5.3]{Rig}, this is a simple Lie algebra over $K$ of type $\E_8$.

In the general case, $L(B)$ is fppf-locally isomorphic, as a graded algebra, to $L^s_R=L(B^s_R)$. By descent, it is thus projective as an $R$-module, and the multiplication is antisymmetric and satisfies the Jacobi identity. Thus $L(B)$ is a Lie algebra, and \cite[5.3]{Rig} gives its type.
\end{proof} 

For the general definition of a TKK-algebra of Brown type $\E_8$, we define the numbers $d_i, i\in\Z$, by
\[d_i=\left\{\begin{array}{ll}
    134 & \text{if\ } i=0, \\
    56 & \text{if\ } i=\pm1,\\
    1 & \text{if\ } i=\pm2, \text{\ and}\\
        0 & \text{otherwise}.
\end{array}\right.\]

\begin{Def}\label{DTKK} A \emph{TKK-algebra of Brown type $\E_8$} over $R$ is a $\Z$-graded algebra $L=\bigoplus_{i\in\Z} L_i$ over $R$, such that $L_i$ is finitely generated projective of constant rank $d_i$, and such that $L$ is fppf-locally isomorphic, as a graded algebra, to $L^s_R$. 
\end{Def}

We will call such a grading a \emph{TKK-grading of Brown type $\E_8$}.

Thus a TKK-algebra of type $\E_8$ is a graded algebra $L$ such that, for some faithfully flat $R$-ring $S$, there is an isomorphism $\phi: L\otimes_RS\to L^s_S$ such that $\phi((L\otimes_RS)_i)\subseteq (L^s_S)_i$ for each $i\in \Z$. By faithfully flat descent, every TKK-algebra of Brown type $\E_8$ is a Lie algebra. In the following, we will omit the words ``of Brown type $\E_8$'' as we will not consider other types.

Proposition \ref{PTKK} has the following immediate consequence.

\begin{Cor}
    For any Brown algebra $B$, the Lie algebra $L(B)$ is a TKK-algebra.
\end{Cor}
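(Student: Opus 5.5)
We check the two requirements of Definition \ref{DTKK} directly for $L=L(B)$ with its given $\Z$-grading. The plan is to split the work into local isomorphism with the split model, and the ranks of the graded components.

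\textbf{Local isomorphism.} By definition of a Brown algebra, there is a faithfully flat $R$-ring $S$ and an isomorphism of algebras with involution $B\otimes_R S\simeq B^s_R\otimes_R S=B^s_S$. By Remark \ref{Rbasechange2}, this isomorphism induces a graded isomorphism
\[L(B)\otimes_R S=L(B\otimes_R S)\overset{\sim}{\longrightarrow} L(B^s_S)=L^s_R\otimes_R S=L^s_S.\]
So $L(B)$ is fppf-locally isomorphic, as a graded algebra, to $L^s_R$.

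\textbf{Ranks of the components.} The components $L_{\pm1}$ are copies of $B$, hence projective of constant rank $56$. The components $L_{\pm 2}$ are copies of $B^-$, which is a direct summand of $B$ because $2\in R^*$, so it is finitely generated projective. Its rank can be read off after the faithfully flat base change to $S$, where it equals the rank of $(B^s)^-$. In the reduced description, the skew elements are $\diag(r,-r)$, so this rank is $1$. The component $L_0=\mathfrak{instrl}(B)$ satisfies $L_0\otimes_R S=\mathfrak{instrl}(B_S)$ by Remark \ref{Rbasechange2}, and this is projective of constant rank $134$ by Lemma \ref{Lproj}. Being finitely generated projective and of constant rank both descend along faithfully flat extensions, so $L_0$ is projective of constant rank $134$. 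All other components vanish by construction, which matches $d_i=0$ for $|i|>2$.

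The only point requiring care is the compatibility $\mathfrak{instrl}(B)\otimes_R S=\mathfrak{instrl}(B\otimes_R S)$ used for $L_0$. The inclusion of $\mathfrak{instrl}(B)\otimes S$ into $\End(B)\otimes S$ need not be injective a priori. This compatibility is exactly what Remark \ref{Rbasechange2} and the descent argument in the proof of Proposition \ref{PTKK} assert, so we take it from there. Alternatively, one can read off the projectivity of $L$ and its rank directly from Proposition \ref{PTKK}, with the per-component ranks following from the split case via the graded isomorphism.
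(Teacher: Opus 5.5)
Your proposal is correct and follows the paper's route: the paper obtains this corollary directly from Proposition \ref{PTKK}, whose proof gets the graded fppf-local isomorphism $L(B)_S\simeq L^s_S$ from Remark \ref{Rbasechange2} and descends projectivity and the ranks from the split case via Lemma \ref{Lproj}, which is exactly what you do component by component. The compatibility $\mathfrak{instrl}(B)\otimes_R S=\mathfrak{instrl}(B_S)$ that you flag needs no outside input for your faithfully flat $S$: tensoring the inclusion $\mathfrak{instrl}(B)\subseteq\End_R(B)$ with the flat module $S$ keeps it injective, and its image in $\End_R(B)\otimes_R S\cong\End_S(B_S)$ is the $S$-span of the $V_{x\otimes 1,y\otimes 1}$, which by bilinearity of $(x,y)\mapsto V_{x,y}$ is $\mathfrak{instrl}(B_S)$.
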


\begin{Rk} Let $B$ be a Brown algebra over $R$, and set $z=V_{1,1}\in L_0$, then by the definition of the multiplication, $\ad_z$ is the grading derivation associated with the grading in the construction above. Thus $z$ is a grading element of that grading.
\end{Rk}

The following proposition clarifies the different properties that a TKK-algebra can have.

\begin{Prp} Let $L$ be a $\Z$-graded algebra over $R$. Consider the following statements.
\begin{enumerate}[(i)]
    \item $L$ is the split TKK-algebra.
    \item $L=L(B)$ where $B$ is the split Brown algebra.
    \item $L=L(B)$ where $B$ is a reduced Brown algebra.
 \item $L=L(B)$ where $B$ is a Brown algebra such that $B^-$ is free.
    \item $L=L(B)$ where $B$ is a Brown algebra containing $j\in B^-$ with $j^2\in R^*1_B$.
    \item $L=L(B)$ where $B$ is a Brown algebra.
    \item $L$ is a TKK-algebra.
\end{enumerate}
Then $(i)\Leftrightarrow(ii)\Rightarrow(iii)\Rightarrow(iv)\Leftrightarrow (v) \Rightarrow(vi)\Rightarrow(vii)$. If $R$ is a local ring, then $(iv)\Leftrightarrow(vi)$. If $R$ is an algebraically closed field, all statements are equivalent.
\end{Prp}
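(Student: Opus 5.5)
I will prove the chain of implications one at a time, then the two partial converses.

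For $(i)\Leftrightarrow(ii)$, this is the definition: $L^s_R$ was defined as $L(B^s_R)$. Read as "graded isomorphic to", (ii) gives (i) directly. Conversely, an isomorphism $B\simeq B^s_R$ induces a graded isomorphism of TKK-algebras by Remark \ref{Rbasechange2}.

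For $(ii)\Rightarrow(iii)$, note that $B^s_R$ is reduced by construction, being obtained from $A^s_R$.

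For $(iii)\Rightarrow(iv)$, take $B$ reduced. Then $B^-$ consists of the matrices $\diag(r,-r)$, since the involution swaps the diagonal entries and fixes the off-diagonal ones, so $B^-$ is spanned freely by $j=\diag(1,-1)$.

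For $(iv)\Leftrightarrow(v)$, Proposition \ref{Pconjinv} does the work. If $B^-$ is free of rank 1 with generator $j$, then $j$ is conjugate invertible. Writing $j^2=\mu 1_B$ via Remark \ref{Rprodskew}, Lemma \ref{Lprodskew} gives $\mu\in R^*$. Conversely, $j^2=\mu1$ with $\mu$ a unit makes $j$ conjugate invertible by Lemma \ref{Lprodskew}, and then $B^-=Rj$ is free by Proposition \ref{Pconjinv}.

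The implication $(v)\Rightarrow(vi)$ is trivial, and $(vi)\Rightarrow(vii)$ is the Corollary following Proposition \ref{PTKK}.

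For $(vi)\Rightarrow(iv)$ when $R$ is local, $B^-$ is a direct summand of $B$, hence finitely generated projective. Its rank is constant equal to $1$, as one sees fppf-locally in the split case where $B^-=Rj$. A finitely generated projective module over a local ring is free, so $B^-$ is free of rank $1$.

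When $R=K$ is algebraically closed, it suffices to show $(vii)\Rightarrow(i)$. Let $L$ be a TKK-algebra, so there is a faithfully flat $K$-ring $S$ and a graded isomorphism $L_S\simeq L^s_S$. I must descend this to a graded isomorphism over $K$. Consider the functor of graded isomorphisms $L\to L^s_K$. It is a torsor under $\bAutgr(L^s_K)$, and it is representable by a nonempty affine $K$-scheme of finite type, nonempty because it has an $S$-point. The Nullstellensatz then gives a $K$-point, that is, a graded isomorphism $L\simeq L^s_K$.

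This last step is the main obstacle. The delicate point is that the isomorphism functor is of finite type and nonempty. Its representability as a closed subscheme of $\prod_i \mathbf{Isom}(L_i,L^s_i)$, cut out by polynomial equations in the structure constants, needs to be set up carefully. Alternatively, one can avoid it: since every graded $K$-algebra fppf-locally isomorphic to a given one is a twisted form, its class lies in $H^1_{\mathrm{fppf}}(K,\bAutgr(L^s))$. For an affine group of finite type over an algebraically closed field this set is trivial, because every torsor has a rational point by the Nullstellensatz. So the same Nullstellensatz argument applies either way.
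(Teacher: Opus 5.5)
Your proof is correct. For the chain of implications and for the local case it follows the paper's proof. The paper treats all left-to-right implications except $(iv)\Rightarrow(v)$ as holding by definition, cites Proposition \ref{Pconjinv} for $(iv)\Leftrightarrow(v)$, and uses that finitely generated projective modules over local rings are free. Your computation $B^-=Rj$ with $j=\diag(1,-1)$ for reduced $B$, and your remark that $B^-$ has constant rank $1$, just fill in details the paper leaves implicit.

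The genuine difference is in the algebraically closed case. The paper closes the loop with the classical fact that every simple Lie algebra of type $\E_8$ over an algebraically closed field is isomorphic to the split one. You instead show that the graded isomorphisms $L\to L^s_K$ form a nonempty affine $K$-scheme of finite type, and take a rational point by the Nullstellensatz; equivalently, $H^1_{\mathrm{fppf}}(K,\bAutgr(L^s_K))$ is trivial.

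Your route is more elementary, and it produces a \emph{graded} isomorphism, which is what $(i)$ asks for when $L$ is regarded as a $\Z$-graded algebra. The classification result by itself only gives an isomorphism of ungraded Lie algebras. That would still need upgrading, for instance via the conjugacy of extremal pairs in Proposition \ref{Trans}. The paper's route, in exchange, needs no representability discussion.

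The step you flag as the main obstacle is in fact routine. The graded isomorphisms form the closed subscheme of the affine space of pairs of graded linear maps $(f,g)$ cut out by $fg=\Id$, $gf=\Id$, and $f([x,y])=[f(x),f(y)]$ on basis elements. This scheme is nonempty because it has a point over the nonzero faithfully flat ring $S$.
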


\begin{proof}
The left-to-right implications, except possibly $(iv)\Rightarrow(v)$, hold by definition; as does the implication $(ii)\Rightarrow(i)$. The equivalence $(iv)\Leftrightarrow(v)$ holds by Proposition \ref{Pconjinv}. When $R$ is a local ring, every finitely generated projective module is free of finite rank, and when $R$ is an algebraically closed field, every simple Lie algebra of type $\E_8$ is isomorphic to the split one. 
\end{proof}

\subsection{Associative form}
We will need an associative bilinear form on $L=L(B)$. Note that as $L$ is finitely generated projective, one can define a trace form $\Tr$ on $\End_R(L)$, see e.g.\ \cite[4.3]{Bou}. Using that trace form, the Killing form is defined by $\kappa(x,y) = \Tr(\ad_x\ad_y)$. By \cite[4.3(18-20)]{Bou} and the Jacobi identity, it satisfies the usual properties of a trace form, i.e., it is an associative, symmetric bilinear form.

However, since we do not require 5 to be invertible, this form will not always be useful. Rather, we will define another bilinear form explicitly. Later we will see that it is closely connected to the extremal elements we will study below, and prove that it has the desired properties without requiring $5\in R^*$. Over fields of good characteristic, i.e.\ where 30 is invertible, our form is a scalar multiple of the Killing form. Therefore, we obtain a simple expression for the Killing form in terms of the TKK-algebra structure.

\begin{Def}\label{Dtau}
Let $L=L(B)$ and write $x=(s_{-},x_{-},\Sigma_iV_{a_i,b_i},x_+,s_+)$ and  $y=(t_{-},y_{-},\Sigma_jV_{c_j,d_j},y_+,t_+)$ for elements of $L$. We define $\tau:L\times L\to R$ by 
\[\tau(x,y)=\Sigma_{i,j}\beta(V_{a_i,b_i}c_j, d_j)+\beta(x_+,y_{-})+\beta(x_-,y_+)+s_+t_{-}+s_{-}t_+.\]
We call $\tau$ \emph{the bilinear form of $L$}.
\end{Def}

Here $\beta$ is the symmetric bilinear form on $B$ from Lemma \ref{Lbeta}. Since $V$-operators are linear, $\tau$ is independent of the representation of the 0-component of $x$ as a sum of $V$-operators. This also holds for the $0$-component of $y$, since by Lemma \ref{Lbetaidentity}, the first term is equal to $\Sigma_{i,j}\beta(V_{c_j,d_j}a_i, b_i)$. Moreover, by Remark \ref{Rprodskew}, $s_-t_+$ and $s_+t_-$ are elements of $R1_B$, which we identify with $R$. Thus the form $\tau$ is well-defined.

\begin{Prp}\label{Pregular} The form $\tau$ is a regular, symmetric bilinear form. Moreover, it is associative in the sense that, for all $x,y,z\in L$,
\[\tau([x,y], z)=\tau(x,[y,z]).\]    
\end{Prp}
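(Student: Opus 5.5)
Symmetry and regularity reduce to the graded components; associativity is checked by grading and descent.

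\emph{Symmetry.} The form $\tau$ pairs $L_i$ only with $L_{-i}$.
\begin{itemize}
\item On $L_{\pm1}\times L_{\mp1}$ it is given by $\beta$, which is symmetric by Lemma \ref{Lbeta}. The same holds on $L_{\pm2}\times L_{\mp2}$, where it is the product of scalars $s_+t_-$.
\item On $L_0$, symmetry of $(V_{a,b},V_{c,d})\mapsto\beta(V_{a,b}c,d)$ is exactly the well-definedness observation following Definition \ref{Dtau}, i.e.\ Lemma \ref{Lbetaidentity}.
\end{itemize}

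\emph{Regularity.} Since $\tau$ is compatible with the grading in this way, it suffices to prove that each pairing $L_i\times L_{-i}\to R$ is perfect.
\begin{itemize}
\item For $i=\pm1$ this is regularity of $\beta$ (Lemma \ref{Lbeta}).
\item For $i=\pm2$, note that $jj=\mu\in R^*$ and $B^-=Rj$ by Proposition \ref{Pconjinv}. If $B^-$ is not free, first pass to a faithfully flat cover where it is.
\item For $i=0$, regularity descends along faithfully flat extensions by \cite[I.7.1.3]{Knu}, and $\tau$ commutes with base change. So we may assume $B$ is split and $R=\ZS$ or a field. Then we check the pairing on $\mathfrak{instrl}(B)$ using the decomposition $B\oplus A^0\oplus\mathfrak{der}(A)$ of Lemma \ref{Lproj} and the explicit operators of Lemma \ref{lem:Vop}. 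The form should pair $V_{B,1}$ with itself via $\beta$, up to a unit such as $3$. It should restrict on $A^0$ to a multiple of $T$, and on $\mathfrak{der}(A)$ to a multiple of the Killing-type trace form.
\end{itemize}
Regularity over $\ZS$ amounts to the determinant being a unit. That is the main obstacle, since $5$ is not inverted and the Killing form of $\E_8$ involves $60$. The claim has to be that our normalized form has determinant a power of $2$ and $3$. I would test this by reducing mod $p$ for $p\ge5$ and showing injectivity of $L_0\to L_0^\vee$ over fields $K$ of characteristic $\ne2,3$, using \cite[I.7.1.3]{Knu} again. Over a field $K$, I would argue that the radical of $\tau|_{L_0}$ is an ideal of $L_0$ stable under the action on $L_{\pm1}$. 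Once associativity is known, the full radical is an ideal of the simple algebra $L_K$ (Proposition \ref{PTKK}). It is nonzero-free because $\tau(z,z)\neq0$ for $z=V_{1,1}$, hence zero. This suggests proving associativity first.

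\emph{Associativity.} Both sides are trilinear, so it suffices to take $x,y,z$ homogeneous of degrees $i,j,k$. Both sides vanish unless $i+j+k=0$. Using symmetry and antisymmetry of the bracket, the cases reduce to degree triples $(0,0,0)$, $(0,1,-1)$, $(0,2,-2)$, $(1,1,-2)$, $(1,-1,0)$ and $(2,-1,-1)$, up to sign and permutation.
\begin{itemize}
\item $(0,0,0)$ and $(1,-1,0)$: these are direct consequences of Lemma \ref{Lbetaidentity} together with the commutator identity of Definition \ref{DStructurable}. For example, $\tau([x_+,y_-],V_{c,d})=\beta(V_{x,y}c,d)$ matches $\tau(x_+,[y_-,V_{c,d}])=\beta(x,V_{d,c}y)$ by the right-hand equality of Lemma \ref{Lbetaidentity}.
\item The cases involving $L_{\pm2}$: by descent, assume $B$ split with $j=\diag(1,-1)$. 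Then use the simplified brackets in the Remark after the TKK table, expressing everything via $\beta$, $b$ and $\mu$. For instance, in case $(1,1,-2)$ one must compare $\mu^{-1}b(x,y)\,j\cdot tj$ with $\beta(x,tjy)$, which follows from $\beta(x,y)=\mu^{-1}b(jx,y)$ and antisymmetry of $b$.
\end{itemize}
These are routine but sign-sensitive verifications.
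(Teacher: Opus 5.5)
Your treatment of symmetry and associativity follows the paper's: reduce to homogeneous elements, handle the degree-$0$ cases with Lemma \ref{Lbetaidentity} and the commutator identity, and check the cases involving $L_{\pm2}$ by direct computation in the split case. The genuine difference is regularity in degree $0$, and there the paper has a much shorter route.

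The paper never uses the structure of $\mathfrak{instrl}(B)$. By \cite[I.7.1.3]{Knu} it suffices to show injectivity of $x\mapsto\tau_K(x,-)$ over each $R$-field $K$. The components of $x$ in degrees $\pm1,\pm2$ are killed by regularity of $\beta$ and by $\mu\in K^*$. For the degree-$0$ component $D=\sum_iV_{a_i,b_i}$ one has $0=\tau_K(D,V_{c,d})=\beta(Dc,d)$ for all $c,d$. Hence $Dc=0$ for all $c$, and $D=0$ simply because $L_0$ is by definition a space of endomorphisms of $B$. So the decomposition from Lemma \ref{Lproj}, the determinant over $\ZS$ and the worry about $5$ are all unnecessary.

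The argument you finally settle on is also correct. The radical of $\tau_K$ is an ideal by associativity, it is proper because $\tau(V_{1,1},V_{1,1})=\beta(1,1)=2\neq0$, and so it vanishes by simplicity of $L_K$ (Proposition \ref{PTKK}). This is more conceptual, but it forces you to prove associativity first. It also leans on simplicity of $L_K$ in every characteristic $\neq2,3$, including $5$, which is imported from \cite{Rig}. The paper's argument uses only regularity of $\beta$ and is logically independent of both.

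There are two small bookkeeping points in your case analysis.
\begin{enumerate}[(i)]
\item Symmetry of $\tau$ and antisymmetry of the bracket only relate $(a,b,c)$ to $(c,b,a)$, plus cyclic consequences. So for the degrees $\{-2,0,2\}$ you really need a second triple such as $(0,-2,2)$ besides $(0,2,-2)$; make sure your ``up to sign'' covers this.
\item In case $(1,1,-2)$ one has $[y_+,(tj)_-]=-t\,jy$, so the right-hand side is $\beta(x,-tjy)$, not $\beta(x,tjy)$. Since $\beta(x,jy)=\mu^{-1}b(\mu y,x)=-b(x,y)$, this equals $t\,b(x,y)$, which matches $\mu^{-1}b(x,y)j\cdot tj=t\,b(x,y)$.
\end{enumerate}
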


\begin{proof} The form $\tau$ is bilinear by construction. By \cite[I.7.1.3]{Knu}, $\tau$ is regular if and only if $\tau_K$ is regular for each $R$-field $K$, which in turn is equivalent to the map $x\mapsto\tau_K(x,-)$ being injective, since $L_K$ is a finite-dimensional $K$-vector space. If $x=(s_{-},x_{-},\Sigma_iV_{a_i,b_i},x_+,s_+)\in L_K$ is such that $\tau_K(x,-)$ is the zero map, then from the definition of $\tau$ and the regularity of  $\beta$, it follows that $s_\pm=0$ and $x_\pm=0$, and that $\Sigma_iV_{a_i,b_i}c=0$ for all $c\in B$. Thus $x=0$, proving that $\tau$ is regular. As for symmetry, the only non-trivial part is the symmetry of the first term, which follows from Lemma \ref{Lbetaidentity}. 

To prove associativity, it suffices to do so after a faithfully flat extension, so we may assume that $B$ is split and that $j=\diag(\lambda,-\lambda)$ for some $\lambda\in R^*$. The identity being linear in each variable, it suffices to consider the cases where $x$, $y$ and $z$ are homogeneous. In that case, $\tau([x,y],z)=\tau(x,[y,z])=0$ unless $\deg x+\deg y+\deg z=0$.
This leaves 19 non-trivial possibilities for the triple $(\deg x,\deg y,\deg z)$. By symmetry of $\tau$ and antisymmetry of the Lie bracket, we have the equivalence
\[\tau([x,y], z)=\tau(x,[y,z]) \Longleftrightarrow \tau([z,y], x)=\tau(z,[y,x]).\]
Thus for each value of $\deg y$, it suffices to consider the triples $(\deg x,\deg y,\deg z)$ where, say, $\deg x\leq\deg z$. It thus suffices to check the identity in the following 11 possibilities for $(\deg x,\deg y,\deg z)$:
\[\begin{array}{cccccc}
  (-2,0,2)   & (-2,1,1) & (-2,2,0) & (-1,-1,2) & (-1,0,1) & (-1,1,0)   \\
    (-1,2,-1) & (0,-2,2) & (0,-1,1) & (0,0,0) & \text{and} & (1,-2,1). 
\end{array}\]
Moreover, in degree 0, bilinearity reduces this to the case of a single $V$-operator.

In the case $(0,0,0)$, setting $x=V_{a,b}$, $y=V_{c,d}$ and $z=V_{e,f}$, and using the formula for the commutator of $V$-operators together with Lemma \ref{Lbetaidentity}, we have
\[\begin{array}{rl}
     \tau([x,y],z)&=\beta(V_{V_{a,b}c,d}e,f)-\beta(V_{c,V_{b,a}d}e,f) \\
     &=\beta(V_{e,f}V_{a,b}c,d)-\beta(V_{e,f}c,V_{b,a}d)\\
     &=\beta(V_{f,e}d,V_{a,b}c)-\beta(V_{b,a}d,V_{e,f}c)\\
     &=\beta(V_{a,b}c,V_{f,e}d)-\beta(V_{a,b}V_{e,f}c,d)\\
     &=\beta(V_{c,V_{f,e}d}a,b)-\beta(V_{V_{e,f}c,d}a,b)\\
     &=-\tau([z,y],x)=\tau(x,[y,z]).
\end{array}\]

In the case $(-1,0,1)$, setting $x=x_-\in L_{-1}$, $y=V_{a,b}$ and $z=z_+\in L_1$, we compute
\[\tau([x,y],z)=\beta(V_{b,a}x_-,z_+)=\beta(V_{a,b}z_+,x_-)=\tau(x,[y,z]),\]
by Lemma \ref{Lbetaidentity}. The case $(0,-1,1)$ is similar. In the case $(-1,1,0)$, setting $x=x_-\in L_{-1}$, $y=y_+\in L_{1}$ and $z=V_{a,b}$, the same lemma gives
\[\tau([x,y],z)=-\beta(V_{y_+,x_-}a,b)=-\beta(V_{a,b}y_+,x_-)=\tau(x,[y,z]).\]
The other cases are proved by straight-forward computations.
\end{proof}

\begin{Cor}\label{Ckillingform} If $R$ is a field of characteristic not dividing 30, then $\kappa=60\tau$ is the Killing form of $L$.
\end{Cor}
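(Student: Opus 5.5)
The plan is to reduce to a uniqueness statement for invariant forms on a simple Lie algebra over a field, and then to fix the scalar by a single evaluation.

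First reduction. Over a field $K$ with $\operatorname{char}K\nmid 30$, $L$ is split simple of type $\E_8$ after extension to $\bar K$. On a simple Lie algebra over an algebraically closed field of good characteristic, the space of associative symmetric bilinear forms is one-dimensional, because $\E_8$ is self-dual in good characteristic.

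Second reduction. Both $\kappa$ and $\tau$ are associative and symmetric: $\kappa$ by the general properties recalled before Definition \ref{Dtau}, and $\tau$ by Proposition \ref{Pregular}. Hence $\kappa=c\tau$ over $\bar K$ for some $c\in\bar K$. Both forms are defined over $K$ and $\tau$ is nonzero, so $c\in K$.

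Moreover, both forms are defined over $\ZS$ for the split algebra and are compatible with base change. So I may compute $c$ once over $\Q$, or over $\C$, for $L^s$. The result then holds over every field of characteristic not dividing 30, after using that $L_K$ becomes split over $\bar K$. The characteristic restriction is needed only for the uniqueness statement and so that $60\neq 0$ is meaningful.

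Computing the constant. I take the grading element $z=V_{1,1}$, whose $\ad_z$ acts by $i$ on $L_i$. Then
\[\kappa(z,z)=\Tr(\ad_z^2)=\sum_i i^2 d_i=2\cdot 4\cdot 1+2\cdot 1\cdot 56=8+112=120.\]
On the other hand, $\tau(z,z)=\beta(V_{1,1}1,1)$. With $V_{1,1}1=(1\cdot1)1+(1\cdot1)1-(1\cdot1)1=1$, this gives $\beta(1,1)$. In the reduced model of Example \ref{Ereduced}, $1=\diag(1,1)$, so $\beta(1,1)=r_1r_2'+r_2r_1'=2$. Hence $c=120/2=60$.

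Main obstacle. The main obstacle is justifying the one-dimensionality of invariant forms in good characteristic without $5$ inverted elsewhere. I would try to cite the standard result that $L$ is absolutely simple and that $\mathrm{Hom}_L(L,L^*)=\mathrm{End}_L(L)=\bar K$, using $\tau$ regular to identify $L\cong L^*$ and Schur's lemma for the absolutely irreducible adjoint module. Absolute irreducibility of the adjoint module here follows from simplicity of $L_{\bar K}$ (Proposition \ref{PTKK}): an invariant form $\kappa$ gives an $L$-endomorphism $\tau^{-1}\kappa$, and its eigenspace is a nonzero ideal, hence all of $L$.
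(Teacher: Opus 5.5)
Your proposal is correct and follows the paper's proof: pass to an algebraically closed field, use simplicity of $L$ to get $\kappa=c\tau$, and fix $c=60$ by evaluating both forms on the grading element $V_{1,1}$, where $\kappa(V_{1,1},V_{1,1})=120$ and $\tau(V_{1,1},V_{1,1})=\beta(1,1)=2$. Your Schur-lemma argument for proportionality, which uses the regularity of $\tau$ (Proposition \ref{Pregular}), is slightly cleaner than the paper's appeal to non-degeneracy of $\kappa$; the detour through $\ZS$ and $\Q$ is unnecessary, since the evaluation works directly over any such field.
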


\begin{proof} We may assume that $R$ is algebraically closed and deduce the general case by descent. From \cite{Rig}, we know that $L$ is a simple Lie algebra. Thus any non-degenerate, associative, symmetric, bilinear form is a scalar multiple of the Killing form $\kappa$. Thus $\kappa=\lambda\tau$ for some $\lambda\in R$. To determine $\lambda$, we set $x=V_{1,1}$. On the one hand, the above formula gives \[\tau(x,x)=\beta(1,1)=2\]
while on the other hand, since $x$ is the grading element, $\ad_x$ acts as $i\Id$ on the graded component $L_i$, so
\[\tr(\ad_x\ad_x)=2^2+56+0+56+2^2=120.\]
The claim follows.
\end{proof}

\subsection{Automorphisms and the bilinear form}
Let $L=L(B)$. We will show that $\bAut(L)$ preserves the bilinear form $\tau$ of $L$. While we will not need this result later on, we include it for completeness. To this end, let $V$ be the $R$-module of bilinear forms on $L$. The natural action of $\bG=\bAut(L)$ on $L$ extends to an action of $\bG$ on $V$, making it a $\bG$-module: if $S\in\Ralg$, $\phi\in\bG(S)$ and $\sigma\in V_S$, then
\[\phi\cdot \sigma: (x,y)\mapsto \sigma(\phi(x),\phi(y)).\]

For each $S\in\Ralg$, let $(V\otimes_RS)^{\bG_S}$ be the set of all $\sigma\in V_S$ that are fixed by the action of the the $S$-group scheme $\bG_{S}$. Let moreover $\bG^\tau$ be the subgroup scheme of $\bG$ stabilizing $\tau$.

\begin{Prp}\label{Ptaustable} Let $L=L(B)$ with bilinear form $\tau$. Then $\bG^\tau=\bG$.
\end{Prp}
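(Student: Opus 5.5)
We want to show that every automorphism of $L_S$, for every $S\in\Ralg$, preserves $\tau_S$. The statement is compatible with base change, so we may work over $\ZS$ first and deduce the general case by descent. The setup with the $\bG$-module $V$ suggests an invariant-theoretic route: show that $(V\otimes_R S)^{\bG_S}$ is free of rank one, spanned by an invariant form, and that $\tau$ lies in it.

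Since $\bG^\tau$ is a closed subgroup scheme of $\bG$, the equality $\bG^\tau=\bG$ is fppf-local on $R$. By Remark \ref{Rbasechange2}, $L(B)$ is fppf-locally isomorphic, as a graded algebra, to $L^s$. The form $\tau$ is defined intrinsically from $B$ and $\beta$, and $\beta$ is isomorphism invariant by Remark \ref{Rbeta}. Hence under such an isomorphism $\tau$ corresponds to the form of $L^s$, and we may assume $L=L^s_R=L^s_{\ZS}\otimes R$. We may also assume $R=\ZS$: if $\phi\in\bG(S)$ preserves $\tau_S$ universally over $\ZS$, then the same holds over $R$.

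The key step is that $\bG$ is smooth with connected fibres over $\ZS$. I would assume this is proved in Section 4; alternatively, $\bAut(L^s)$ is the split adjoint group of type $\E_8$. The plan is then as follows.

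First, check that $\tau$ is invariant under the Lie algebra, i.e. $\tau([x,y],z)+\tau(y,[x,z])=0$. This is Proposition \ref{Pregular} together with symmetry of $\tau$ and antisymmetry of the bracket.

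Second, pass to $\Q$. Over a field $K$ of characteristic $0$, $\bG_K$ is connected, so Lie-algebra invariance forces $\phi\cdot\tau_K=\tau_K$ for all $\phi\in\bG(K')$. Equivalently, the stabilizer $\bG^{\tau}_K$ is a closed subgroup with full Lie algebra, hence equal to $\bG_K$ by connectedness in characteristic $0$. More directly, by Corollary \ref{Ckillingform}, $\tau_K=\kappa/60$ and the Killing form is invariant under all automorphisms.

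Third, spread out. The morphism $\bG\to \bV$, $\phi\mapsto \phi\cdot\tau-\tau$, where $\bV$ is the affine space of bilinear forms, vanishes on the generic fibre $\bG_\Q$. Since $\bG$ is flat over $\ZS$, its coordinate ring injects into its tensor product with $\Q$. Hence this morphism vanishes identically, and $\bG^\tau=\bG$.

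The main obstacle is the smoothness/flatness input for $\bG$ over $\ZS$. Flatness is what makes the density argument valid; without it there could be components of $\bG$ supported in characteristic $5$, where the Killing-form argument fails. I would obtain flatness either from the paper's smoothness results (Proposition \ref{Psmooth}) or by identifying $\bAut(L^s)$ with the Chevalley group of type $\E_8$, which is smooth over $\Z$.
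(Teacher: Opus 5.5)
Your proof is correct and is essentially the paper's argument: reduce to the split algebra over $\ZS$, get invariance in characteristic $0$ from $\tau=\kappa/60$ (Corollary \ref{Ckillingform}), and spread out using flatness of $\bG$ over $\ZS$. The paper phrases this last step via Seshadri's lemma on invariants under the flat extension $\ZS\to\overline{\Q}$, and its deduction $\tau\in V^{\bG}$ rests on the same flatness that you make explicit; one correction is that this flatness comes from Proposition \ref{Smoothaut} (smoothness of $\bG$), not from Proposition \ref{Psmooth}, which concerns the schemes $\bX^\pm$.
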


\begin{proof} First we treat the case where $R=\ZS$ and  $L(B)$ is the split TKK-algebra of type $\E_8$ over $R$. Let $k=\overline{\Q}$. Then $\tau_{k}$ is a scalar multiple of the Killing form over $k$, by Corollary \ref{Ckillingform}. Therefore it is fixed by $\bG_{k}$, i.e.\ $\tau_{k}\in (V_{k})^{\bG_{k}}$. Since $R\to k$ is a flat ring extension, from \cite[Lemma 2]{Ses} we thus have
\[\tau_{k}\in(V\otimes_Rk)^{\bG_{k}}=(V^\bG)\otimes_Rk,\]
so that $\tau\in V^\bG$. In other words, the inclusion $\bG^\tau\to \bG$ is an isomorphism in the split case over $\ZS$. It is therefore \emph{a fortiori} an isomorphism in the split case and, by faithfully flat descent, in the general case, over any ring $R$. Then for each $R$-ring $S$, the inclusion $\bG^\tau(S)\to \bG(S)$ is an isomorphism of groups, hence an equality. The result follows.\end{proof}

\section{Related groups}
Let $L$ be a TKK-algebra over $R$. As above, we denote by $\bG$ the automorphism group scheme $\bAut(L)$ of $L$. It is an affine group scheme over $R$.

\begin{Prp}\label{Smoothaut} The group $\bG=\bAut(L)$ is a semisimple (hence in particular smooth) affine group scheme of type $\E_8$.
\end{Prp}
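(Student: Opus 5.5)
Since being semisimple of type $\E_8$ is a property that is fppf-local on $R$, and $L$ is fppf-locally isomorphic (even as a graded algebra) to $L^s_R$, the plan is to reduce to the split case. If $S$ is a faithfully flat $R$-ring with $L_S\simeq L^s_S$, then $\bG_S\simeq\bAut(L^s_S)=\bAut(L^s_{\ZS})_S$. Smoothness, affineness, and the requirement that geometric fibres be connected semisimple of type $\E_8$ all descend along fppf covers. So it suffices to treat $R=\ZS$ and $L=L^s_{\ZS}=L(B^s)$, and then base change.

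Over $\ZS$, I would compare $\bAut(L^s)$ with the Chevalley group scheme $\bE$ of type $\E_8$, which is split semisimple and both simply connected and adjoint, since $\E_8$ has trivial fundamental group. The first step is to identify $L^s_{\ZS}$ with the Chevalley $\ZS$-form of the $\E_8$ Lie algebra, i.e.\ $\mathrm{Lie}(\bE)$. Over $\Q$, $L^s_\Q$ is the split simple Lie algebra of type $\E_8$ by Proposition \ref{PTKK}. One then checks, via the explicit grading and a Chevalley basis adapted to it (root vectors spanning $L_{\pm2}=B^-$, $L_{\pm1}=B$, and $L_0$ containing $\mathfrak{instrl}$ of the split Albert/Brown algebra), that the lattice $L^s_{\ZS}$ is a Chevalley lattice. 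The adjoint representation then gives a homomorphism $\mathrm{Ad}:\bE\to\bG$ over $\ZS$. I expect this identification of integral structures to be the main obstacle. The fallback is to avoid it by working fibrewise, as follows.

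The key steps are:
\begin{enumerate}[(1)]
\item For every geometric point $\bar k$ of $\Spec\ZS$ (characteristic $0$ or $\ge5$), $L_{\bar k}$ is simple of type $\E_8$ by Proposition \ref{PTKK}.
\item The classical result that $\mathrm{Aut}$ of the $\E_8$ Lie algebra in characteristic $\neq2,3$ is the adjoint group, since there are no diagram automorphisms, gives $\bG_{\bar k}\simeq\bE_{\bar k}$. This includes the scheme-theoretic statement that $\mathrm{Lie}(\bG_{\bar k})=\mathfrak{der}(L_{\bar k})=\ad(L_{\bar k})$, which has dimension $248$. That every derivation is inner should follow from $\tau$ being regular and associative (Proposition \ref{Pregular}) in characteristic $\neq 2,3$, for instance by known results for $\E_8$ in characteristic $5$.
\item Hence every fibre of $\bG\to\Spec\ZS$ is smooth, connected, and of dimension $248$.
\item Smoothness of $\bG$ itself then follows if $\bG$ is flat. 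To get this, I would use that $\mathrm{Ad}:\bE\to\bG$ is a homomorphism of finitely presented group schemes over $\ZS$ with $\bE$ smooth, and that it is an isomorphism on every fibre by (2). The fibrewise isomorphism criterion (EGA IV 17.9.5), applied with $\bE$ flat, then shows that $\mathrm{Ad}$ is an isomorphism, so $\bG\simeq\bE$ is semisimple of type $\E_8$.
\end{enumerate}

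If the Chevalley-lattice identification in the first step proves too laborious, I would instead build the map $\bE\to\bG$ only over $\Q$ and take the schematic closure. Alternatively, I would argue smoothness directly via the infinitesimal lifting criterion, using $H^1$ and $H^2$ of $L$ with coefficients in $L$ vanishing fibrewise, by Whitehead-type results, which hold for $\E_8$ in characteristic $\ge5$. Combined with the fibre dimension count, this gives smoothness of $\bG$ without first identifying it with $\bE$. Semisimplicity of type $\E_8$ is then read off from the geometric fibres.
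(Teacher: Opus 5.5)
Your reduction to the split algebra over $\ZS$ and your fibrewise input in steps (1)--(3) agree with the paper, which takes directly from Rigby the fact that $\bG_K$ is semisimple of type $\E_8$ (so smooth, connected, of dimension $248$) for every field $K$ with $6\in K^*$. Your suggested justification that all derivations are inner because $\tau$ is regular and associative is not valid as a general principle. For instance, $\mathfrak{psl}_p$ in characteristic $p\ge5$ is simple, carries a nondegenerate invariant form induced by the trace form, and still has outer derivations. In characteristic $5$ this part must therefore rest on a citation.

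The genuine gap is step (4), flatness of $\bG$ over $\ZS$. This is the crux, because smooth fibres alone do not give a smooth morphism. Step (4) uses a homomorphism $\mathrm{Ad}:\bE\to\bG$ defined over $\ZS$. Such a map exists only once you have identified $L^s_{\ZS}$ with the Chevalley lattice $\mathrm{Lie}(\bE)$, which is exactly the step you flagged as the main obstacle and which the fallback was meant to bypass. So the fallback is circular. Your two further escape routes are not carried out. Taking a closure of a morphism $\bE_\Q\to\bG_\Q$ does not by itself produce a morphism over $\ZS$. The infinitesimal-lifting route would need $H^2(L_k,L_k)=0$ for all residue fields, including characteristic $5$, together with a cohomology-and-base-change argument, and neither is supplied.

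The missing idea, which the paper uses via \cite[Lemma B.1]{AG}, is that over the Dedekind ring $\ZS$ no comparison with $\bE$ is needed.
\begin{enumerate}[(i)]
\item Let $\bG'$ be the schematic closure of the generic fibre $\bG_\Q$ in $\bG$. It is a closed subgroup scheme, flat over $\ZS$, with $\bG'_\Q=\bG_\Q$.
\item By flatness, each special fibre $\bG'_{\mathbb F_p}$ with $p\ge5$ has dimension $248$.
\item Since $\bG_{\mathbb F_p}$ is smooth and connected of dimension $248$, it is integral, so its closed subscheme $\bG'_{\mathbb F_p}$ of the same dimension must be all of it.
\item Hence $\bG'\to\bG$ is a closed immersion of finitely presented schemes with flat source that is an isomorphism on every fibre. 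By EGA IV, 17.9.5, it is an isomorphism.
\end{enumerate}
Thus $\bG$ is flat with smooth connected fibres, hence smooth with connected fibres, and semisimplicity and type $\E_8$ are read off fibrewise. If your remark about schematic closure was meant as the closure of the image of $\bE_\Q$, i.e.\ of $\bG_\Q$ itself, then this dimension comparison on the special fibres is exactly what you still have to supply, and $\bE$ plays no role in it.
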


\begin{proof} Since $L$ is fppf-locally isomorphic to the split algebra $L^s_R$, the group $\bG$ is an fppf-twisted form of $\bAut(L^s_R)$. Since $L^s_R$ is obtained by base change from the split TKK-algebra over $\Z[\frac16]$, we may assume that $L$ is split and that $R=\Z[\frac16]$, which is a PID and hence a Dedekind domain.

 The group $\bG$ is finitely presented. By \cite[7.1 and 11.2]{Rig},  for any field $K$ with $6\in K^*$, $\bG_K$ is a semisimple group of type $\E_8$, hence in particular smooth, connected and of dimension 248. From \cite[Lemma B.1]{AG} it thus follows that $\bG$ is smooth and connected. Since semisimplicity and type are defined fibrewise, the claim follows. 
\end{proof}

\subsection{Graded automorphisms}

We will also be interested in the graded automorphism group $\bAutgr(L)$. This is defined for any $\Z$-graded $R$-algebra $L$ by
\[\bAutgr(L)(S)=\{\phi\in\bAut(L)(S)\mid \forall i\in\Z: \phi(L_i\otimes S)=L_i\otimes S \}\]
 for each $S\in \Ralg$. Let $\chi$ be the grading cocharacter of $L$. Recall that the centralizer $\bC_\bG(\chi)$ is defined by
\[\bC_\bG(\chi)(S)=\{g\in\bG(S)\mid \forall T\in \Salg, \forall t\in T^*:  g_T\chi(t)g_T^{-1}=\chi(t) \}\]
for any $R$-ring $S$. The following general lemma is helpful.

\begin{Lma} Let $L$ be a $\Z$-graded algebra over $R$, and denote the grading character by $\chi$. If $\bG=\bAut(L)$ is a smooth affine $R$-group scheme, then $\bC_\bG(\chi)$ is a smooth closed subgroup scheme of $\bG$.  Moreover, the inclusion $\bC_\bG(\chi)\to\bG$ factors through a homomorphism $\bC_\bG(\chi)\to\bAutgr(L)$.
\end{Lma}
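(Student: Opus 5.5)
The smoothness statement is a general fact about centralizers of tori in smooth group schemes, and I would quote it. The image of $\chi:\Gm\to\bG$ is a split torus of multiplicative type acting on the smooth affine $R$-group $\bG$ by conjugation. By SGA3, Exp.~XI, Cor.~5.3 (see also Conrad's \emph{Reductive group schemes}, Lemma 2.2.4), the centralizer in a smooth affine group of a group of multiplicative type acting on it is representable by a closed subscheme and is smooth. More precisely, the fixed-point functor of a $\Gm$-action on a smooth affine scheme is a smooth closed subscheme. This is exactly $\bC_\bG(\chi)$, since $g$ centralizes $\chi$ precisely when $g$ is fixed by the conjugation action $t\cdot g=\chi(t)g\chi(t)^{-1}$. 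Closedness can also be seen directly. $\bG$ is affine and $\Gm$ is affine and flat, so the equations $g\chi(t)g^{-1}=\chi(t)$ in $\mathcal O(\bG)\otimes R[t,t^{-1}]$ give closed conditions: expand them in the basis $t^n$ of $R[t,t^{-1}]$ and take coefficients. That $\bC_\bG(\chi)$ is a subgroup functor is immediate from the definition.

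For the factorization, I need that any $g\in \bC_\bG(\chi)(S)$ preserves each $L_i\otimes S$. Take $x\in L_i\otimes S$. For each $S$-ring $T$ and $t\in T^*$,
\[
\chi_T(t)\bigl(g_T(x_T)\bigr)=g_T\bigl(\chi_T(t)x_T\bigr)=t^ig_T(x_T).
\]
So $g(x)$ lies in $(L_S)_i^{\chi}$, and Lemma \ref{Lcharacter}, applied over $S$ to the graded algebra $L_S$, gives $(L_S)_i^\chi=(L_S)_i$. That lemma needs finitely generated projective components. If that hypothesis is not available in general, I would instead argue directly: take $T=S[t,t^{-1}]$, write $g(x)=\sum_j y_j$, and compare coefficients of $t^j$. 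This works for arbitrary modules, since $L\otimes S[t,t^{-1}]=\bigoplus_n L_S t^n$ freely, and gives $y_j=0$ for $j\neq i$.

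Hence $g(L_i\otimes S)\subseteq L_i\otimes S$. The same holds for $g^{-1}$, which also lies in the centralizer, so we get equality and $g\in\bAutgr(L)(S)$. These inclusions are functorial in $S$, so they define the homomorphism $\bC_\bG(\chi)\to\bAutgr(L)$, and its composite with $\bAutgr(L)\subseteq\bG$ is the inclusion.

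The main obstacle is the smoothness claim. I do not plan to prove it from scratch but to cite the SGA3/Conrad result on fixed points of multiplicative-type actions on smooth affine schemes. The one thing to check is that $\bC_\bG(\chi)$ as defined here agrees with that fixed-point functor, which follows by unwinding the definitions.
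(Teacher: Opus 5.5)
Your proposal is correct and follows the paper's proof. Smoothness and closedness are quoted from SGA3, Exp.~XI, 5.3 (with Conrad's Lemma 2.2.4), and the factorization comes from the identity $\chi_T(t)(g_T(x_T))=t^ig_T(x_T)$ together with Lemma \ref{Lcharacter}. Your coefficient comparison over $S[t,t^{-1}]$ is a genuine improvement: it removes the finitely generated projective hypothesis that Lemma \ref{Lcharacter} requires but the statement does not assume. Likewise, your use of $g^{-1}$ makes explicit the equality $g(L_i\otimes S)=L_i\otimes S$ demanded by the definition of $\bAutgr(L)$, which the paper leaves implicit.
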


\begin{proof} Since $\Gm$ is of multiplicative type and finitely generated, the first statement follows from \cite[Expos\'e XI, 5.3]{SGA3} (see also \cite[Lemma 2.2.4]{Con}). As for the second statement, let $S$ be an $R$-ring and $\phi\in \bC_\bG(\chi)(S)$. We need to show that $\phi(x)\in (L_S)_i$ for every $x\in (L_S)_i$. By Lemma \ref{Lcharacter}, this amounts to showing $\phi(x)\in (L_S)_i^\chi$. If $T$ is any $S$-ring and $t\in T^*$, then by definition of $\bC_\bG(\chi)$,
\[\chi(t)\phi_T(x_T)=\phi_T\chi(t)(x_T)=\phi_T(t^ix)=t^i\phi_T(x_T).\]
Since $\phi_T(x_T)=\phi(x)_T$, this precisely shows that $\phi(x)\in (L_S)_i^\chi$, as required.    
\end{proof}

\begin{Prp}\label{Pautgr} Let $L$ be a TKK-algebra of Brown type $\E_8$ over $R$, and denote its grading character by $\chi$. Then the inclusion $\bC_\bG(\chi)\to\bAutgr(L)$ is an isomorphism. Consequently, $\bAutgr(L)$ is smooth with connected fibres.
\end{Prp}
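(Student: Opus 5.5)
We must show that the homomorphism $\bC_\bG(\chi)\to\bAutgr(L)$ from the preceding lemma is an isomorphism. It is a monomorphism: both are subfunctors of $\bG$, and the map is compatible with the inclusions into $\bG$. So it suffices to show that every graded automorphism centralizes $\chi$, functorially in $S$.

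Let $S$ be an $R$-ring and $\phi\in\bAutgr(L)(S)$, so $\phi((L_S)_i)=(L_S)_i$ for all $i$. For any $S$-ring $T$ and $t\in T^*$, the grading on $L_T$ is given by $(L_T)_i=(L_i)_T$, and $\phi_T$ preserves each $(L_T)_i$ because it is the base change of $\phi$. For $x\in(L_T)_i$ we then get
\[\phi_T\chi(t)(x)=\phi_T(t^ix)=t^i\phi_T(x)=\chi(t)\phi_T(x),\]
since $\phi_T(x)\in (L_T)_i$. As $L_T=\bigoplus_i (L_T)_i$ and both sides are $T$-linear, $\phi_T\chi(t)=\chi(t)\phi_T$. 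Hence $\phi\in\bC_\bG(\chi)(S)$. So the inclusion is bijective on $S$-points for every $S$, hence an isomorphism of functors and therefore of schemes. I do not expect any difficulty here; the argument only requires that the grading be compatible with base change, which was noted at the start of Section~3.

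For the consequences, smoothness of $\bAutgr(L)$ follows from the preceding lemma, since $\bG$ is smooth by Proposition~\ref{Smoothaut}. Connectedness of the fibres is the substantive part. The fibres of $\bC_\bG(\chi)$ over $\Spec R$ are $\bC_{\bG_K}(\chi_K)$ for fields $K$. Over a field, the centralizer of a torus (here the image of $\chi$, a subtorus of dimension at most one) in a connected reductive group is connected; this is the standard result that centralizers of tori in connected smooth affine groups are connected, i.e.\ Levi subgroups. The fibres $\bG_K$ are connected semisimple by Proposition~\ref{Smoothaut}, so the fibres of $\bC_\bG(\chi)$ are connected.

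The step requiring the most care is citing this connectedness correctly over arbitrary fields. I would reduce to $\bar K$, since connectedness is insensitive to field extension, and cite e.g.\ \cite[Expos\'e XXVI]{SGA3} or \cite{Con} for the fact that centralizers of subtori in reductive groups are connected Levi subgroups.
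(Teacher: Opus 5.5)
Your proof is correct, but for the isomorphism you take a different and more elementary route than the paper.

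The paper argues indirectly. Both $\bC_\bG(\chi)$ and $\bAutgr(L)$ are finitely presented, and $\bC_\bG(\chi)$ is smooth, hence flat. The fibrewise isomorphism criterion \cite[$_4$17.9.5]{EGAIV} then reduces the claim to the case of a field, where the equality is quoted from \cite[4.2]{Rig}.

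You instead verify the missing inclusion $\bAutgr(L)(S)\subseteq\bC_\bG(\chi)(S)$ directly on points. A graded automorphism remains graded after any base change $S\to T$, and $\chi_T(t)$ acts by the scalar $t^i$ on $(L_T)_i$, so the two commute. Combined with the preceding lemma, this gives equality of subfunctors of $\bG$. Your argument shows that the identification $\bC_\bG(\chi)=\bAutgr(L)$ is essentially formal. It uses only the preceding lemma, and through it Lemma \ref{Lcharacter}. It needs neither smoothness of $\bG$, nor flatness, nor any input from the theory over fields.

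The paper's route follows the same pattern it uses elsewhere (compare Proposition \ref{Pstrgr}), but here it adds a dependence on \cite{Rig} that your argument shows is unnecessary. Smoothness of $\bG$ (Proposition \ref{Smoothaut}) is needed only for the consequences, and there you argue as the paper does. Smoothness follows from the lemma. Connectedness of fibres follows because centralizers of tori in the connected groups $\bG_K$ are connected, which is the standard fact behind the paper's appeal to \cite[4.2]{Rig}. Your identification $\bC_\bG(\chi)_K=\bC_{\bG_K}(\chi_K)$ and the reduction to $\overline{K}$ are both fine.
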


\begin{proof} Both $\bC_\bG(\chi)$ and $\bAutgr(L)$ are finitely presented, and $\bC_\bG(\chi)$ is smooth, hence flat. Therefore, \cite[$_4$17.9.5]{EGAIV} reduces the isomorphism statement to the case where $R$ is a field, where it is contained in \cite[4.2]{Rig}. Smoothness then holds by the above lemma. For any $R$-field $K$, connectedness of $\bAutgr(L)_K$ follows by \cite[4.2]{Rig} from the connectedness of $\bG$.
\end{proof}

\subsection{Isotopies and similitudes}
When $L=L(B)$ for a Brown algebra $B$, we will compare the graded automorphism group to the structure group $\bStr(B)$ consisting of the autotopies of $B$. We recall the definition.

\begin{Def} An \emph{isotopy} from a structurable algebra $B$ with $V$-operator $V$ to a structurable algebra $B'$ with $V$-operator $V'$ (both over $R$) is an $R$-linear bijection $\phi: B\to B'$ for which there exists an $R$-linear bijection $\widehat\phi:B\to B'$ such that
\[\phi V_{x,y}=V'_{\phi(x),\widehat\phi(y)}\phi\]
for all $x,y\in B$.
\end{Def}
From \cite[Section 8]{AH}, we know that $\widehat\phi$ is uniquely determined by $\phi.$

We then define $\bStr(B)$ by
\[\bStr(B)(S)=\{\phi\in \bGL_S(B_S)\mid \phi:B_S\to B_S \text{\ is an isotopy}\}\]
for each $R$-ring $S$. Isotopies from an algebra to itself are called \emph{autotopies}.

\begin{Ex} If $j\in B^-$ satisfies $j^2=\mu1_B\in R^*1_B$, then it follows from Lemma \ref{Lj} that the map $L_j:B\to B, x\mapsto jx$, is an autotopy, with $\widehat{L_j}=L_{-\mu^{-1}j}=L_{\widehat j}$.
    
\end{Ex}

The following useful lemma is a slight generalization of  a result from \cite[Proposition 12.3]{AH}. 

\begin{Lma}\label{Lstructurable} Let $B$ and $B'$ be structurable algebras that are finitely generated projective as $R$-modules. Let $\phi:L(B)\to L(B')$ be a graded isomorphism and denote its restriction to $L(B)_i$ by $\phi_i$. Then $\phi_1$ is an isotopy.
\end{Lma}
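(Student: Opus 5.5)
Since $\phi$ is graded, each component maps to the corresponding one: $\phi_i:L(B)_i\to L(B')_i$ for each $i$. Because $\phi$ is a bijection and the gradings are direct sums, each $\phi_i$ is an $R$-linear bijection. In particular, $\phi_1:B\to B'$ and $\phi_{-1}:B\to B'$ are $R$-linear bijections, where we identify $L_{\pm1}$ with copies of $B$ (resp.\ $B'$). The natural candidate for $\widehat{\phi_1}$ is therefore $\phi_{-1}$, and we will verify that
\[\phi_1 V_{x,y}=V'_{\phi_1(x),\phi_{-1}(y)}\phi_1\qquad\text{for all } x,y\in B.\]

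The verification uses only the brackets listed in the definition of the TKK-construction. Let $x,y,z\in B$, and write $x_+,z_+\in L_1$ and $y_-\in L_{-1}$. By definition, $[x_+,y_-]=V_{x,y}\in L_0$ and $[V_{x,y},z_+]=(V_{x,y}z)_+$, so
\[(V_{x,y}z)_+=[[x_+,y_-],z_+].\]
Apply $\phi$, which is a Lie algebra homomorphism. The left-hand side becomes $\phi_1(V_{x,y}z)$. The right-hand side becomes $[[\phi_1(x)_+,\phi_{-1}(y)_-],\phi_1(z)_+]$, computed in $L(B')$. Using the same two bracket rules in $L(B')$, this equals $V'_{\phi_1(x),\phi_{-1}(y)}\phi_1(z)$. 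Since $z$ is arbitrary, this gives the required identity. The identity needs nothing beyond the bracket table, so no hypothesis on $B$ being a Brown algebra is used; projectivity only guarantees that the TKK-construction behaves as stated.

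The one point needing care is that $\phi_0$ is not used directly. We only use that $\phi$ preserves brackets and degrees. We also need that the bracket $L_0\times L_1\to L_1$ is evaluation of operators, which holds because $L_0=\mathfrak{instrl}(B)$ is a submodule of $\End(B)$ acting naturally on $L_1$. I do not expect a real obstacle. The only subtlety is being careful with the sign conventions of the $L_{-1}$ identification, namely $[V_{x,y},z_-]=-V_{y,x}z_-$, but these are not needed for the degree $(1,-1,1)$ computation above.

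Thus $\phi_1$ is an isotopy with $\widehat{\phi_1}=\phi_{-1}$. Since $\widehat{\phi_1}$ is unique by \cite[Section 8]{AH}, this also identifies $\phi_{-1}$ as the image of $\phi_1$ under the hat operation.
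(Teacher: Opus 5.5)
Your proposal is correct and follows essentially the same argument as the paper. Both compute $[[x_+,y_-],z_+]=(V_{x,y}z)_+$ from the bracket table and apply the graded isomorphism $\phi$ to obtain $\phi_1V_{x,y}z=V'_{\phi_1(x),\phi_{-1}(y)}\phi_1(z)$, with $\widehat{\phi_1}=\phi_{-1}$ (you additionally spell out why $\phi_{-1}$ is a linear bijection, which the paper leaves implicit).
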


\begin{proof} We will show that $\phi_1V_{x,y}z=V'_{{\phi_1(x)},{\phi_{-1}(y)}}\phi_1(z)$ for all $x,y,z\in B$. We let $L=L(B)$ and focus on $L_{-1}\oplus L_0\oplus L_1$ and $L'_{-1}\oplus L'_0\oplus L'_1$ whose elements we write as triples. If now $x,z\in L_1=B$ and $y\in L_{-1}=B$, then by the definition of the Lie bracket on $L$,
\[[[(0,0,x),(y,0,0)],(0,0,z)]=(0,0, V_{x,y}z).\]
Since $\phi$ is a graded isomorphism, applying it yields
\[(0,0, V'_{\phi_1(x),\phi_{-1}(y)}\phi_1(z))=(0,0, \phi_1V_{x,y}z),\]
and the claim follows.
\end{proof}

In case $B=B'$, this implies that restriction to $L_1$ defines a map $\bAutgr(L(B))\to\bStr(B)$ by $\phi\mapsto\phi_1$. By construction, it is a natural transformation that respects composition, hence a homomorphism of affine group schemes. In fact, it is more.

\begin{Prp}\label{Pstrgr} The restriction map $\bAutgr(L(B))\to\bStr(B)$ is an isomorphism of affine group schemes.
\end{Prp}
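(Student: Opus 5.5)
We will construct an explicit inverse $\bStr(B)\to\bAutgr(L(B))$, natural in $S$, and check that the two composites are identities. Since everything commutes with base change, it suffices to work with $S$-points for an arbitrary $R$-ring $S$; replacing $R$ by $S$, we argue with $R$-points.

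\emph{Injectivity.} Let $\phi\in\bAutgr(L)(R)$ with $\phi_1=\Id$. By Lemma \ref{Lstructurable}, $\phi_1=\Id$ is an isotopy with companion $\phi_{-1}$. The uniqueness of $\widehat\phi$ from \cite[Section 8]{AH} then forces $\phi_{-1}=\widehat{\Id}=\Id$. Next, $L_0=[L_1,L_{-1}]$ and $L_{\pm2}=[L_{\pm1},L_{\pm1}]$: indeed $x_+y_+^*-y_+x_+^*$ spans $B^-$, as one sees by taking $y_+=1$ and $x_+\in B^-$, using $2\in R^*$. Hence $\phi$ is the identity on all of $L$. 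Thus the restriction map is a monomorphism.

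\emph{Surjectivity.} Given an autotopy $\psi$ with companion $\widehat\psi$, we define $\Phi$ on $L$ by
\[\Phi_1=\psi,\qquad \Phi_{-1}=\widehat\psi,\qquad \Phi_0(V_{x,y})=V_{\psi x,\widehat\psi y}=\psi V_{x,y}\psi^{-1},\]
the last equality being the autotopy condition. This shows that $\Phi_0$ is well defined and a Lie automorphism of $L_0$. On $L_{\pm2}$ we set $\Phi_{\pm2}(xy^*-yx^*)=\psi(x)\psi(y)^*-\psi(y)\psi(x)^*$, and analogously with $\widehat\psi$ on $L_{-2}$. The real content is to show that $\Phi_{\pm2}$ is well defined and that $\Phi$ respects all the brackets in the table. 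Following \cite[Proposition 12.3]{AH}, whose argument is purely algebraic and works over rings, the relevant identities are consequences of the autotopy identity: $\psi$ preserves the bilinear map $B\times B\to B^-$, $(x,y)\mapsto xy^*-yx^*$, up to an operator on $B^-$ determined by $\psi$, and similarly for $\widehat\psi$.

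This is the main obstacle. To avoid heavy computation, I would reduce to a faithfully flat cover where $B$ is split. There, $B^-=Rj$ is free, and $\psi$ multiplies $b_j$ by a scalar $\nu(\psi)\in R^*$ — it is a similitude of the Freudenthal triple system $Q(B,j)$, cf.\ \cite{Als2} — while $\widehat\psi$ multiplies it by $\nu^{-1}$. We then define $\Phi_{\pm2}$ as multiplication by $\nu^{\pm1}$ and check the bracket relations of the Remark giving the simplified formulas for $L_{\pm2}$ degree by degree, in the same way as in the proof of Proposition \ref{Pregular}. Since this inverse is canonical, it descends from the cover.

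Alternatively, once injectivity is known, one can argue as follows. Both groups are finitely presented, and $\bAutgr(L)$ is smooth by Proposition \ref{Pautgr}, so a monomorphism can be checked to be an isomorphism fibrewise by \cite[$_4$17.9.5]{EGAIV}, provided $\bStr(B)$ is flat. Over fields, the statement is \cite[Proposition 12.3]{AH} together with the analysis in \cite{Rig}. I would try the explicit construction first, since flatness of $\bStr(B)$ is not yet available at this point in the paper.
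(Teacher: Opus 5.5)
Your fallback in the last paragraph is precisely the paper's proof. Your reason for setting it aside rests on a misreading of \cite[$_4$17.9.5]{EGAIV}. The fibrewise isomorphism criterion requires flatness only of the \emph{source}, together with local finite presentation of source and target. Flatness of the map, and then of the target, comes out as a conclusion via the fibrewise flatness criterion. Here the source is $\bAutgr(L(B))$, which is smooth by Proposition \ref{Pautgr}, so nothing beyond finite presentation is needed of $\bStr(B)$. The paper simply reduces to fields and quotes \cite[Proposition 12.3]{AH}; it does not even need your injectivity step.

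Your main route, an explicit inverse, is genuinely different and does work, but you leave its real content unchecked. With $B^-=Rj$ and $\nu=m_\psi$, you need the following:
\begin{enumerate}[(i)]
\item $\widehat\psi$ is an autotopy with companion $\psi$, for $[L_0,L_{-1}]$. This follows from $\widehat\psi=(\nu\mu)^{-1}L_j\psi L_j$ and the autotopy $L_j$.
\item $\beta(\psi x,\widehat\psi y)=\beta(x,y)$, for $[L_0,L_{\pm2}]$.
\item $\psi(jx)=\nu j\widehat\psi(x)$, for $[L_{\pm2},L_{\mp1}]$.
\item $\Phi_0(V_{1,1})=V_{1,1}$, for $[L_2,L_{-2}]$. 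This is immediate since $V_{1,1}=\Id_B$.
\end{enumerate}
All of these follow from the \cite{AH} identities used in Proposition \ref{PInv}(i), whose proof does not use the present statement, so there is no circularity. Your monomorphism argument, together with the sheaf property of $\bAutgr(L)$, then handles descent from the cover.

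Compared with the paper, this route avoids the smoothness of $\bAutgr(L)$, and hence Rigby's field results and the centralizer description. It gives smoothness of $\bStr(B)$ as a corollary rather than requiring it as input. It also produces the explicit action on $L_{\pm2}$ that the paper later imports from \cite{AH} in Proposition \ref{Pstab}. The paper's route gains brevity by outsourcing everything to the field case.
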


\begin{proof} Both groups are finitely presented, and $\bAutgr(L(B))$ is smooth, hence flat, by Proposition \ref{Pautgr}. By the fibre-wise isomorphism criterion \cite[$_4$17.9.5]{EGAIV}, it is enough to show isomorphism over fields, which was done in \cite[Proposition 12.3]{AH}. (See also \cite[5.5]{Rig}.)
\end{proof}

\begin{Prp} Let $B$ be a Brown algebra over $R$. The group $\bStr(B)$ is a reductive group scheme.
\end{Prp}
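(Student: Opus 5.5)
The plan is to transport everything to the graded automorphism group via Proposition \ref{Pstrgr}. That result gives an isomorphism $\bStr(B)\simeq\bAutgr(L(B))$, and Proposition \ref{Pautgr} identifies the latter with $\bC_\bG(\chi)$, where $\bG=\bAut(L(B))$ and $\chi:\Gm\to\bG$ is the grading cocharacter. So it suffices to prove that $\bC_\bG(\chi)$ is a reductive $R$-group scheme, meaning a smooth affine group scheme whose geometric fibres are connected and reductive.

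Smoothness and connectedness of the fibres are already contained in Proposition \ref{Pautgr}. For reductivity I would invoke the general result that the centralizer of a torus in a reductive group scheme is reductive (\cite[Expos\'e XIX, 2.8]{SGA3}, see also \cite[Lemma 2.2.4]{Con}). Here $\bG$ is semisimple, hence reductive, by Proposition \ref{Smoothaut}. One point needs care: $\chi$ is a homomorphism rather than a subtorus. Its image is a subtorus, being a quotient of $\Gm$ by a closed subgroup of multiplicative type, and this quotient is again $\Gm$ or trivial on each fibre. The centralizer of the cocharacter equals the centralizer of its image. Moreover, $\chi$ is fibrewise nontrivial, since it acts by $s^{\pm1}$ on the nonzero components $L_{\pm1}$, so the image is a rank-one torus.

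If one prefers to avoid this general theorem, a fibrewise argument works instead. Over an algebraically closed field $K$ with $6\in K^*$, the centralizer of a torus in the connected reductive group $\bG_K$ is a Levi subgroup, hence connected reductive. Combined with the smoothness already established, this gives reductivity, since reductivity of a smooth affine group scheme with connected fibres is a fibrewise condition. Concretely, one can also cite \cite[4.2]{Rig}, where $\bAutgr(L_K)$ is identified as a Levi subgroup of type $\E_7\times\Gm$, or equivalently with the structure group of the Brown algebra over a field.

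The main obstacle I expect is purely bookkeeping: checking that the cited centralizer result applies over an arbitrary base $R$, that is, that $\chi$ generates a torus and not merely a group of multiplicative type, and ensuring smoothness is taken from Proposition \ref{Pautgr} rather than re-proved. Once $\bStr(B)\simeq\bC_\bG(\chi)$ is in hand, the statement follows immediately.
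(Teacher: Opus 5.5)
Your proposal is correct and is essentially the paper's own argument. The paper also passes through $\bStr(B)\simeq\bAutgr(L(B))\simeq\bC_\bG(\chi)$, takes smoothness and connectedness of the geometric fibres from Proposition~\ref{Pautgr}, and concludes exactly by your fibrewise variant: centralizers of tori in connected reductive groups over algebraically closed fields are reductive.

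Your primary route via \cite[Expos\'e XIX, 2.8]{SGA3} over $R$ also works, and the torus issue is simpler than you make it. Since $\chi_S(s)$ acts on the faithful module $(L_S)_1$ as multiplication by $s$, $\chi$ is a monomorphism, hence a closed immersion onto a rank-one subtorus.
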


\begin{proof} The above proposition, together with Proposition \ref{Pautgr}, implies that $\bStr(B)$ is a smooth affine group scheme whose geometric fibres (i.e.\ $\bStr(B)_k=\bStr(B_k)$ for any algebraically closed $R$-field $k$) are connected. Since the geometric fibres are isomorphic to $\bC_\bG(\chi)_k=\bC_{\bG_k}(\chi_k)$, and centralizers of tori are reductive over algebraically closed fields, the assertion follows.    
\end{proof}

Given a Freudenthal triple system $Q=(M,b,t)$, its invariance group scheme $\bInv(Q)$ is defined as that subgroup of $\bGL(M)$ that respects $b$ and $t$ in the sense that, for each $R$-ring $S$, a linear map $\phi:M\to M$ belongs to $\bInv(Q)$ if and only if
\[b(\phi(x),\phi(y))=b(x,y), \qquad \text{and}\qquad \phi t(x,y,z)=t(\phi(x),\phi(y),\phi(z))\]
for all $x,y,z\in M_S$. Recall that to a Brown algebra $B$ and a conjugate invertible element $j\in B^-$, we have associated in Remark \ref{RFTS} the Freudenthal triple system $Q(B,j)$. The following was proved in \cite{GPR}.

\begin{Lma}\label{Smooth} The group $\bInv(Q(B,j))$ is a simple, simply connected affine group scheme of type $\E_7$.
\end{Lma}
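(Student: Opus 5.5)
The plan is to reduce to the split case over $\ZS$ by descent and then use fibrewise information, in the same way the paper handled $\bAut(L)$ in Proposition \ref{Smoothaut}.

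\emph{Step 1: reduction to the split case.} Since $B$ is fppf-locally split, $Q(B,j)$ is fppf-locally isomorphic to $Q(B^s,j_0)$ with $j_0=\diag(1,-1)$. Here we rescale $j$ by a unit locally, using Proposition \ref{Pconjinv} and Remark \ref{Rbeta}; replacing $j$ by $\lambda j$ only scales $b_j$ and $t_j$ by $\lambda$, which does not change the invariance group up to a canonical identification. Consequently $\bInv(Q(B,j))$ is an fppf twisted form of $\bInv(Q(B^s_R,j_0))$. Being simple, simply connected and of type $\E_7$ are properties that are fppf-local on the base, so it suffices to treat the split case. That case is obtained by base change from $\ZS$, so we may assume $R=\ZS$.

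\emph{Step 2: fibres.} For each field $K$ with $6\in K^*$, the split triple system $Q(B^s_K,j_0)$ is the classical Freudenthal triple system on the 56-dimensional module. By Freudenthal/Brown, and as recalled in \cite{GPR}, its invariance group is the simply connected split group of type $\E_7$; in particular it is smooth, connected, of dimension 133. One checks that the form $b$ and the map $t$ (equivalently, the quartic form) are those of the standard construction via the reduced Brown algebra from Example \ref{Ereduced}.

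\emph{Step 3: smoothness over $\ZS$.} The group $\bInv(Q)$ is a finitely presented closed subgroup of $\bGL_{56}$. All of its fibres over $\ZS$ are smooth and connected of constant dimension 133, and the generic fibre is dense. Invoking \cite[Lemma B.1]{AG} exactly as in Proposition \ref{Smoothaut}, $\bInv(Q)$ is smooth with connected fibres. Its geometric fibres are then simple simply connected of type $\E_7$, so it is a semisimple group scheme of type $\E_7$ that is simply connected.

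The main obstacle is the flatness and fibre-dimension input to Step 3: we must make sure that the scheme-theoretic stabilizer has no non-reduced structure in small characteristic. We will settle this by computing its Lie algebra over each residue field and showing it is 133-dimensional. This is precisely where $6\in R^*$ is used: the quartic form is recovered from $t$ and $b$ after dividing by 6. Alternatively, one can cite \cite{GPR} directly for the split case.
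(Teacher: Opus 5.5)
Your route differs from the paper's, which gives no argument and simply cites \cite{GPR} for the statement over an arbitrary ring $R$. You instead derive the ring-level statement from field-level facts.

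Step 1 is sound. Replacing $j$ by $\lambda j$ with $\lambda\in R^*$ multiplies both $b_j$ and $t_j$ by $\lambda$, so $\bInv(Q(B,j))$ is literally the same subgroup of $\bGL(B)$. The properties in question (smooth, affine, simple, simply connected of type $\E_7$) are fppf-local. Step 3 is the same gluing over the Dedekind ring $\ZS$ as in Proposition \ref{Smoothaut}. Density of the generic fibre is something \cite[Lemma B.1]{AG} delivers, not an input you need to supply.

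What this buys is a treatment parallel to that of $\bAut(L)$. The price is that all the content sits in Step 2, and there ``in particular it is smooth'' does not follow from the classical results of Freudenthal, Brown and Springer. Those identify the group of $\overline K$-points, i.e.\ the reduced group. Smoothness of the scheme-theoretic stabilizer over $\mathbb F_p$, $p\geq 5$, requires showing that its Lie algebra has dimension $133$, which you announce but do not carry out. If you fall back on citing \cite{GPR}, your reduction becomes redundant, since that is exactly what the paper does for the general case.

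You can supply the missing input from results already in the paper, none of which depend on this lemma.
\begin{enumerate}[(i)]
\item Take $B$ reduced and $j=\diag(1,-1)$. The split exact sequence $\bone\to\bInv(Q)\to\bStr(B)\to\Gm\to\bone$ makes $\bInv(Q)$ a retract, as a scheme, of $\bStr(B)\simeq\bAutgr(L(B))$, via $\phi\mapsto\phi\circ\phi_{m_\phi}^{-1}$. By Propositions \ref{Pautgr} and \ref{Pstrgr}, $\bStr(B)$ is smooth with connected fibres.
\item A finitely presented retract of a smooth scheme is smooth: lift through the inclusion, then apply the retraction. Its fibres are connected, being images of connected fibres.
\item Over an algebraically closed field $k$, $\bStr(B)_k\simeq\bC_{\bG_k}(\chi)$ is a Levi subgroup of type $\E_7$ in the simply connected group $\bG_k$ of type $\E_8$. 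The kernel $\bInv(Q)_k$ of $m$ contains its derived subgroup and has the same dimension $133$, so the two are equal.
\item Derived subgroups of Levi subgroups of simply connected groups are simply connected, which gives the type.
\end{enumerate}
Your Step 1 then yields the lemma for arbitrary $B$, and \cite[Lemma B.1]{AG} is no longer needed.
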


The group $\bStr(B)$ is related to similitudes of $Q=Q(B,j)$ and to $\bInv(Q(B,j))$. To this end, consider the similitude group $\bSim(Q)$, which is the group $R$-functor such that, for each $S\in \Ralg$, $\bSim(Q)(S)$ consists of all $\phi\in\bGL(B)(S)$ that satisfy
\[b_j(\phi(x),\phi(y))=m_\phi b_j(x,y)\quad\text{and}\quad t_j(\phi(x),\phi(y),\phi(z))=m_\phi\phi(t_j(x,y,z))\]
for some $m_\phi\in S^*$. We then say that $\phi$ is a \emph{similitude} of $Q$ with \emph{multiplier} $m_\phi$.

\begin{Prp}\label{PInv} Let  $j\in B^-$ be conjugate invertible with $\mu=j^2\in R^*1_B$, and set $Q=Q(B,j)$.
\begin{enumerate}[(i)]
\item For each $R$-ring $S$, every $\phi\in\bStr(B)(S)$ is a similitude of $Q_S$  with multiplier $m_\phi=\tfrac12\mu^{-1}b_j(\phi(j),\phi(1))$.
\item The groups $\bSim(Q)$ and $\bStr(B)$ coincide.
    \item The group $\bInv(Q)$ is the closed subgroup of $\bStr(B)$ satisfying
    \[\bInv(Q)(S)=\{\phi\in\bStr(B)(S)\mid \forall x,y\in B_S: b_j(\phi(x),\phi(y))=b_j(x,y)\}\]
for each $R$-ring $S$. Moreover, the condition on $\phi\in\bStr(B)(S)$ that
\[\forall x,y\in B_S: b_j(\phi(x),\phi(y))=b_j(x,y)\]
is equivalent to $b_j(\phi(j),\phi(1))=b_j(j,1)$ and to $b_j(\phi(j),\phi(1))=2\mu$.
\item Each $\phi\in \bInv(Q)(S)$ satisfies $b_j(\widehat\phi(x),\widehat\phi(y))=b_j(x,y)$.
\end{enumerate}
\end{Prp}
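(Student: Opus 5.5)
Throughout, I would use the graded isomorphism $\bStr(B)\simeq\bAutgr(L(B))$ from Proposition \ref{Pstrgr}, which lets me work in $L=L(B)$. Given $\phi\in\bStr(B)(S)$, let $\Phi$ be the corresponding graded automorphism with $\Phi_1=\phi$, $\Phi_{-1}=\widehat\phi$. Then $\Phi_{\pm2}$ acts on $L_{\pm2}=B^-_S=Sj$ by a unit, say $\Phi_2(j)=\nu j$ and $\Phi_{-2}(j)=\nu' j$.

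For (i), I would use the bracket $L_1\times L_1\to L_2$, $[x,y]=\mu^{-1}b_j(x,y)j$. Applying $\Phi$ gives $b_j(\phi x,\phi y)=\nu\, b_j(x,y)$, so $b_j$ scales by $\nu$. Next comes the trilinear map. In \eqref{Etri}, $t_j$ is built from $V_{x,jy}z$ and the $b_j$-terms. The identity $\phi V_{x,jy}=V_{\phi x,\widehat\phi(jy)}\phi$ reduces the claim to showing $\widehat\phi(jy)=\nu\, j\phi(y)$, i.e.\ $\widehat\phi L_j=\nu L_j\phi$. I would get this from Lie brackets: $[s_+,x_-]=s_+x_-$ for $s_+\in L_2$, $x_-\in L_{-1}$ lands in $L_1$. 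Applying $\Phi$ yields $\phi(jy)=\nu j\widehat\phi(y)$. Replacing $y$ by $jy$ and using $j(jy)=\mu y$ gives $\mu\phi(y)=\nu j\widehat\phi(jy)$. Multiplying by $j$ then gives $\widehat\phi(jy)=\nu^{-1}\mu^{-1}\cdot\mu\, j\phi(y)\cdot(\dots)$.

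I expect this bookkeeping of constants to be the main obstacle. The scalar must come out so that $t_j(\phi x,\phi y,\phi z)=\nu\phi(t_j(x,y,z))$. The cleanest route is probably to pass to a faithfully flat cover, where $B$ is split and $j=\diag(1,-1)$, and to compare with the $L_{-2}\times L_1$ bracket as well. That comparison should give the relation $\nu\nu'=1$ from $[s_+,t_-]=V_{s_+t_-,1}$ and the grading element. The multiplier formula then follows by evaluating at $x=j$, $y=1$. Here $b_j(j,1)=(j-(-j))j=2\mu$, so $\nu=b_j(\phi j,\phi 1)/(2\mu)$.

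For (ii), (i) gives $\bStr(B)\subseteq\bSim(Q)$. For the reverse inclusion, take a similitude $\phi$ with multiplier $m$. From \eqref{Etri}, $2V_{x,jy}z$ equals $t_j(x,y,z)$ plus $b_j$-terms. Hence $\phi V_{x,y}=V_{\phi x,\psi y}\phi$ with $\psi=mL_j\phi L_j^{-1}$, which is bijective, so $\phi$ is an isotopy.

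For (iii), the description of $\bInv(Q)$ follows directly from (ii): multiplier $1$ forces $t_j$-invariance. The equivalences follow from the multiplier formula and $b_j(j,1)=2\mu$. Closedness holds because the condition is given by polynomial equations.

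For (iv), I would use $\widehat\phi=\nu L_j\phi L_j^{-1}$ with $\nu=1$. Then $b_j(jx,jy)=\mu b_j(x,y)$, which comes from Lemma \ref{Lj} or from the split computation. This gives $b_j(\widehat\phi x,\widehat\phi y)=b_j(x,y)$.
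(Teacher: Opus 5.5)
Your argument is sound in structure, but for (i) it takes a different route from the paper. The paper stays inside $B$. It quotes two Allison--Hein identities, $(xy^*-yx^*)w=V_{x,w}y-V_{y,w}x$ and $\phi(jw)=\tfrac12(\phi(j)\phi(1)^*-\phi(1)\phi(j)^*)\widehat\phi(w)$. It gets the scaling of $b_j$ by substituting $w=\widehat\phi^{-1}(j)$, and reads off $\phi(jy)=m_\phi j\widehat\phi(y)$ from the second identity.

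You instead extend $\phi$ to a graded automorphism $\Phi$ of $L(B)_S$ via Proposition \ref{Pstrgr}. The multiplier is then simply the unit $\nu$ by which $\Phi$ acts on $L_2=Sj$. The $L_1\times L_1\to L_2$ bracket gives the scaling of $b_j$ at once. The $L_2\times L_{-1}\to L_1$ bracket gives $\phi(jy)=\nu j\widehat\phi(y)$, which is exactly the content of the second Allison--Hein identity (compare the formula for $\iota(\phi)$ on $L_2$ used in Proposition \ref{Pstab}).

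Your version is more conceptual and gives $m_\phi\in S^*$ for free. The cost is that (i) now depends on Proposition \ref{Pstrgr}, hence on smoothness of $\bAutgr(L)$, the fibrewise isomorphism criterion and the field case of Allison--Hein. The paper's computation is an identity-level argument valid for any autotopy. Parts (ii)--(iv) follow the paper.

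The obstacle you anticipate in (i) comes from misstating the target. Since $\nu V_{\phi(x),\widehat\phi(jy)}=V_{\phi(x),\nu\widehat\phi(jy)}$, what you need is $\nu\widehat\phi(jy)=j\phi(y)$, i.e.\ $\widehat\phi L_j=\nu^{-1}L_j\phi$. The relation $\widehat\phi L_j=\nu L_j\phi$ you wrote would force $\nu^2=1$. Your own computation already gives the correct relation: left-multiplying $\mu\phi(y)=\nu j\widehat\phi(jy)$ by $j$ yields $\mu j\phi(y)=\nu\mu\widehat\phi(jy)$. So (i) is finished at that point, and no faithfully flat cover is needed. The relation $\nu\nu'=1$, though true, relates $\nu$ to $\nu'$ and could never turn $\nu^{-1}$ into $\nu$.

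The same slip recurs in (ii). From $m\phi V_{x,jy}z=V_{\phi(x),j\phi(y)}\phi(z)$ one gets $\psi=m^{-1}L_j\phi L_j^{-1}$, i.e.\ $\psi(w)=(m\mu)^{-1}j\phi(jw)$, as in the paper.

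In (iv), the correct identity is $b_j(jx,jy)=-\mu\, b_j(x,y)$. For instance, (i) applied to the autotopy $L_j$ gives multiplier $\tfrac12\mu^{-1}b_j(\mu,j)=-\mu$. The sign enters twice and cancels, so your conclusion is unaffected.

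Incidentally, $\nu\nu'=1$ does have a use in your framework. The $L_{-1}\times L_{-1}\to L_{-2}$ bracket shows that $\widehat\phi=\Phi_{-1}$ scales $b_j$ by $\nu'$. Applying $\Phi$ to $[1_+,1_-]=\mu V_{1,1}$ and evaluating on $1_+$ gives $\nu\nu'=1$, so $\nu=1$ forces $\nu'=1$ and (iv) follows directly.
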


\begin{proof}

To prove (i), we use two identities from \cite[(61)-(62)]{AH}, namely
\[(xy^*-yx^*)w= V_{x,w}y-V_{y,w}x\]
and
\[\phi(jw)=\tfrac12(\phi(j)\phi(1)^*-\phi(1)\phi(j)^*)\widehat\phi(w)
\]
for all $x,y,w\in B_S$. From the definition of $b_j$, it moreover follows that 
\[xy^*-yx^*=\mu^{-1}b(x,y)j.\]
Let $\phi\in\bStr(B)(S)$ and let $w=\widehat\phi^{-1}(j)$. Using the above identities gives
\[\begin{array}{rcl}
b_j(\phi(x),\phi(y))&=&(\phi(x)\phi(y)^*-\phi(y)\phi(x)^*)j\\
&=& V_{\phi(x),j}\phi(y)-V_{\phi(y),j}\phi(x)\\
&=& \phi(V_{x,w}y-V_{y,w}x)\\
&=& \phi((xy^*-yx^*)w)\\
&=& \phi(\mu^{-1}b(x,y)jw)\\
&=& \tfrac12\mu^{-1}b(x,y)(\phi(j)\phi(1)^*-\phi(1)\phi(j)^*)\widehat\phi(w)\\
&=& \tfrac12\mu^{-1}b(x,y)(\phi(j)\phi(1)^*-\phi(1)\phi(j)^*)j\\
&=& \tfrac12\mu^{-1}b(x,y)b_j(\phi(j),\phi(1)).\\
\end{array}\]
This proves the $b_j$-part of being a similitude with $m_\phi=\tfrac12\mu^{-1}b_j(\phi(j),\phi(1))$. From this and the definition of $t_j$, it follows that $\phi$ is a similitude provided that
\[m_\phi\phi V_{x,jy}z=V_{\phi(x),j\phi(y)}\phi(z).\]
Let us show this. Since $\phi$ is an autotopy, we know that 
\[\phi V_{x,jy}z=V_{\phi(x),\widehat\phi(jy)}\phi(z).\]
Thus we need to show that $j\phi(y)=m_\phi\widehat\phi(jy)$ for all $y$. Replacing $y$ by $jy$, multiplying by $j$ from the left, and using $j^2=\mu$, we see that this is equivalent to $\phi(jy)=m_\phi j\widehat\phi(y)$. By the above identities from \cite{AH},
\[\phi(jy)=\tfrac12(\phi(j)\phi(1)^*-\phi(1)\phi(j)^*)\widehat\phi(y).\]
Now,
\[  (\phi(j)\phi(1)^*-\phi(1)\phi(j)^*)j= b_j(\phi(j),\phi(1))=2\mu m_\phi.\]
Right multiplication by $\tfrac12\mu^{-1}j$ thus implies $\tfrac12(\phi(j)\phi(1)^*-\phi(1))=m_\phi j$, so $\phi(jy)=m_\phi j\widehat\phi(y)$ as desired. This proves (i).

To prove (ii), it thus remains to prove the inclusion $\bSim(Q)(S)\subseteq \bStr(B)(S)$. If $\phi$ is a similitude of $Q_S$ with multiplier $m_\phi$, then using the definition of $t_j$,
\[\begin{array}{rcl}
    2m_\phi\phi(V_{x,jy}z)&=&m_\phi\phi(t_j(x,y,z))+m_\phi b_j(y,z)\phi(x)\\
    &+&m_\phi b_j(y,x)\phi(z)+m_\phi b_j(x,z)\phi(y)\\
    &=& t_j(\phi(x),\phi(y),\phi(z))+b_j(\phi(y),\phi(z))\phi(x)\\
    &+& b_j(\phi(y),\phi(x))\phi(z)+b_j(\phi(x),\phi(z))\phi(y)\\
    &=& 2V_{\phi(x),j\phi(y)}\phi(z).
\end{array}\]
Defining $\psi$ by $\psi(w)=(m_\phi\mu)^{-1}j\phi(jw)$ we get $j\phi(y)=m_\phi\psi(jy)$. The above then implies that $\phi$ is an autotopy with $\widehat\phi=\psi$. This proves (ii).

Combining (i) and (ii), an element $\phi\in\bStr(B)(S)=\bSim(Q)(S)$ is in $\bInv(Q)(S)$ if and only if $m_\phi=1$, which happens if and only if $\phi$ is an isometry of $b_j$. This proves (iii), where the last part follows from $b_j(j,1)=2\mu$.

To prove (iv), we use, on the one hand, the equality $j\widehat\phi(y)=\phi(jy)$ from above, which is equivalent to $\widehat\phi(y)=\mu^{-1}j\phi(jy)$, and on the other hand, the equality $b_j(jx,jy)=-\mu b_j(x,y)$, which follows from \cite[Lemma 11.2]{AH}. We get
\[\begin{array}{rcl}
b_j(\widehat\phi(x),\widehat\phi(y))&=&\mu^{-2}b_j(j\phi(jx),j\phi(jy))\\
&=& -\mu^{-1}b_j(\phi(jx),\phi(jy))\\
&=& -\mu^{-1}b_j(jx,jy)\\
&=& b_j(x,y),
\end{array}\]
as desired.
\end{proof}

Note that for any $R$-ring $S$ and any $\phi,\psi\in\bStr(B)(S)$ we have
\[m_{\psi\phi}b(x,y)=b_j(\psi\phi(x),\psi\phi(y))=m_{\psi}b(\phi(x),(y))=m_{\psi}m_{\phi}b(x,y)\]
for all $x,y\in B_S$. Thus we have a group homomorphism
\[m:\bStr(B)\to\Gm, \quad \phi\mapsto m_{\phi}\]
and $\bInv(Q)=\ker\ m$. If $B$ is reduced, this map admits a section, as the following proposition shows. The notation is as in Example \ref{Ereduced}

\begin{Prp} Let $B$ be a reduced Brown algebra, set $j=\diag(1,-1)\in B^-$ and let $Q=Q(B,j)$. The sequence
\[\bone\to\bInv(Q)\to\bStr(B)\overset{m}\rightarrow\Gm\to\bone\]
is a split exact sequence of affine group schemes.    
\end{Prp}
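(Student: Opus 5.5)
We plan to verify exactness at each spot and then exhibit an explicit section $\sigma:\Gm\to\bStr(B)$ with $m\circ\sigma=\Id$. Exactness on the left and in the middle is already in hand. The inclusion $\bInv(Q)\to\bStr(B)$ is a closed immersion by Proposition \ref{PInv}(iii). That same result identifies $\bInv(Q)$ with $\ker m$, as noted just before the statement. Once a section $\sigma$ exists, $m$ is surjective on $S$-points for every $R$-ring $S$, so it is in particular an fppf epimorphism. Hence the whole statement reduces to constructing $\sigma$.

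For the section, we will take the block-diagonal map
\[\sigma(\lambda)\begin{pmatrix} r_1&a_1\\ a_2&r_2\end{pmatrix}=\begin{pmatrix} \lambda^{2} r_1&\lambda a_1\\ a_2&\lambda^{-1}r_2\end{pmatrix}\]
for $\lambda\in S^*$, or a similar weighting adjusted so that the multiplier comes out as $\lambda$. The guiding idea is that $\sigma(\lambda)$ should come from a cocharacter of the torus scaling the four blocks of $B$. We will check that $\sigma(\lambda)$ is a similitude of $Q=Q(B,j)$ directly from the explicit formulas in Example \ref{Ereduced}, since by Proposition \ref{PInv}(ii) it suffices to check membership in $\bSim(Q)$.

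The bilinear part is immediate. The form is
\[b(x,x')=r_1r_2'-r_2r_1'+T(a_1,a_2')-T(a_2,a_1'),\]
and it pairs the $r_1$-block with the $r_2$-block and the $a_1$-block with the $a_2$-block. It is therefore scaled by $\lambda$ exactly when the weights $(w_1,w_{a_1},w_{a_2},w_2)$ of the four blocks satisfy
\[w_1+w_2=w_{a_1}+w_{a_2}=1.\]
The trilinear condition $t_j(\sigma x,\sigma y,\sigma z)=\lambda\,\sigma t_j(x,y,z)$ is then checked using the multiplication of $B$. The multiplication involves $T$, the cross product $\times$ and scalar actions, and each of these has a definite homogeneity. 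For the cross product, $a_2\times a_2'$ lands in the $a_1$-slot and $a_1\times a_1'$ lands in the $a_2$-slot. Tracking weights through $V_{x,jy}z$ gives the additional linear constraints
\[2w_{a_2}-w_{a_1}=\text{const},\qquad 2w_{a_1}-w_{a_2}=\text{const}.\]
A cleaner alternative is to use $\phi=(r_1,a_1,a_2,r_2)\mapsto(\lambda r_1,\lambda a_1,a_2,r_2)$. Here we would check the conditions against the known description of $\bStr(B)$ for reduced $B$ as containing $\Gm\times\bStr(A)$-type elements, and if needed compose with scalar multiplication. Scalar multiplication by $c$ is an autotopy with multiplier $c^2$.

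The main obstacle is getting the weights consistent with the trilinear map $t_j$, that is, with all the terms of the multiplication at once. The safest route is to verify that $\sigma(\lambda)$ is an autotopy of $B$ directly, by exhibiting $\widehat{\sigma(\lambda)}$. The natural guess is $\widehat{\sigma(\lambda)}=(m\,\mu)^{-1}L_j\sigma(\lambda)L_j$, as in the proof of Proposition \ref{PInv}(ii). One then checks $\sigma V_{x,y}=V_{\sigma x,\widehat\sigma y}\sigma$ on the homogeneous blocks, where the $V$-operators are given by Lemma \ref{lem:Vop} together with the easy cases $V_{B,1}$. Finally, we compute $m_{\sigma(\lambda)}=\tfrac12 b(\sigma(j),\sigma(1))$ from Proposition \ref{PInv}(i), which gives $\tfrac12\bigl(\lambda^{w_1+w_2}\cdot 2\bigr)=\lambda$. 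This confirms $m\circ\sigma=\Id$ and hence the splitting.
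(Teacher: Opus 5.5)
\textbf{There is a genuine gap: the section you write down is not in $\bStr(B)$.} Your reduction matches the paper's. Exactness at $\bInv(Q)$ and at $\bStr(B)$ is already given by Proposition~\ref{PInv}, so everything rests on exhibiting a homomorphism $\Gm\to\bStr(B)$ that splits $m$. Neither $\sigma(\lambda)$ nor your ``cleaner alternative'' is such a homomorphism.

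To see this, take $x=\begin{pmatrix}0&0\\ a&0\end{pmatrix}$. From the multiplication of $B$ one gets $jx=-x$ and $x(jx)^*=\begin{pmatrix}0&-a\times a\\ 0&0\end{pmatrix}$, and therefore
\[t_j(x,x,x)=2V_{x,jx}x=\begin{pmatrix}-2T(a\times a,a)&0\\ 0&0\end{pmatrix}=\begin{pmatrix}-12N(a)&0\\ 0&0\end{pmatrix}.\]
So the cubic term in the lower-left Albert slot lands in the $r_1$-slot; for $a=1$ the entry is the unit $-12$. Your $\sigma(\lambda)$ fixes $x$. It also scales $b_j$ by $\lambda$, and $b_j(\diag(1,0),\diag(0,1))=1$, so its multiplier as a similitude would have to be $\lambda$. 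The trilinear condition would then require
\[t_j(x,x,x)=\lambda\,\sigma(\lambda)\bigl(t_j(x,x,x)\bigr)=\lambda^3t_j(x,x,x),\]
which fails for $\lambda=t\in R[t,t^{-1}]^*$. Hence $\sigma(\lambda)\notin\bSim(Q)=\bStr(B)$. Your closing computation $m_{\sigma(\lambda)}=\lambda$ therefore proves nothing, since the formula in Proposition~\ref{PInv}(i) presupposes membership in $\bStr(B)$. The same test rules out $(\lambda r_1,\lambda a_1,a_2,r_2)$, which would need $\lambda^2=1$.

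\textbf{How to fix it.} The constraints you left as ``const'' are in fact determined. The computation above gives $3w_{a_2}=1+w_1$. The symmetric computation, with $x$ in the upper-right slot, shows its cube lands in the $r_2$-slot and gives $3w_{a_1}=1+w_2$. Add to these $w_1+w_2=w_{a_1}+w_{a_2}=1$. With $w_{a_1}=1$ and $w_{a_2}=0$, they force $w_1=-1$ and $w_2=2$, so your scalar weights are swapped. (Your scalar weights $(2,-1)$ are compatible with these constraints only with Albert weights $(0,1)$.) The correct choice is the section used in the paper, taken from \cite[Example 16.9]{GPR}:
\[\phi_\lambda\begin{pmatrix} r_1&a_1\\ a_2&r_2\end{pmatrix}=\begin{pmatrix}\lambda^{-1}r_1&\lambda a_1\\ a_2&\lambda^{2}r_2\end{pmatrix}.\]
For this map both $b_j$ and $t_j$ scale by $\lambda$. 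The definition of $t_j$ then gives $\phi_\lambda\in\bStr(B)$ with $\widehat{\phi_\lambda}=\lambda^{-1}L_j\phi_\lambda L_j$, and $m_{\phi_\lambda}=\lambda$. With this substitution the rest of your outline goes through.
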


\begin{proof} We have already established exactness at $\bInv(Q)$ and $\bStr(B)$. The remainder of the statement will follow by constructing a section $\Gm\to\bStr(B)$. Following \cite[Example 16.9]{GPR}, we define a group homomorphism $\Gm\to\bStr(B)$ as follows: for each $S\in\Ralg$, it maps $\lambda\in S^*$ to
\[\phi_\lambda:B\to B,\quad \begin{pmatrix}
 r_1&a_1\\ a_2& r_2   
\end{pmatrix}\mapsto \begin{pmatrix}
 \lambda^{-1}r_1&\lambda a_1\\ a_2& \lambda^2r_2   
\end{pmatrix}.\]
From  \cite[Example 16.9]{GPR} we know that $b_j(\phi_\lambda(x),\phi_\lambda(y))=\lambda b_j(x,y)$ and \linebreak $t_j(\phi_\lambda(x),\phi_\lambda(y),\phi_\lambda(z))=\lambda\phi_\lambda(t_j(x,y,z))$. From the definition of $t_j$ it follows that
\[\phi_\lambda(V_{x,y}z)=V_{\phi_\lambda(x),\psi(y)}\phi_\lambda z)\quad \text{where}\quad \psi(y)=\lambda^{-1}j\phi_\lambda(jy).\]
Thus $\phi_\lambda\in\bStr(B)(S)$ and $m_{\phi_\lambda}=\lambda$ as desired.
\end{proof}

This has the following consequence.

\begin{Cor}\label{Ctrivker} Given a reduced Brown algebra $B$, $j=\diag(1,-1)\in B^-$ and $Q=Q(B,j)$, let $f:H^1_{\mathrm{fppf}}(R,\bInv(Q))\to H^1_{\mathrm{fppf}}(R,\bStr(B))$ be induced by the inclusion $\bInv(Q)\to\bStr(B)$. Then the kernel of $f$ is trivial.
\end{Cor}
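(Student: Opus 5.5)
The plan is to use the long exact sequence in fppf cohomology attached to the split short exact sequence of the preceding proposition,
\[\bone\to\bInv(Q)\to\bStr(B)\overset{m}\rightarrow\Gm\to\bone .\]
Since $\bInv(Q)=\ker m$ is a closed normal subgroup scheme and $m$ is an epimorphism of fppf sheaves (it even has a section $\lambda\mapsto\phi_\lambda$), non-abelian cohomology gives an exact sequence of pointed sets
\[\bStr(B)(R)\overset{m}\longrightarrow R^*\overset{\delta}\longrightarrow H^1_{\mathrm{fppf}}(R,\bInv(Q))\overset{f}\longrightarrow H^1_{\mathrm{fppf}}(R,\bStr(B)).\]
Exactness at $H^1_{\mathrm{fppf}}(R,\bInv(Q))$ means that $\ker f$ is the image of the connecting map $\delta$. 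It therefore suffices to show that $\delta$ has trivial image, and for this it is enough that $m\colon\bStr(B)(R)\to R^*$ is surjective.

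Surjectivity holds because of the section. Given $\lambda\in R^*=\Gm(R)$, the element $\phi_\lambda\in\bStr(B)(R)$ satisfies $m_{\phi_\lambda}=\lambda$. So $m$ is surjective on $R$-points, $\delta$ is constant with value the trivial class, and $\ker f$ is trivial.

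If one prefers not to cite the long exact sequence, the same argument can be made directly. Let $P$ be a $\bInv(Q)$-torsor whose pushforward $P\wedge^{\bInv(Q)}\bStr(B)$ is trivial. A trivialization gives a $\bInv(Q)$-equivariant map $P\to\bStr(B)$, and composing with $m$ yields a $\bInv(Q)$-invariant map, that is, an element $\lambda\in R^*$. Multiplying the trivialization by $\phi_{\lambda}^{-1}$ makes this map land in $\ker m=\bInv(Q)$, which gives a section of $P$.

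The only point needing care is that the sequence is exact as fppf sheaves, so that the long exact sequence applies. The key input is surjectivity of $m$ as a map of sheaves, and this is immediate from the section. I expect no real obstacle beyond invoking the standard exact sequence in non-abelian fppf cohomology for a normal subgroup, for instance from Giraud.
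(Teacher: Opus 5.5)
Your proposal is correct and is essentially the paper's argument. The paper identifies $\bStr(B)/\bInv(Q)\simeq\Gm$ and applies Gille's Proposition 2.4.3, by which $\ker f$ is in bijection with the $\bStr(B)(R)$-orbits on $\Gm(R)$; there is a single orbit because of the section $\lambda\mapsto\phi_\lambda$, which is exactly your exact-sequence argument. Your observation that the section alone makes $m$ an fppf epimorphism, so that the quotient is $\Gm$ without appealing to smoothness, slightly streamlines the paper's justification.
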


\begin{proof} By the previous proposition, $\bStr(B)$ acts transitively on $\Gm$ by $\phi\cdot \lambda=m_\phi\lambda$. The stabilizer of $1\in\Gm(R)$ is $\bInv(Q)$. Since these group schemes are all smooth of finite presentation, it follows that $\bStr(B)/\bInv(Q)\simeq\Gm$.  Thus by \cite[Proposition 2.4.3]{Gil2}, $\ker f$ is in bijection with the set of $\bStr(B)(R)$-orbits of $\Gm(R)$, which is a singleton set by transitivity. This completes the proof.
\end{proof} 

Summing up the earlier results, we have the following chain of inclusions of closed subgroups whenever $j\in B^-$ is conjugate invertible:
\[\bInv(Q(B,j))\to\bSim(Q(B,j))=\bStr(B)\overset\sim\to \bAutgr(L(B))\to\bAut(L(B)).\]

We will mainly be interested in the homogeneous space $\bAut(L(B)/\bInv(Q(B,j))$, viewed as an fppf-quotient, and the twists by the torsors that it defines. To parametrize this quotient, we will construct a scheme using extremal elements. 

\section{Extremal elements}
We start by establishing the basics of extremal elements in Lie algebras over rings. We shall require these elements to be unimodular; recall that an element $x$ in an $R$-module $M$ is \emph{unimodular} if there is a linear form $\alpha:M\to R$ such that $\alpha(m)=1$. This requirement avoids degenerate cases and is, in this setting, the natural generalization to rings of the property of being non-zero over fields.

\begin{Def}\label{Dextremal} Let $L$ be a Lie algebra over $R$.
\begin{enumerate}[(i)]
    \item An $R$-submodule $I\subseteq L$ is said to be an \emph{inner ideal} of $L$ if $[I,[I,L]]\subseteq I$.
    \item An element $x\in L$ is said to be \emph{extremal} if $x$ is unimodular and $Rx$ is an inner ideal of $L$.
\end{enumerate}
\end{Def}

Thus a unimodular element $x\in L$ is extremal if and only if for each $y\in L$ there exists $r=r_x(y) \in R$ such that $[x,[x,y]]=r_x(y)x$. This $r$ is unique, for if $rx=r'x$, then $r-r'=0$ since it annihilates the unimodular element $x$. For each extremal element $x\in L$ we thus have a linear map $r_x:L\to R$.  Clearly $r_x(x)=0$. 

\begin{Ex} If $L=\sll_2(R)$, with its usual Chevalley basis $(e,f,h)$, then $e$ and $f$ are extremal, and we have
\[r_e(f)=r_f(e)=-2 \qquad\text{and}\qquad r_e(h)=r_f(h)=0.\]
\end{Ex}

\begin{Rk}\label{Rexiso} If $x\in L$ is extremal and $\phi:L\to L'$ is an isomorphism of Lie algebras, then $\phi(x)$ is unimodular since $x$ is, and satisfies
\[[\phi(x),[\phi(x),y]]=\phi([x,[x,\phi^{-1}(y)]])=\phi(r_x(\phi^{-1}(y))x)=r_x(\phi^{-1}(y))\phi(x)\]
for all $y\in L'$. It follows that $\phi(x)$ is extremal with $r_{\phi(x)}=r_x\phi^{-1}$.
\end{Rk}

Extremal elements in a Lie algebra $L$ are stable under base change and define a set-valued functor as follows.

\begin{Lma} Let $L$ be an $R$-Lie algebra whose underlying module is finitely generated. For each $S\in\Ralg$, set
\[\bX^L(S)=\{x\in L_S \mid x \text{\ is an extremal element of\ } L_S\}.\]
Then $\bX^L$ is a full subfunctor of the set-valued $R$-functor $\bL$ that is defined by $S\mapsto L_S$ on objects and $\rho\mapsto \Id_L\otimes \rho$ on morphisms $\rho:S\to T$.
\end{Lma}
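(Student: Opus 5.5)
The statement says two things. For every $R$-ring homomorphism $\rho:S\to T$, the map $\Id_L\otimes\rho:L_S\to L_T$ must send $\bX^L(S)$ into $\bX^L(T)$. Functoriality of the resulting assignment is then automatic, since it is just the restriction of $\bL$. So the plan is to fix $x\in\bX^L(S)$, set $x_T=(\Id_L\otimes\rho)(x)$, and check the two defining conditions of Definition \ref{Dextremal} for $x_T$ in $L_T$: unimodularity, and that $Tx_T$ is an inner ideal.

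\textbf{Unimodularity.} Choose an $S$-linear form $\alpha:L_S\to S$ with $\alpha(x)=1$. Its base change $\alpha_T=\alpha\otimes_S T:L_S\otimes_S T\to T$ satisfies $\alpha_T(x\otimes 1)=1$. Under the canonical identification $L_S\otimes_S T\cong L_T$, the element $x\otimes 1$ corresponds to $x_T$, so $x_T$ is unimodular.

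\textbf{Inner ideal.} Since $x$ is extremal, there is an $S$-linear map $r_x:L_S\to S$ with $[x,[x,y]]=r_x(y)x$ for all $y\in L_S$. The natural candidate for $x_T$ is the base change $r_x\otimes_S T:L_T\to T$. To justify this, take $y'\in L_T$ and write it as a finite sum $y'=\sum_k t_k\, y_k\otimes 1$ with $y_k\in L_S$ and $t_k\in T$; this uses that $L_T=L_S\otimes_S T$ is generated as a $T$-module by the image of $L_S$. The bracket on $L_T$ is the $T$-bilinear extension of the bracket on $L_S$, so
\[[x_T,[x_T,y']]=\textstyle\sum_k t_k\,[x,[x,y_k]]\otimes 1=\sum_k t_k r_x(y_k)\,x_T\in Tx_T.\]
Hence $[x_T,[x_T,L_T]]\subseteq Tx_T$. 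Because $Tx_T$ is spanned by the single element $x_T$, bilinearity gives $[Tx_T,[Tx_T,L_T]]\subseteq Tx_T$, so $Tx_T$ is an inner ideal.

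\textbf{Remaining points.} It follows that $\bX^L(S)\to\bX^L(T)$ is well defined. Compatibility with identities and with compositions $S\to T\to U$ is inherited from $\bL$. Fullness in the sense of a subfunctor given pointwise by subsets is then immediate. There is no real obstacle here; the only point needing care is the identification $L_S\otimes_S T\cong L_T$ used for both conditions, together with the fact that this identification respects brackets, which holds because the Lie structure on every $L_S$ is defined by base change. Finite generation of $L$ is not needed for this argument and will presumably matter only later, for representability.
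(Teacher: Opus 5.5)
Your proof is correct and follows essentially the same route as the paper's: base change the relation $[x,[x,y]]=r_x(y)x$ along $S\to T$ by writing an element of $L_T$ as a $T$-linear combination of images of elements of $L_S$. The paper writes such elements using a fixed generating set of $L$, which is where it invokes finite generation, though as you note this hypothesis is not actually needed; you also explicitly verify unimodularity of $x_T$ via $\alpha\otimes_S T$, a step the paper leaves implicit.
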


\begin{proof} It suffices to show that if $S\in\Ralg$, $x\in L_S$ is extremal, and $T$ is an $S$-ring, then $x\otimes 1_T\in L_T$ is extremal. Let $e_1,\ldots,e_n$ be a set of generators of the $R$-module $L$. If $y\in L_T$, then $y=\Sigma_i e_i\otimes t_i\in L_T$, and so
\[[x\otimes 1,[x\otimes 1,y]]=\Sigma_i[x,[x, e_i]]\otimes t_i=\Sigma_ir_x(e_i)x\otimes t_i=\Sigma_ix\otimes r_x(e_i)t_i=t(x\otimes 1)\]
where $t=\Sigma_ir_x(e_i)t_i$. 
\end{proof}

\subsection{Extremal elements in TKK-algebras}
We now specialize to the case where $L$ is a TKK-algebra.

\begin{Ex}\label{Eextremal} If $L=L(B)$ is a TKK-algebra over $R$ containing a conjugate invertible element $j\in B^-$, we denote $j$ by $1_+$ in $L_2$ and by $1_-$ in $L_{-2}$, and similarly in any base change $L_S$ of $L$. In fact, $1_\pm\in\bX^L(S)$, by virtue of the $\Z$-grading: indeed, the only non-trivial case to check, when $x=1\pm$, is when $y=s_{\mp2}=s1_{\mp2}\in L_{\mp2}$. Then 
\[[x,[x,y]]=[1_\pm,[1_\pm,s_{\mp2}]]= \pm s[1_{\pm}, V_{\mu,1}]= -2\mu s_{\pm}.\]
\end{Ex}

 If $x$ is extremal and $y\in L$, then the scalar $r_x(y)$ is of interest. Over fields, we have the following from \cite[Lemma 2.4, Theorem 2.5, Proposition 3.3 and Proposition 9.14]{CSUW}.

\begin{Prp} Let $K$ be a field (of characteristic not 2 or 3) and $L$ a simple Lie algebra of Chevalley type over $K$. Then $L$ is spanned, as a $K$-vector space, by its extremal elements. Moreover, there is a unique symmetric bilinear form $r^K:L\times L\to K$ that satisfies $r^K(x,y)=r_x(y)$ whenever $x$ is extremal. The form $r^K$ is associative in the sense that $r^K([x,y],z)=r^K(x,[y,z])$, and regular.
\end{Prp}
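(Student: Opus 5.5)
This statement is quoted from \cite{CSUW}; in the paper it is not proved but assembled from the cited results. Still, here is how I would prove it directly.

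\textbf{Spanning.} I would use the Chevalley basis. Long root vectors $e_\alpha$ are extremal, since $[e_\alpha,[e_\alpha,L]]\subseteq Ke_\alpha$ by root string considerations. For the simply laced types this covers every root vector, because all roots are long. For the other types, every root vector is a sum of brackets of long root vectors, and in characteristic $\neq 2,3$ these sums can be arranged to produce the short root vectors. Each Cartan element $h_\alpha$ lies in $\langle e_\alpha,f_\alpha,e_\alpha+f_\alpha\rangle$ up to conjugacy. More precisely, $\exp(\ad_{e_\alpha})f_\alpha=f_\alpha+h_\alpha-e_\alpha$, and this element is extremal as the image of an extremal element under an automorphism (Remark~\ref{Rexiso}). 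Hence $h_\alpha$ is a linear combination of extremal elements, and $L$ is spanned by extremal elements.

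\textbf{Existence of $r^K$.} I would take $r^K=c\,\kappa'$, where $\kappa'$ is a suitably normalized nondegenerate invariant form. In the characteristics allowed, the normalized form, for example the one coming from the minimal representation or from $\tau$, is nondegenerate even where the Killing form degenerates. The key identity to prove is $[x,[x,y]]=c\,\kappa'(x,y)x$ for every extremal $x$.

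\begin{itemize}
\item First I would verify it for $x=e_\alpha$ with $\alpha$ long, by a root-by-root computation. The only contribution comes from $y$ having an $f_\alpha$-component, and there $[e_\alpha,[e_\alpha,f_\alpha]]=-2e_\alpha$, which fixes $c$.
\item Next, both sides transform equivariantly under $\mathrm{Aut}(L)$ (Remark~\ref{Rexiso}), so the identity holds on the whole orbit of $e_\alpha$.
\item The main obstacle is showing that every extremal element is, up to scalar, conjugate to a long root vector, possibly after passing to $\overline K$; descent is harmless because $r_x$ is $K$-rational. Here I would follow \cite{CSUW}: an extremal $x$ with $\ad_x^2\neq0$ generates an $\sll_2$ together with a partner $y$ satisfying $r_x(y)\ne0$. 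The characteristic $\neq 2,3$ hypothesis then rules out sandwich elements (those with $\ad_x^2=0$) in a simple Chevalley algebra, and a weight argument identifies the $\sll_2$ as a long root $\sll_2$.
\end{itemize}

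\textbf{Uniqueness, symmetry, associativity and regularity.} Uniqueness follows immediately from spanning: a bilinear form is determined by its values $r^K(x,\cdot)$ on a spanning set of extremal $x$. Symmetry, associativity and regularity are then inherited from $\kappa'$. As an alternative route to symmetry, I would check directly that $r_x(y)=r_y(x)$ for extremal $x,y$, using the identity $\ad_x^2\ad_y^2x=\dots$ from \cite{CSUW}. Regularity also follows from simplicity, since the radical of an associative form is an ideal, and $r^K\neq0$ because $r_{e_\alpha}(f_\alpha)=-2$.
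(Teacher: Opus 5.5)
There is a genuine gap in the existence step, at the point you yourself call the main obstacle. Everything rests on the claim that each extremal element is, over $\overline K$, a scalar multiple of a conjugate of a long root vector. In particular this requires that there are no nonzero sandwiches, and your sketch does not establish it. If $\ad_x^2=0$, expanding $\ad_{[x,[x,y]]}=0$ gives $\ad_x\ad_y\ad_x=0$, so $\ad_x\ad_y$ is nilpotent and the Killing form $\kappa(x,y)$ vanishes. But this says nothing about $\kappa'(x,y)$ when $\kappa$ degenerates. For $\E_8$ in characteristic $5$ one has $\kappa=60\tau=0$; this case is allowed by the statement and needed in the paper, where $5$ is not assumed invertible. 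The standard way to exclude sandwiches in a simple algebra goes through the extremal form itself: a sandwich lies in the radical of $r^K$, which is an ideal. So invoking it here is circular. The rest of the sketch is also thin. To obtain an $\sll_2$ from $x$ and a partner $y$ you need $y$ extremal with $r_y(x)\neq0$, which already uses the symmetry you treat as optional. Recognizing that $\sll_2$ as a long-root one is a statement about the minimal nilpotent orbit, which a bare weight argument does not give, especially at the bad prime $5$ of $\E_8$.

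The repair is the lemma you mention only as an aside, and it makes any classification unnecessary. For extremal $x,y$, evaluate $\ad_{[x,[x,y]]}=\ad_x^2\ad_y-2\ad_x\ad_y\ad_x+\ad_y\ad_x^2=r_x(y)\ad_x$ at $y$. This gives $[x,[y,[x,y]]]=-r_x(y)[x,y]$, while directly $[x,[y,[x,y]]]=-r_y(x)[x,y]$. Hence $r_x(y)=r_y(x)$; if $[x,y]=0$, both are $0$. Now let $y$ run over the conjugates of long root vectors under the elementary automorphisms $\exp(t\,\ad_{e_\beta})$. These preserve $\kappa'$ by associativity, since $6\in K^*$, and for such $y$ you have verified $r_y=c\,\kappa'(y,\cdot)$. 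Then $r_x(y)=r_y(x)=c\,\kappa'(x,y)$ for every extremal $x$. Since these $y$ span $L$, you get $r_x=c\,\kappa'(x,\cdot)$, sandwiches included.

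So repaired, your route is a genuine alternative to the one behind the paper's citation. In \cite{CSUW} the form is built intrinsically from symmetry plus spanning, and associativity and regularity are then proved. You instead inherit these from a known nondegenerate invariant form, at the cost of a root-vector check; for $\E_8$, $\tau$ from Proposition~\ref{Pregular} works in all allowed characteristics.

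Finally, your spanning argument fails outside the simply laced case. Long roots form a closed subsystem ($D_n\subset B_n$, $A_1^n\subset C_n$, $D_4\subset F_4$, $A_2\subset G_2$), so brackets of long root vectors never produce short root vectors. Short root vectors are instead linear combinations of conjugates of long ones. You obtain them by expanding $\exp(t\,\ad_{e_\beta})e_\alpha$ in $t$, with $\alpha$ long, $\beta$ short and $\alpha+\beta$ a root, and using a Vandermonde argument. For $\E_8$ this issue does not arise.
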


This allows us to express $r_x(y)$ in terms of $\tau(x,y)$ over fields and, more generally, over integral domains, as follows.

\begin{Prp}\label{Ptaur} Let $R$ be an integral domain, $L$ a TKK-algebra over $R$, $x\in \bX^L(R)$ and $y\in L$. Then $[x,[x,y]]=-2\tau(x,y)x$, i.e.\ $r_x(y)=-2\tau(x,y)$
\end{Prp}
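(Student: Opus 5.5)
The plan is to reduce to the case of an algebraically closed field, where the statement follows from the uniqueness of associative forms together with the result of \cite{CSUW} quoted just above, and then to descend back to $R$ using torsion-freeness.

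\emph{Reduction.} Let $K$ be the fraction field of $R$ and $\overline K$ an algebraic closure of $K$. Since $L$ is finitely generated projective, hence torsion free, the map $L\to L_{\overline K}$ is injective. Since $x$ is extremal, $x_{\overline K}$ is extremal in $L_{\overline K}$ by the base change lemma for $\bX^L$. Moreover $r_{x_{\overline K}}(y_{\overline K})$ is the image of $r_x(y)$, because $[x,[x,y]]=r_x(y)x$ base changes. The form $\tau$ is compatible with base change, so it suffices to prove $r_{x}(y)=-2\tau(x,y)$ over $\overline K$. Its characteristic is $0$ or at least $5$, because $6\in R^*$. Over $\overline K$, $L$ is graded isomorphic to the split algebra $L^s_{\overline K}=L(B^s_{\overline K})$. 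The form $\tau$ is transported along graded isomorphisms, since it is built from $\beta$, which is isomorphism invariant by Remark \ref{Rbeta}. We may therefore assume $L=L(B)$ with $B$ split over $\overline K$ and $j=\diag(1,-1)$, so $\mu=1$.

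\emph{Comparison of forms.} By Proposition \ref{PTKK}, $L$ is simple of type $\E_8$, hence of Chevalley type. The proposition from \cite{CSUW} quoted above gives a regular, symmetric, associative form $r^{\overline K}$ with $r^{\overline K}(x,y)=r_x(y)$ for extremal $x$. By Proposition \ref{Pregular}, $\tau$ is also regular, symmetric and associative. Associative bilinear forms on $L$ correspond to $L$-module maps $L\to L^*$, and $L^*\cong L$ via the regular form $\tau$. Hence they correspond to $\End_L(L)$. Since $L$ is a finite dimensional simple module over itself and $\overline K$ is algebraically closed, Schur's lemma gives $\End_L(L)=\overline K$. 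Thus $r^{\overline K}=c\,\tau$ for some scalar $c\in\overline K$.

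\emph{Normalization.} To find $c$, we evaluate both forms on $x=1_+$, which is extremal by Example \ref{Eextremal}, and $y=1_-$. Example \ref{Eextremal} gives $[1_+,[1_+,1_-]]=-2\mu\,1_+$, so $r_{1_+}(1_-)=-2\mu$. By Definition \ref{Dtau}, $\tau(1_+,1_-)=s_+t_-=jj=\mu$. Hence $c=-2$, which proves the claim over $\overline K$ and hence over $R$.

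The main obstacle I expect is bookkeeping rather than conceptual. First, we need $\tau$ to be meaningful for a general TKK-algebra, and to check that it is respected by the graded isomorphism to the split case. Second, the identification $L_{\pm2}\cong B^-$ must be used consistently, so that the sign in Example \ref{Eextremal} matches the term $s_+t_-$ in $\tau$. Schur's lemma and the descent from $\overline K$ to $R$ are routine.
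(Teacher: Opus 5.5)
Your proposal is correct and follows the paper's proof. Both arguments embed into an algebraically closed field, compare the form $r^{\overline K}$ from \cite{CSUW} with $\tau$ via proportionality of associative forms, fix the scalar $-2$ at $(1_+,1_-)$, and pull back along $R\hookrightarrow\overline K$; your Schur's lemma argument and reduction to the split case just make explicit what the paper leaves implicit. One point to tidy: by Lemma \ref{Lstructurable}, a graded isomorphism $L_{\overline K}\to L^s_{\overline K}$ comes from an isotopy rather than an isomorphism of Brown algebras, so its compatibility with $\tau$ should be justified by Proposition \ref{Ptaustable} (or by taking it to be induced from an isomorphism $B_{\overline K}\cong B^s_{\overline K}$), not by Remark \ref{Rbeta} alone.
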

\begin{proof}
    As $R$ is an integral domain, there is an embedding $R\hookrightarrow k$ with $k$ an algebraically closed field. In that case, $L_k$ is the Chevalley $k$-Lie algebra of type $E_8$. Thus the above proposition applies, and we have two regular, associative symmetric bilinear forms on $L_k$, namely $r^k$ and $\tau_k=\tau\otimes_R1_k$. Thus $r^k=\lambda\tau_k$ for some $\lambda\in k^*$. To determine $\lambda$, we set $x=1_+$ and $y=1_-$. Then from Example \ref{Eextremal} we have $[x,[x,y]]=-2\mu y$, so that $r^k(x,y)=-2\mu$, while $\tau_k(x,y)=j^2=\mu$. Thus $\lambda=-2$, and $r^k=-2\tau_k$ on $L_k$. Now let $x \in \mathbf{X}^L(R)$. Since $x$ is unimodular, $x\otimes 1$ is a non-zero extremal element in $L_k=L\otimes_R k$, so for all $y\in L$,
    \[r_x(y)\otimes 1=r_{x\otimes 1}(y\otimes1)=r^k(x\otimes 1,y\otimes1)=-2\tau_k(x\otimes 1,y\otimes 1)=-2\tau(x,y)\otimes 1,\]
    whence we have $r_x(y) = -2\tau(x,y)\in R$ for all $y\in L$ as desired.
\end{proof}

When $R$ is not an integral domain, the scalar multiple $-2$ is shifted by a nilpotent element, as the following lemma shows. Later on, we will be interested in certain pairs of extremal elements, and this lemma will be used to prove that for those elements, $r_x=-2\tau(x,-)$.

\begin{Lma}\label{Llambda} Let $x\in \bX^L(R)$. Then there exists $c\in L$ such that $r_x(c)\in R^*$ and for all $y\in L$, $r_x(y)=r_x(c)\tau(x,y)$. Thus there is a unique $\lambda_x\in R^*$ such that, for all $y\in L$, $[x,[x,y]]=\lambda_x\tau(x,y)x$.
\end{Lma}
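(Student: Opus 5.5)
The plan is to compare $r_x$ with $\tau(x,-)$ using only the associativity and symmetry of $\tau$ from Proposition \ref{Pregular}. Integral domains, via Proposition \ref{Ptaur}, will be used only to show that the relevant scalar is a unit.

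\textbf{Step 1: choice of $c$.} Since $\tau$ is regular (Proposition \ref{Pregular}), the map $L\to L^\vee$, $u\mapsto \tau(u,-)$, is an isomorphism. Unimodularity of $x$ gives a linear form $\alpha$ with $\alpha(x)=1$. Write $\alpha=\tau(-,c)$ for some $c\in L$; then $\tau(x,c)=1$. This $c$ is the candidate element.

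\textbf{Step 2: the identity $r_x(y)=r_x(c)\tau(x,y)$.} Apply $\tau(-,c)$ to the defining relation $[x,[x,y]]=r_x(y)x$. The left side becomes $\tau([x,[x,y]],c)$ and the right side becomes $r_x(y)\tau(x,c)=r_x(y)$. Now move brackets across using associativity, symmetry of $\tau$ and antisymmetry of the bracket:
\[
\tau([x,[x,y]],c)=-\tau([x,y],[x,c])=\tau([y,x],[x,c])=\tau(y,[x,[x,c]]).
\]
Substituting $[x,[x,c]]=r_x(c)x$ and using symmetry, this equals $r_x(c)\tau(x,y)$. Hence $r_x(y)=r_x(c)\tau(x,y)$ for all $y\in L$. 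This step is purely formal.

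\textbf{Step 3: $r_x(c)$ is a unit.} This is the main obstacle, since the formal argument gives no control over $r_x(c)$. It suffices to show $r_x(c)\notin\mathfrak p$ for every prime (equivalently, every maximal) ideal $\mathfrak p$ of $R$.
\begin{itemize}
\item Pass to $D=R/\mathfrak p$, an integral domain. The image $\bar x$ of $x$ in $L_D$ is still unimodular, hence extremal in the TKK-algebra $L_D$, by the lemma on $\bX^L$ and the base-change compatibility of TKK-algebras.
\item By uniqueness of $r$ for unimodular elements, $r_{\bar x}$ is the reduction of $r_x$.
\item Proposition \ref{Ptaur} gives $r_{\bar x}(\bar c)=-2\tau_D(\bar x,\bar c)=-2$.
\end{itemize}
So $r_x(c)\equiv -2 \pmod{\mathfrak p}$. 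Since $2\in R^*$, this is nonzero modulo $\mathfrak p$, so $r_x(c)\in R^*$.

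\textbf{Step 4: $\lambda_x$ and its uniqueness.} Set $\lambda_x=r_x(c)$. Then $[x,[x,y]]=\lambda_x\tau(x,y)x$ for all $y$. For uniqueness, if $\lambda'$ also works, then $(\lambda_x-\lambda')\tau(x,y)x=0$ for all $y$. Taking $y=c$ gives $(\lambda_x-\lambda')x=0$, and unimodularity of $x$ forces $\lambda'=\lambda_x$.
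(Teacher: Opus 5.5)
Your proof is correct and follows the paper's argument essentially step by step. You choose $c$ with $\tau(x,c)=1$ via regularity, move brackets across $\tau$ by symmetry and associativity to get $r_x(y)=r_x(c)\tau(x,y)$, apply Proposition~\ref{Ptaur} modulo primes to obtain $r_x(c)\equiv -2$, and settle uniqueness by taking $y=c$. The only cosmetic difference is the final unit argument: the paper says $r_x(c)+2$ is nilpotent, so $r_x(c)$ is a unit plus a nilpotent, whereas you show directly that $r_x(c)$ lies in no maximal ideal.
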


\begin{proof} By unimodularity of $x$ and regularity of $\tau$ there exists $c\in L$ such that $\tau(x,c)=1$. For any $y\in L$, we have, on the one hand
\[\tau([x,[x,y]],c)=\tau(r_x(y)x,c)=r_x(y)\tau(x,c)=r_x(y).\]
On the other hand, since $\tau$ is symmetric and associative, we have   
\[\tau([x,[x,y]],c)=\tau([c,x],[x,y])=\tau([[c,x],x],y)=\tau(r_x(c)x,y)=r_x(c)\tau(x,y).\]
Altogether, $r_x(y)=r_x(c)\tau(x,y)$ for all $y\in L$. Let $n=r_x(c)+2\tau(x,c)=r_x(c)+2$. Then by the previous proposition, $n(P)=0$ for every prime ideal $P$ in $R$, so $n$ is nilpotent. Since $2\in R^*$, this implies that $r_x(c)=n-2$ is invertible. This proves the first claim, and the existence part of the second claim. For uniqueness, assume that $\lambda_x\tau(x,y)x=\lambda'\tau(x,y)x$ for all $y$. Then with $y=c$ as above we get $\lambda_xx=\lambda'x$, which proves uniqueness since $x$ is unimodular.
\end{proof}

Notice how for integral domains, we did not need the associativity of $\tau$, and the fact that $\tau$ is associative then follows from the associativity of $r$. In the general case, however, we did need $\tau$ to be associative \emph{a priori}.

\subsection{Schemes of extremal elements}
Our goal is to define an affine scheme of certain pairs of extremal elements in $L$, starting from $\bX^L$ above. Towards this goal, we need a few intermediate steps. 

First, we define, for any $R$-ring $S$,
\[\widetilde\bX(S)=\{(x,\lambda)\in L_S\times S^* \mid F_{x,\lambda}:L_S\to L_S \text{\ is the zero map}\} \]
where $F_{x,\lambda}$ is the linear map defined by
\[F_{x,\lambda}(y)=[x,[x,y]]-\lambda\tau(x,y)x. \]
Note that if $x\in L_S$ is unimodular, then $x\in \widetilde\bX(S)$ if and only if $x$ is extremal with $r_x(y)=-\lambda\tau(x,y)$. By Lemma \ref{Llambda}, this holds for a unique value of $\lambda$.
\begin{Rk} Over fields where $30$ is invertible, the reader might wish to compare this to the Freudenthal map $x\times x:y\mapsto [x,[x,y]]+\tfrac{1}{30}\kappa(x,y)x$ (see \cite{YIS}), which agrees with $F_{x,-2}$ in view of Corollary \ref{Ckillingform}. More generally, if $S$ is an integral domain (where $5$ need not be invertible), $(x,\lambda)\in \widetilde\bX(S)$ only if $\lambda=-2$, in view of Proposition \ref{Ptaur}.
\end{Rk}

\begin{Lma} The assignment $S\mapsto \widetilde\bX(S)$ defines a full subfunctor of $\bL\times \Gm$. It is represented by an affine scheme over $R$.
\end{Lma}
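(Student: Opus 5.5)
The proof has two parts: functoriality and representability.

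For functoriality, let $\rho:S\to T$ be a morphism of $R$-rings and $(x,\lambda)\in\widetilde\bX(S)$. I will show that $(x_T,\rho(\lambda))\in\widetilde\bX(T)$. Clearly $\rho(\lambda)\in T^*$. Both the Lie bracket and $\tau$ are compatible with base change, so the $T$-linear map $F_{x_T,\rho(\lambda)}$ equals $F_{x,\lambda}\otimes_S T$, and this vanishes because $F_{x,\lambda}=0$. For a generating set $e_1,\dots,e_n$ of $L$ one can also check directly that
\[F_{x_T,\rho(\lambda)}(\textstyle\sum_i e_i\otimes t_i)=\sum_i F_{x,\lambda}(e_i)\otimes t_i=0.\]
Hence $\widetilde\bX$ is a subfunctor of $\bL\times\Gm$, and it is full by definition, since it is cut out by a condition on each $(\bL\times\Gm)(S)$.

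For representability, I first note that $\bL\times\Gm$ is an affine scheme. Since $L$ is finitely generated projective, $\bL$ is represented by $\mathrm{Spec}(\mathrm{Sym}_R(L^\vee))$, where $L^\vee=\mathrm{Hom}_R(L,R)$: indeed $\mathrm{Hom}_{R\text{-alg}}(\mathrm{Sym}(L^\vee),S)=\mathrm{Hom}_R(L^\vee,S)=L\otimes S$. Also $\Gm=\mathrm{Spec}(R[t,t^{-1}])$. It then suffices to show that $\widetilde\bX$ is a closed subfunctor, i.e.\ that it is cut out by an ideal.

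The assignment $(x,\lambda)\mapsto F_{x,\lambda}$ is a morphism of functors $\bL\times\Gm\to\mathbf{W}$, where $\mathbf{W}$ is the functor $S\mapsto\End_R(L)\otimes_R S=\End_S(L_S)$; the last equality uses that $L$ is finitely generated projective. This morphism is polynomial: quadratic in $x$ through $[x,[x,y]]$, and of degree one in $\lambda$ and two in $x$ through $\lambda\tau(x,y)x$. Concretely, $F$ corresponds to a single element $P\in\End_R(L)\otimes_R\mathrm{Sym}(L^\vee)[t,t^{-1}]$. Then $\widetilde\bX$ is the fibre of $F$ over the zero section, so it is represented by $\mathrm{Sym}(L^\vee)[t,t^{-1}]/I$. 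Here $I$ is the ideal generated by the images of $\varphi\otimes 1$ applied to $P$, for $\varphi$ ranging over $\End_R(L)^\vee$ (finitely many generators of this dual suffice). An $S$-point then kills $I$ exactly when $F_{x,\lambda}=0$.

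The main obstacle is making the identification of $\End_S(L_S)$ with $\End_R(L)\otimes S$ and the coordinate description rigorous when $L$ is projective but not free. If that becomes awkward, I will instead use that vanishing of an element of a finitely generated projective module $M\otimes S$ is tested by the functionals $\varphi\otimes S$ for $\varphi\in M^\vee$, since $M$ is a direct summand of a free module. Alternatively, one can argue Zariski-locally on $R$, where $L$ is free and the equations are explicit polynomial coordinates, and glue the resulting closed subschemes, because being a closed subfunctor is Zariski-local.
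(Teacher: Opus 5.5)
Your proposal is correct. The functoriality part is essentially the paper's argument: write $y=\sum_i e_i\otimes t_i$ and reduce to $F_{x,\lambda}(e_i)=0$.

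For representability you take a different route. The paper writes down explicit polynomial equations only when $L$ is free. It then handles general $L$ by passing to a faithfully flat $S$ over which $L_S$ is free and invoking descent, which tacitly uses that $\widetilde\bX$ is an fppf sheaf and that affine schemes descend effectively. You instead build the representing algebra globally and without choosing a basis, as $\mathrm{Sym}(L^\vee)[t,t^{-1}]/I$, where $I$ is the pullback along the universal $F$ of the ideal of the zero section of the vector bundle $S\mapsto \End_R(L)\otimes_R S$.

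Your route buys several things:
\begin{itemize}
\item It avoids descent altogether.
\item It exhibits $\widetilde\bX$ directly as a closed subscheme of $\bL\times\Gm$; the paper's text only asserts ``affine''.
\item It makes finite presentation over $R$ evident, since $\End_R(L)^\vee$ is finitely generated. The paper later relies on this when it says $\bY^L$ is locally of finite presentation.
\end{itemize}
The paper's route is shorter to write and gives concrete coordinate equations in the free case.

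The ``main obstacle'' you anticipate is not a real one. The two facts you need are that $\End_S(L_S)=\End_R(L)\otimes_R S$, and that elements of $M\otimes_R S$ are detected by the functionals $\varphi\otimes S$ with $\varphi\in M^\vee$, for $M$ finitely generated projective. Both follow at once from $M$ being a direct summand of a free module of finite rank. Also, by Yoneda you do not need to check degrees to know that $F$ is a morphism of schemes: naturality of $(x,\lambda)\mapsto F_{x,\lambda}$ in $S$ suffices.

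Your Zariski-local fallback is essentially the paper's argument, with fppf descent replaced by gluing.
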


\begin{proof} Recall that $L$ is a finitely generated $R$-module and fix a set of generators $e_1,\ldots e_n$. Then the $(e_i)_S=e_i\otimes_RS$ generate $L_S$ for any $S\in\Ralg$.

For functoriality, we need to prove that if $(x,\lambda)\in \widetilde\bX(S)$ and $T$ is an $S$-ring, then ($x_T,\lambda_T)\in\widetilde\bX(T)$, i.e.\ $F_{x_T,\lambda_T}(y)=0$ for all $y\in L_T$.  If $y\in L_T$, then writing $y=\sum_i (e_i)_S\otimes_S t_i$ we have
    \[F_{x_T,\lambda_T}(y)=\sum_i t_i\bigg([x\otimes_S 1,[x\otimes_S1,e_i\otimes_S 1]]-(\lambda\otimes_S1)\tau_T(x\otimes_S1, e_i\otimes_S 1)(x\otimes_S 1)\bigg),\]
    which is equal to $\sum_i t_i(F_{x,\lambda}((e_i)_S)\otimes_S 1)$ which is zero since $F_{x,\lambda}$ is the zero map.

    To show that it is an affine scheme, assume first that $L$ is a free module and that $e_1,\ldots, e_n$ is an $R$-basis. By linearity, $(x,\lambda)\in \widetilde\bX(S)$ if and only if $F_{x,\lambda}((e_i)_S)=0$ for all $i$. This is a system of polynomial equations, with coefficients in $R$, in $\lambda$ and the coordinates of $x$. Thus $\widetilde\bX$ is a closed subscheme of the affine scheme $\bL\times \Gm$, hence affine.

    In the general case, there exists a faithfully flat $R$-algebra $S$ such that $L_S$ is free, so that $\widetilde\bX_S$ is an affine scheme over $S$. Then $\widetilde\bX$ is an affine scheme over $R$ by descent.
\end{proof}

We will now construct what will be the base scheme for our torsor. We thus define $\bY=\bY^L$ and $\widetilde\bY$ by
\[\bY^L(S) \coloneqq \{(x,y) \in \bX^L(S)\times \bX^L(S) \mid r_x(y)=-2\}\]
and
\[\tY(S) \coloneqq \{((x,\lambda),(y,\nu)) \in \tX(S)\times \tX(S) \mid r_x(y)=-2\}.\]

Since $\bX^L$ and $\tX$ are affine schemes, so are $\bY^L$ and $\tY$, and if $((x,\lambda),(y,\nu))\in\tY(S)$, then Lemma \ref{Llambda} implies that $\tau(x,y)\in S^*$, and hence that $x$ and $y$ are unimodular. 

\begin{Ex} If $L=L(B)$ and $j\in B^-$ satisfies $j^2=\mu1_B$ with $\mu\in R^*$, then by Example \ref{Eextremal}, we have $(1_+,\mu^{-1}1_-)\in\bY^L(S)$.    
\end{Ex}

To show that these schemes are isomorphic, consider the natural map $p:\bL\times\Gm\to\bL$ defined, for each $R$-ring $S$, by $(x,\lambda)\mapsto x$.

\begin{Lma} The map $p\times p$ induces an isomorphism $\tY\to\bY^L$.
\end{Lma}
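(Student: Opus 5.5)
The plan is to show that $p\times p$ is a natural bijection $\tY(S)\to\bY^L(S)$ for every $R$-ring $S$. An isomorphism of functors between affine schemes is then an isomorphism of schemes by Yoneda.

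First, well-definedness. Let $((x,\lambda),(y,\nu))\in\tY(S)$. As noted after the definition of $\tY$, Lemma \ref{Llambda} gives $\tau(x,y)\in S^*$, so $x$ and $y$ are unimodular. Unimodularity of $x$ follows from $\tau(x,y)$ being a unit: take the linear form $\tau(-,y)\tau(x,y)^{-1}$. More basically, $r_x(y)=-\lambda\tau(x,y)=-2$ forces $\tau(x,y)=2\lambda^{-1}\in S^*$. Since $F_{x,\lambda}=0$, we get $[x,[x,z]]=\lambda\tau(x,z)x$, so $x$ is extremal with $r_x=-\lambda\tau(x,-)$. The same holds for $y$. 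Hence $(x,y)\in\bY^L(S)$. Naturality in $S$ is clear, because $p\times p$ is a restriction of a morphism of functors.

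Next, injectivity. If $((x,\lambda),(y,\nu))$ and $((x,\lambda'),(y,\nu'))$ both lie in $\tY(S)$, then $\lambda\tau(x,z)x=\lambda'\tau(x,z)x$ for all $z$. This is the uniqueness in Lemma \ref{Llambda}: take $z=c$ with $\tau(x,c)=1$ and use unimodularity of $x$. So $\lambda=\lambda'$, and likewise $\nu=\nu'$.

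Surjectivity is the step needing a little care. Given $(x,y)\in\bY^L(S)$, Lemma \ref{Llambda} applied over $S$ (to the TKK-algebra $L_S$, using Proposition \ref{Pregular} over $S$) provides $\lambda_x\in S^*$ with $[x,[x,z]]=\lambda_x\tau(x,z)x$ for all $z$. Thus $F_{x,\lambda_x}=0$ and $(x,\lambda_x)\in\tX(S)$; similarly $(y,\lambda_y)\in\tX(S)$. The condition $r_x(y)=-2$ is part of the data, so the pair lies in $\tY(S)$ and maps to $(x,y)$.

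One point deserves checking: the sign convention, since the paper writes $r_x=-\lambda\tau$ in one place and $[x,[x,y]]=\lambda\tau(x,y)x$ in another. This is harmless, because only the existence of a unit scalar matters. The inverse map $(x,y)\mapsto((x,\lambda_x),(y,\lambda_y))$ is natural because $\lambda_x$ is unique and uniqueness is stable under base change. This completes the isomorphism.
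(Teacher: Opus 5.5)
Your proposal is correct and follows essentially the same route as the paper: you check that $p\times p$ is bijective on $S$-points for every $S$, obtaining well-definedness from $F_{x,\lambda}=0$ together with the unit $\tau(x,y)$, and obtaining existence and uniqueness of the scalars from Lemma~\ref{Llambda} applied over $S$. You also derive unimodularity directly from $\tau(x,y)\in S^*$, which avoids invoking Lemma~\ref{Llambda} before extremality is known, and you note that the sign discrepancy between $F_{x,\lambda}$ and $r_x=-\lambda\tau(x,-)$ is harmless; both points are more careful than the paper's write-up.
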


\begin{proof} We need to show that $p\times p$ induces a bijection $\tY(S)\to\bY^L(S)$ for each $S\in\Ralg$. If $((x,\lambda),(y,\nu))\in\tY(S)$, then $x$ and $y$ are extremal,
so 
\[(p\times p)_S((x,\lambda),(y,\nu))\in\bY^L(S).\]
Conversely, if $(x,y)\in \bY^L(S)$, then $x$ and $y$ are extremal, so by Lemma \ref{Llambda}, there is a unique pair $(\lambda,\nu)\in S^*\times S^*$ such that $(x,\lambda), (y,\nu)\in\widetilde\bX(S)$. Thus there is a unique $((x,\lambda),(y,\nu))\in \tY(S)$ with $(p\times p)_S((x,\lambda),(y,\nu))=(x,y)$. This completes the proof.
\end{proof}

\section{The orbits of the $\bG$-action}
In this section, we will study the behaviour, under the action of $\bG=\bAut(L)$, of the schemes defined in the previous section. In the end, we will have shown that $\bY^L$ is smooth and parametrizes the quotient $\bG/\bInv(Q(B,j))$ when $L=L(B)$ and $j\in B^-$ is conjugate invertible.

\begin{Lma} The natural action of $\bG=\bAut(L)$ on $\bL$ induces an action on $\bX^L$ and an action on $\bY^L$.
\end{Lma}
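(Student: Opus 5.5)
The plan is to show that for each $R$-ring $S$ and each $\phi\in\bG(S)=\mathrm{Aut}(L_S)$, the map $\phi:L_S\to L_S$ sends $\bX^L(S)$ into itself and $\bY^L(S)$ into itself. Then we check that these set maps are natural in $S$. Once this is done, the action axioms (identity and compatibility with composition) are inherited from the action of $\bG$ on $\bL$, since $\bX^L$ and $\bY^L\subseteq \bX^L\times\bX^L$ are subfunctors of $\bL$ and $\bL\times\bL$, respectively.

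For $\bX^L$, the key input is Remark \ref{Rexiso}, applied over $S$ with $L'=L_S$. If $x\in L_S$ is extremal and $\phi\in\mathrm{Aut}(L_S)$, then $\phi(x)$ is unimodular, and it is extremal with $r_{\phi(x)}=r_x\circ\phi^{-1}$. So $\phi$ restricts to a map $\bX^L(S)\to\bX^L(S)$.

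Naturality comes next. Let $\rho:S\to T$ be a morphism of $R$-rings. The action map on $\bL$ satisfies $(\phi x)_T=\phi_T(x_T)$, because $\phi_T=\phi\otimes_S\Id_T$. Since $\bX^L$ is a full subfunctor of $\bL$ (the lemma above), the restricted maps are automatically compatible with base change.

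For $\bY^L$, take $(x,y)\in\bY^L(S)$, so $x,y\in\bX^L(S)$ and $r_x(y)=-2$. By the previous step, $\phi(x)$ and $\phi(y)$ are extremal. By Remark \ref{Rexiso},
\[r_{\phi(x)}(\phi(y))=r_x(\phi^{-1}\phi(y))=r_x(y)=-2.\]
Hence $(\phi(x),\phi(y))\in\bY^L(S)$. Naturality follows as before, using the diagonal action on $\bL\times\bL$.

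I expect no real obstacle here. The only point needing care is that $r_x$ is well defined, which holds because $x$ is unimodular. One could alternatively argue through $\tY$ using Proposition \ref{Ptaustable}: $\tau$ is $\bG$-invariant, so $F_{\phi(x),\lambda}=\phi F_{x,\lambda}\phi^{-1}$. Remark \ref{Rexiso} makes this detour unnecessary.
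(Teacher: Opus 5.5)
Your proposal is correct and follows the paper's proof. Both use Remark \ref{Rexiso} to see that $\phi(x)$ is extremal. Your identity $r_{\phi(x)}(\phi(y))=r_x(\phi^{-1}\phi(y))=-2$ is the paper's computation $[\phi(x),[\phi(x),\phi(y)]]=\phi([x,[x,y]])=-2\phi(x)$, packaged through $r_{\phi(x)}=r_x\phi^{-1}$. Your explicit check of naturality in $S$ is a detail the paper leaves implicit.
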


\begin{proof} Let $\phi\in\bG(S)$ and $x\in\bX^L(S)$. Then $\phi(x)\in\bX^L(S)$ by Remark \ref{Rexiso}. Next let $(x,y)\in\bY^L(S)$. Then $(\phi(x),\phi(y))\in \bX^L(S)\times\bX^L(S)$ and
\[[\phi(x),[\phi(x),\phi(y)]]=\phi([x,[x,y]])=\phi(-2x)=-2\phi(x),\]
i.e.\ $r_{\phi(x)}(\phi(y))=-2$. Thus  $(\phi(x),\phi(y))\in \bY^L(S)$.
\end{proof}

The next result depicts the situation over fields of characteristic not 2 or 3. 

\begin{Prp}\label{Trans} Assume that $k\in\Ralg$ is a field such that $L_k=L(B)$ for a Brown algebra $B$ over $k$, and let $\mu=j^2$ where $j$ spans $B^-$. Then $\bG(k)$ maps any $(a,b)\in\bY^L(k)$ to $(\lambda 1_+,(\lambda\mu)^{-1}1_-)$ for some $\lambda\in k^*$. Moreover, if $k$ is algebraically closed, then $\bG(k)$ acts transitively on $\bY^L(k)$.
\end{Prp}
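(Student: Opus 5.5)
Over a field $k$ the statement reduces to the classical theory of $5$-gradings determined by pairs of extremal elements. Given $(a,b)\in\bY^L(k)$, I set $h=[a,b]$. Since $a,b$ are extremal with $r_a(b)=-2$, and hence $\tau(a,b)=1$ by Proposition \ref{Ptaur} ($k$ is a domain), the triple $(a,h,b)$ spans an $\sll_2$-subalgebra. Indeed, $[a,[a,b]]=-2a$, and by symmetry of $r$ also $[b,[b,a]]=-2b$, so $[h,a]=2a$ and $[h,b]=-2b$. It then follows from \cite{CSUW} (or directly from extremality) that $\ad_h$ has eigenvalues in $\{-2,-1,0,1,2\}$, with $L_{\pm2}=ka$ and $kb$ respectively. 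Indeed, for extremal $x$ the map $\ad_x$ satisfies $\ad_x^3=0$ modulo scalar terms, which forces the $\sll_2$-weights to be at most $2$ in absolute value and the weight $\pm 2$ spaces to be one-dimensional. This is the standard extremal grading, with $\dim L_{\pm1}=56$ and $\dim L_0=134$ by comparison with dimensions over $\bar k$.

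The main step is to show that this $5$-grading is conjugate under $\bG(k)$ to the TKK-grading of $L_k=L(B)$, whose grading element is $z=V_{1,1}$. The TKK-grading itself arises in this way from the pair $(1_+,\mu^{-1}1_-)$: the element $[1_+,\mu^{-1}1_-]$ equals $\mu^{-1}V_{\mu,1}=V_{1,1}=z$. For the conjugacy I would invoke the classification of gradings over fields in \cite{Rig}, or equivalently \cite[Theorem]{CSUW}: two extremal $\sll_2$-pairs give isomorphic $5$-graded algebras, because the grading is determined by the structurable algebra $L_1$ together with $L_{-1}$, and these structurable algebras are isotopic Brown algebras. I expect this to be the main obstacle. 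Over a general field $k$ one must show that the Brown algebra $B'$ attached to $(a,b)$ is isomorphic to $B$, not merely a twisted form of it. I would try to realise $B'$ as an isotope of $B$ using the fact that $a$ lies in $L$ and decomposes along the TKK-grading, and then use that isotopic Brown algebras with free skew part yield graded-isomorphic TKK-algebras via a conjugation in $\bG(k)$. The fact that $\bY$ is ``one orbit up to $\lambda$'' reflects that only the torus $\chi$ acts on the pair.

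Once some $\phi\in\bG(k)$ maps $h$ to $z$, I would use Remark \ref{Rderivation}. The element $\phi(a)$ lies in $L_2=k1_+$, so $\phi(a)=\lambda 1_+$ with $\lambda\in k^*$, and similarly $\phi(b)=\nu 1_-$. The condition $r_{\phi(a)}(\phi(b))=-2$ together with Example \ref{Eextremal} gives $\lambda\nu\cdot(-2\mu)=-2$, hence $\nu=(\lambda\mu)^{-1}$. This proves the first claim.

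For transitivity when $k$ is algebraically closed, it remains to move $(\lambda1_+,(\lambda\mu)^{-1}1_-)$ to $(1_+,\mu^{-1}1_-)$. I would apply the graded automorphism given by the grading cocharacter $\chi(s)$ with $s^2=\lambda^{-1}$, which exists because $k$ is algebraically closed. It scales $L_2$ by $s^2$ and $L_{-2}$ by $s^{-2}$, and therefore does exactly this.
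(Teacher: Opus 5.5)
\textbf{There is a genuine gap.} Your reduction is correct as far as it goes. If some $\phi\in\bG(k)$ sends $h=[a,b]$ to $z=[1_+,\mu^{-1}1_-]=V_{1,1}$, then Lemma~\ref{Lderivation} puts $\phi(a)$ in $L_2=k1_+$ and $\phi(b)$ in $L_{-2}=k1_-$. Example~\ref{Eextremal} then forces $\nu=(\lambda\mu)^{-1}$, and $\chi(s)$ with $s^2=\lambda^{-1}$ finishes the algebraically closed case exactly as in the paper.

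The trouble is that the existence of such a $\phi$ is the whole content of the proposition, and you do not establish it. Your justification, that the structurable algebra attached to the $(a,b)$-grading is an isotope of $B$, restates the claim rather than proving it. By Lemma~\ref{Lstructurable} and Proposition~\ref{Pstrgr}, that isotopy is equivalent to the $(a,b)$-grading being graded isomorphic to the standard one, i.e.\ to the conjugacy you want. Nothing in your sketch uses that $k$ is a field. Yet the paper shows (Proposition~\ref{Pnontriv} and the last corollary) that over a ring this conjugacy fails, so some field-specific input is indispensable. The principle ``isomorphic TKK-algebras over a field are graded isomorphic'' is deduced in the paper \emph{from} this proposition, so you cannot appeal to it. Your count $\dim L_{\pm1}=56$, $\dim L_0=134$ ``by comparison over $\overline{k}$'' likewise presupposes the geometric conjugacy that the second half of the statement asserts.

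Your remark that $a$ decomposes along the TKK-grading points in the right direction, but the paper's actual mechanism is \cite[Lemmas 3.7 and 3.9]{DMM} on minimal inner ideals. Since $ka$ is a minimal inner ideal, Lemma 3.7 gives an automorphism after which the $L_2$-component of $a$ is nonzero. Lemma 3.9 then gives an automorphism carrying $ka$ onto $L_2$, so $a\mapsto\lambda1_+$. The second element is moved by an automorphism fixing $1_+$: the relation $[\lambda1_+,[\lambda1_+,z]]=-2\lambda1_+$ forces the $L_{-2}$-component of $z$ to be nonzero, and Lemma 3.9 with signs reversed sends $kz$ onto $L_{-2}$. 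Thus the paper moves $a$ and $b$ one at a time instead of conjugating the grading element directly.

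If you want to keep your grading-element strategy, you must supply a real conjugacy theorem. First show that the cocharacters of the two gradings are $\bG(\overline{k})$-conjugate. This follows because a dominant coweight taking the value $2$ on the highest root of $\E_8$, with one-dimensional $2$-eigenspace, must be the fundamental coweight of the adjoint node. Then use that $k$-cocharacters of a reductive $k$-group which are conjugate over $\overline{k}$ are conjugate over $k$. That is a consequence of Borel--Tits, since the centralizers are Levi subgroups of $k$-parabolics. As written, however, the central step of your proof is missing.
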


Note that when $k$ is algebraically closed, we have  $L_k\simeq L(B_k^s)$, where $B_k^s$ is the split Brown algebra over $k$.

\begin{proof} Let $(x,y)\in \bY^L(k)$. Since $x\in\bX^L(k)$, the $k$-span of $x$ is an inner ideal of $L_k$, hence necessarily a minimal inner ideal, which is non-zero since $x$ is unimodular. By \cite[Lemma 3.7]{DMM}, there is an automorphism $\phi\in\bG(k)$ such that the $+2$-degree component of $\phi(x)$ is non-zero. Then \cite[Lemma 3.9]{DMM} implies that there is $\psi\in\bG(k)$ mapping $k\phi(x)$ to $L_2$, and thus mapping $\phi(x)$ to $\lambda 1_+$ for some $\lambda\in k^*$. 

Altogether, this shows that $(x,y)$ is in the same orbit as $(\lambda1_+,z)\in\bY^L(k)$ for some $z\in \bX^L(k)$ with $[\lambda1_+,[\lambda1_+,z]]=-2\lambda1_+$. This latter condition in particular implies that the $-2$-graded component of $z$ is non-zero, so by \cite[Lemma 3.9]{DMM} (applied after switching signs), there is an automorphism fixing $1_+$ and mapping $kz$ to $L_{-2}$, thus mapping $z$ to $\lambda'1_-$ for some $\lambda'\in k^*$. Together with the above, this shows that $(x,y)$ is in the same orbit as $(\lambda1_+,\lambda'1_-)$, and this element satisfies $[\lambda1_+,[\lambda1_+,\lambda'1_-]]=-2\lambda1_+$. From Example \ref{Eextremal} we know that \[[\lambda1_+,[\lambda1_+,\lambda'1_-]]=-2\lambda^2\lambda'\mu1_+,\]
so altogether $\lambda'=(\lambda\mu)^{-1}$.  This proves the first statement.

If $k$ is algebraically closed, $B_k$ is split, so we may take $j$ with $\mu=j^2=1$. Then any $(x,y)\in \bY^L(k)$ is in the orbit of $(\lambda1_+,\lambda^{-1}1_-)$ for some $\lambda\in k^*$, and $\lambda^{-1}=\nu^2$  for some $\nu\in k^*$. The grading cocharacter $\chi$ then provides us with an automorphism $\chi(\nu)$ of $L_k$ mapping $(\lambda1_+,\lambda^{-1}1_-)$ to $(1_+,1_-)$. This completes the proof.
\end{proof}

Next we will prove that over any algebraically closed field $k$, $\bY^L$ is smooth. First, we consider the principal open subschemes $\bX^+$ and $\bX^-$ of $\bX^L$  (over an arbitrary ring $R$) corresponding to the linear (hence polynomial) functions $r_{1_+}$ and $r_{1-}$, respectively. In other words, for all $R$-rings $S$,
\[\bX^\pm(S)=\{x\in\bX^L(S) \mid r_{1_\pm}(x)\in S^*\}.\]

Since $k$ is algebraically closed, the subspace $B^-$ is spanned by $j$ with $j^2=1$. We shall use this in Proposition \ref{Psmooth} and Corollary \ref{Csmooth}.

\begin{Prp}\label{Psmooth} Over any algebraically closed field $k$ of characteristic not 2 or 3, the schemes $\bX^\pm$ are affine open subschemes of $\bX^L$ that are isomorphic to $\mathbb A^{57}\times \Gm$. In particular, they are smooth.
\end{Prp}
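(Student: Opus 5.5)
Over $k$ algebraically closed I take $L=L(B)$ with $B$ split and $j$ satisfying $j^2=1$, so $\tau(1_+,1_-)=1$. By symmetry (the automorphism exchanging degrees $\pm i$, or simply repeating the argument with signs switched) it suffices to treat $\bX^+$. Since $r_{1_+}(x)=-2\tau(1_+,x)$ by Proposition \ref{Ptaur}, and $\tau(1_+,-)$ only sees the $(-2)$-component of $x$, the condition $r_{1_+}(x)\in S^*$ means that the $L_{-2}$-coordinate $s$ of $x=s1_-+x_{-1}+x_0+x_1+t1_+$ is a unit. The plan is to show that such an extremal $x$ is exactly the image of $s1_-$ under a unipotent automorphism $\exp(\ad_u)$, where $u$ ranges over $L_1\oplus L_2$, which is a 57-dimensional affine space. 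This gives a morphism
\[\mathbb A^{57}\times\Gm\to\bX^+,\qquad (u,s)\mapsto \exp(\ad_u)(s1_-).\]

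\textbf{Step 1 (well-defined morphism).} For $u=(a,c)\in L_1\oplus L_2$, the operator $\ad_u$ raises degree by at least $1$, so it is nilpotent with $\ad_u^5=0$. Hence $\exp(\ad_u)$ is a polynomial automorphism of $L_S$. Here one uses $6\in R^*$, together with the observation that the degree-$4$ term only hits $L_{-2}\to L_2$ and can be handled by an explicit formula (or by composing $\exp(\ad_c)\exp(\ad_a)$, each of which has lower nilpotency). Since $s1_-$ is extremal (Example \ref{Eextremal}) and automorphisms preserve extremality (Remark \ref{Rexiso}), the image lies in $\bX^L(S)$. Its $L_{-2}$-component is $s1_-$, so it lies in $\bX^+(S)$.

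\textbf{Step 2 (inverse).} Given $x\in\bX^+(S)$ with $(-2)$-coordinate $s\in S^*$, I will recover $u$ degree by degree. The $L_{-1}$-coordinate of $\exp(\ad_u)(s1_-)$ is $s[a,1_-]=\pm s\,ja$, which determines $a$ because left multiplication by $j$ is invertible. The $L_0$-coordinate then has the form $s\cdot(\text{term in } a)+s[c,1_-]$, where $[c,1_-]=cV_{j^2,1}$ is a unit multiple of the grading element; comparing the coefficient of $V_{1,1}$ (for instance via $\tau(\,\cdot\,,V_{1,1})$) determines $c$. So the map is a monomorphism with an explicit polynomial inverse on its image.

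\textbf{Step 3 (surjectivity, the main obstacle).} I must show that every $x\in\bX^+(S)$ equals $\exp(\ad_u)(s1_-)$ for the $u$ found in Step 2. Replacing $x$ by $\exp(\ad_{-u})x$ reduces this to: if $x\in\bX^+(S)$ has $L_{-1}$-component $0$ and $L_0$-component with vanishing $V_{1,1}$-coefficient, then $x=s1_-$. I would prove this using the defining equations $[x,[x,y]]=\lambda\tau(x,y)x$, testing against $y\in L_2$ and $y=1_+$. The degree $-2$ piece of $[x,[x,1_+]]$ forces $\lambda$ in terms of $s$. The degree $-1,0,1,2$ pieces then give linear equations, with the unit coefficient $s$, that successively kill $x_0$, $x_1$, and $t$. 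Concretely, $[s1_-,[s1_-,x_{i}]]$ appears with an invertible coefficient and must vanish, and $\ad_{1_-}^2$ is injective on $L_2$, and on the relevant parts of $L_1$ and $L_0$ by the $\sll_2$-weight structure relative to the grading element. This computation, which must work over arbitrary $S$ and not just over fields, is where the real work lies. Once it is done, the two maps are inverse natural transformations, so $\bX^+\cong\mathbb A^{57}\times\Gm$, which is smooth. Openness is automatic, since $\bX^+$ is a principal open defined by the linear function $r_{1_+}$.
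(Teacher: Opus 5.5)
Your strategy is essentially the paper's, written in other coordinates. The paper's map $\zeta(b,r,s)=\mathrm{Ad}_{-(0,0,0,js^{-1}b,0)}(s1_-,0,rV_{1,1},0,-r^2s^{-1}1_+)$ is your $\exp(\ad_u)(s1_-)$ with $u=(-js^{-1}b,\,(r/s)1_+)$. This holds because $(s1_-,0,rV_{1,1},0,-r^2s^{-1}1_+)=\exp(\ad_{(r/s)1_+})(s1_-)$, and because $\ad_a$ and $\ad_c$ commute for $a\in L_1$, $c\in L_2$. Its inverse $\eta$ reads off $x_{-1}$ and then a $V_{1,1}$-coefficient via $\tau(\cdot,V_{1,1})$, exactly as in your Step~2.

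There are three minor points. First, Proposition~\ref{Ptaur} is stated only for domains. Nevertheless, $r_{1_+}(x)=-2\tau(1_+,x)$ holds over any $S$ by Example~\ref{Eextremal}, since $[1_+,[1_+,x]]$ depends only on $x_{-2}$. Second, $1/4!$ needs no special care, since $24\in k^*$. Third, the fact that $\exp(\ad_c)$ is an automorphism for $c\in L_2$ deserves a word; the paper sidesteps it by checking directly that the explicit element above is extremal.

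The real gap is Step~3, which you leave undone, and whose concrete mechanism as you sketch it would not work. $\ad_{1_-}^2$ maps $L_1$ and $L_0$ into $L_{-3}=L_{-4}=0$, so it is injective on no nonzero part of them. When you test against $1_+$, it enters only through $[s1_-,[s1_-,1_+]]$ in degree $-2$. The operator that actually does the work is the grading element $V_{1,1}$.

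Let $y$ be extremal with $y_{-2}=s1_-$ and $y_{-1}=0$, and put $2q=\tau(y_0,V_{1,1})$. Then $[y_0,1_+]=2q1_+$, hence $[y,1_+]=-sV_{1,1}+2q1_+$. Since $\ad_{V_{1,1}}$ acts on $L_i$ as multiplication by $i$,
\[[y,[y,1_+]]=-2s^2 1_- -2qs\,V_{1,1}+s\,y_1+\bigl(2s\,y_2+4q^21_+\bigr).\]
Now compare with $r_y(1_+)y$:
\begin{itemize}
\item degree $-2$ gives $r_y(1_+)=-2s$;
\item degree $0$ gives $y_0=qV_{1,1}$;
\item degree $1$ gives $3s\,y_1=0$;
\item degree $2$ gives $4s\,y_2=-4q^21_+$.
\end{itemize}
This is precisely the computation in the paper. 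It works over any $k$-ring, since only $2$, $3$ and $s$ are inverted. With your normalization $q=0$ it gives $y=s1_-$ at once. Once this computation is supplied, your argument is complete.
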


Note that the affine space $\mathbb A^{57}$ in question is the product of the underlying space of the Brown algebra $B$, viewed in $L=L(B^s)$ as $L_{-1}$ or $L_{1}$ as the case may be, and one copy of the affine line, viewed as the span of $V_{1,1}\in L_0$. For the proof, we note that if $x\in L_1$, then $\ad_x^n=0$ for $n>4$, so that
\[\mathrm{Ad}_x=\textstyle\sum_{n=0}^4 \frac{1}{n!}\ad_x^n\]
is well defined as all denominators are invertible. By \cite[Theorem 2.10]{Sta}, it is an automorphism of $L$. Its inverse is $\mathrm{Ad}_{-x}$.

 \begin{proof} We argue for $\bX^+$; the argument for $\bX^-$ is identical, \emph{mutatis mutandis}. The function $r_{1_+}:L\to k$ is polynomial and induces a regular function $f$ on the affine scheme $\bX^L$. Then $\bX^+$ is the distinguished affine open subscheme $\bX_f$ of $\bX^L$.

 To establish the desired isomorphism, consider, for each $k$-ring $S$, the map $\eta_S:\bX^+(S)\to B_S\times S\times S^*$
 defined by
 \[x\mapsto (x_{-1},\tfrac12\tau(\mathrm{Ad}_{(0,0,0,js^{-1}x_{-1},0)}x,V_{1,1}),s) \qquad{where}\qquad x_{-2}=s1_-.\]
This is well-defined since $r_{1_+}(x)=-2s$, so that $s\in S^*$. The coordinates are regular functions, and this defines a natural transformation $\eta:\bX^+\to\mathbb A^{57}\times\Gm$. In the other direction we define the map $\zeta_S: B_S\times S\times S^*\to \bX^+(S)$ by
\[(b,r,s)\mapsto \mathrm{Ad}_{-(0,0,0,js^{-1}b,0)}(s1_-,0,rV_{1,1},0,- r^2s^{-1}1_+).\]
This map is well-defined since $s\in S^*$ and a direct computation shows that $y=(s1_-,0,rV_{1,1},0,- r^2s^{-1}1_+)$ is extremal, and thus so is its automorphic image $x=\mathrm{Ad}_{-(0,0,0,js^{-1}b,0)}y$. Its coordinates are regular functions and it defines a natural transformation $\zeta:\mathbb A^{57}\times\Gm\to\bX^+$. We will show that $\zeta$ is inverse to $\eta$.

If $x=\zeta_S((b,r,s))$, then a direct check shows that $x_{-1}=b$ and $x_{-2}=s1_-$, so
\[\eta_S(x)=(b,\tfrac12\tau(y,V_{1,1}),s)=(b,\tfrac12r\beta(1,1),s)=(b,r,s).\]
Thus $\eta\zeta=\Id$. In the other direction, if $(b,r,s)=\eta_S(x)$, then by definition of $\eta_S$ we have $x_{-2}=s1_-$ and $x_{-1}=b$. Set
\[y=\mathrm{Ad}_{(0,0,0,js^{-1}b,0)}x \quad \Longleftrightarrow \quad x=\mathrm{Ad}_{-(0,0,0,js^{-1}b,0)}y.\]
Then $y_{-2}=s1_-$ and $y_{-1}=0$. Now $y$, being the automorphic image of $x$, is extremal, so 
\[[y,[y,1_+]]=-2r_y(1_+)y.\]
Writing $y_0=\Sigma_iV_{c_i,d_i}$, the 0-component of the left-hand side is $-2sqV_{1,1}$, where $q=\tfrac12\Sigma_i\beta(c_i,d_i)$, while the 0-component of the right-hand side is $-2sy_0$. Since $s\in S^*$ this implies that $y_0=qV_{1,1}$. Using this, the 1- and 2-components of the equality compute to
\[sy_1=-2sy_1\qquad \text{and}\qquad 4q^21_++2sy_2=-2sy_2.\]
From this follows that $y_1=0$ and $y_2=-s^{-1}q^21_+$. The condition $\eta_S(x)=(b,r,s)$ further implies that $q=r$, so that altogether
\[y=(s1_-, 0, rV_{1,1}, 0, -r^2s^{-1}1_+),\]
giving $\zeta_S((b,r,s))=x$. This shows that $\zeta\eta=\Id$, and the proof is complete. 
\end{proof}

\begin{Cor}\label{Csmooth} Over any algebraically closed field $k$, the scheme $\bY^L$ is smooth.
\end{Cor}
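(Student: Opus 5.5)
Over an algebraically closed field $k$ I will show that $\bY^L$ is covered by open subschemes, each isomorphic to an open subscheme of a product of a smooth scheme with affine space. The base point is $(1_+,1_-)\in\bY^L(k)$ (recall $j^2=1$ here, so $\mu=1$). By Proposition \ref{Trans}, $\bG(k)$ acts transitively on $\bY^L(k)$, and each element of $\bG(k)$ acts on $\bY^L$ by a scheme automorphism. So the smooth locus of $\bY^L$ is a $\bG(k)$-stable open set, and it suffices to show that $\bY^L$ is smooth at the single point $(1_+,1_-)$. Since $k$ is algebraically closed, smoothness is determined at $k$-points, so this will give smoothness everywhere.

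First I cover a neighbourhood of $(1_+,1_-)$. The pair $(1_+,1_-)$ lies in $\bX^-\times\bX^+$, because $r_{1_-}(1_+)=-2$ and $r_{1_+}(1_-)=-2$ are units by Example \ref{Eextremal}. So the open subscheme $\bU=\bY^L\cap(\bX^-\times\bX^+)$ contains it. By Proposition \ref{Psmooth}, $\bX^-\times\bX^+\simeq(\mathbb A^{57}\times\Gm)^2$ is smooth. By Proposition \ref{Ptaur}, applied over the integral domains in question (or via Lemma \ref{Llambda} together with $\tY\simeq\bY^L$), the condition $r_x(y)=-2$ is equivalent to $\tau(x,y)=1$. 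Hence $\bU$ is the hypersurface $\{\tau(x,y)=1\}$ in a smooth scheme of dimension $116$.

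Next I check the Jacobian criterion at the base point. Writing $f(x,y)=\tau(x,y)$, its differential at $(1_+,1_-)$ in the direction $(u,v)\in T_{1_+}\bX^-\oplus T_{1_-}\bX^+$ is $\tau(u,1_-)+\tau(1_+,v)$. I must show this is nonzero for some tangent vector. A natural choice is the orbit of the grading cocharacter: $t\mapsto\chi(t)1_+=t^21_+$ is a curve in $\bX^-$, with tangent vector $u=2\cdot1_+$ at $t=1$. Taking $v=0$ gives $df=2\tau(1_+,1_-)=2\mu=2\neq0$. So $\bU$ is smooth of dimension $115$ at $(1_+,1_-)$, and by homogeneity $\bY^L$ is smooth everywhere.

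The main obstacle is to make sure that $\bY^L$ really is cut out by the single equation $\tau(x,y)=1$ scheme-theoretically, and not only on reduced points. Over a general $k$-ring $S$ the condition is $r_x(y)=-2$, where $r_x=-\lambda_x\tau(x,-)$ and $\lambda_x\in S^*$ may differ from $2$ by a nilpotent. So I would instead work with $\tY\simeq\bY^L$. On $\bX^\pm$ I would show, by tracking the isomorphism of Proposition \ref{Psmooth}, that $\lambda_x=2$ identically: the normal form $y=(s1_-,0,rV_{1,1},0,-r^2s^{-1}1_+)$ satisfies $F_{y,2}=0$ by direct computation, and $\mathrm{Ad}$ preserves $\tau$ by Proposition \ref{Ptaustable}. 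This makes $\bU$ exactly the hypersurface $\tau(x,y)=1$, and with that the argument above goes through.
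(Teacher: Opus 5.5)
Your proof is correct and follows the paper's argument: you use the transitivity of Proposition \ref{Trans} to reduce to the point $(1_+,1_-)$, view the open piece $\bU\subseteq\bX^-\times\bX^+$ as the hypersurface $\tau(x,y)=1$ in a smooth scheme, and check the Jacobian criterion there, where your tangent vector along $t\mapsto t^2 1_+$ is exactly the paper's partial derivative in the $s_+$-coordinate. Your extra step showing that $r_x(y)=-2\tau(x,y)$ holds scheme-theoretically on $\bX^-\times\bX^+$ covers a point the paper uses without comment; it also follows at once from Proposition \ref{Ptaur}, since the coordinate ring of $\bX^-\times\bX^+\simeq(\mathbb A^{57}\times\Gm)^2$ is an integral domain.
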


\begin{proof} We will show that the point $(1_+,1_-)\in\bY^L(k)$ is regular. Then Proposition \ref{Trans} implies that all $k$-rational points are regular. Since $k$ is algebraically closed, this implies that all closed points are regular and that the scheme is smooth. 

By the previous proposition, the product $\bX^L\times\bX^L$ contains the smooth open subscheme $\bX^-\times \bX^+$. Consider the affine subscheme $\bU$ of $\bX^-\times \bX^+$ defined by the equation $r_x(y)=-2$. It is an open subscheme of $\bY^L$, namely the intersection of the two distinguished open subschemes $\bY^L_g\cap\bY^L_h$, where
\[g(x,y)=r_{1_-}(x) \qquad \text{and}\qquad h(x,y)=r_{1_+}(y).\]
The point $p=(1_+,1_-)$ is contained in $\bU(k)$. Since $\bX^-\times\bX^+$ is smooth at $p$, and $\bU$ is the closed subscheme of $\bX^-\times\bX^+$ defined by $r_x(y)+2=0$, to show that $p$ is regular in $\bU$ it suffices to check the Jacobian criterion applied to $h(x,y)=r_x(y)+2=-2(\tau(x,y)-1)$ at $p$. Using the notation from Definition \ref{Dtau}, the derivative of $h(x,y)$ with respect to $s_+$ is $-2t_-$, which is equal to $-2\neq 0$ at $p$. Thus $p$ is regular in $\bU$ and, since $\bU$ is open in $\bY^L$, it is a regular point of $\bY^L$. This completes the proof.
\end{proof}

We now go back to working over a general ring $R$ with $6\in R^*$. In the remainder of this section, we assume that $L=L(B)$ where $B^-$ contains a conjugate invertible element $j$, so that $j^2=\mu\in R^*1_B$. Recall that this is equivalent to $B^-$ being free. As before, we write $1_+$ for $j\in L_2$, and $1_-$ for $j\in L_{-2}$. We also set $Q=Q(B,j)$.

\begin{Prp}\label{Pstab} Under the action of $\bG$ on $\bY^L$, the stabilizer of $(1_+,\mu^{-1}1_-)$ is $\bInv(Q)$.
\end{Prp}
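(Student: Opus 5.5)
We need to show that for every $R$-ring $S$, an automorphism $\phi\in\bG(S)$ fixes both $1_+$ and $\mu^{-1}1_-$ if and only if $\phi$ lies in the image of $\bInv(Q)(S)$ under the chain $\bInv(Q)\to\bStr(B)\overset\sim\to\bAutgr(L)\to\bG$. The plan is to pass through the graded automorphism group: first show the stabilizer lies in $\bAutgr(L)$, then identify it inside $\bStr(B)$ via Proposition \ref{Pstrgr}, and finally use the characterization of $\bInv(Q)$ in Proposition \ref{PInv}(iii).

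\emph{Step 1: the stabilizer is graded.} If $\phi$ fixes $1_+$ and $1_-$ (since $\mu$ is a unit, fixing $\mu^{-1}1_-$ is the same as fixing $1_-$), then it fixes $[1_+,1_-]=V_{j\cdot j,1}=\mu V_{1,1}$, hence fixes the grading element $z=V_{1,1}$, since $\mu\in S^*$. By Remark \ref{Rderivation} (applied over $S$, together with Lemma \ref{Lderivation}), $\phi$ is then a graded automorphism. Conversely, every element of $\bAutgr(L)(S)$ preserves $L_{\pm2}=S1_\pm$, so it acts on $1_\pm$ by scalars $c_\pm\in S^*$. Moreover, $c_+c_-=1$, since $\mu V_{1,1}=[1_+,1_-]$ is mapped to $c_+c_-\mu V_{1,1}$ and a graded automorphism commutes with $\ad_z$, hence must fix $z$. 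To see this last point, note that $\phi(z)-z$ centralizes $L$ up to grading considerations; more directly, use that $\phi$ commutes with the grading cocharacter (Proposition \ref{Pautgr}), hence with its derivative $\ad_z$, so $\ad_{\phi(z)}=\ad_z$, and $L$ has trivial center by regularity of $\tau$ together with associativity. Thus the stabilizer is the subgroup of $\bAutgr(L)$ on which $c_+=1$.

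\emph{Step 2: translate to $\bStr(B)$.} Under the isomorphism of Proposition \ref{Pstrgr}, $\phi$ corresponds to $\phi_1$, with $\phi_{-1}=\widehat{\phi_1}$ by Lemma \ref{Lstructurable}. The degree-$2$ component is determined by the bracket $L_1\times L_1\to L_2$, namely $[x_+,y_+]=\mu^{-1}b_j(x,y)1_+$. Applying $\phi$ gives
\[b_j(\phi_1 x,\phi_1 y)1_+=b_j(x,y)\phi_2(1_+)=c_+b_j(x,y)1_+,\]
so $c_+=m_{\phi_1}$, the multiplier from Proposition \ref{PInv}(i). Hence $c_+=1$ exactly when $m_{\phi_1}=1$, which by Proposition \ref{PInv}(iii) means $\phi_1\in\bInv(Q)(S)$. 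Since $c_-=c_+^{-1}$, fixing $1_-$ then follows automatically; alternatively, Proposition \ref{PInv}(iv) applied to $\widehat\phi$ on $L_{-1}$ gives it directly.

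The main obstacle I expect is making Step 1's converse rigorous, namely that graded automorphisms fix $z$ and satisfy $c_+c_-=1$. If the cocharacter argument is awkward, I would instead use the $L_{-1}\times L_{-1}$ bracket together with Proposition \ref{PInv}(iv) and the identity $b_j(\widehat\phi x,\widehat\phi y)=m_\phi^{-1}b_j(x,y)$, which I would prove analogously to (iv). This computes $c_-=m_\phi^{-1}$ without reference to $z$. Finally, since all the identifications are natural in $S$, they give an equality of subgroup functors, which is the claimed equality of stabilizer and $\bInv(Q)$.
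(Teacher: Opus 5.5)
Your proof is correct and follows essentially the paper's route: the grading element $[1_+,\mu^{-1}1_-]=V_{1,1}$ puts the stabilizer inside $\bAutgr(L)\cong\bStr(B)$, and Proposition~\ref{PInv}(iii) then identifies it with $\bInv(Q)$. Your bracket computation $[x_+,y_+]=\mu^{-1}b_j(x,y)1_+$ is the general form of the identity from \cite[Proposition 12.3]{AH} that the paper evaluates at $(x,y)=(j,1)$. The only real difference is that you obtain $\phi(1_-)=1_-$ from $c_+c_-=1$, whereas the paper gets it from Proposition~\ref{PInv}(iv). For that step, the cleanest reason graded automorphisms fix $V_{1,1}$ is that $\phi(V_{1,1})-V_{1,1}\in L_0$ acts as zero on $L_1=B$. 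Your trivial-center argument via $\tau$ would additionally need $[L,L]=L$.
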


\begin{proof}
   We first prove that the inclusion $\bInv(Q)\to\bG$ factors through the stabilizer. Denote the isomorphism $\bStr(B)\to\bAutgr(L)$ by $\iota$. Take $S\in \Ralg$ and let $\phi\in\bInv(Q)(S)\subseteq\bStr(B)(S)$. Then
   \[\phi V_{x,y}=V_{\phi(x), \widehat\phi(y)}\phi,\]
   and by Proposition \ref{PInv}, both $\phi$ and $\widehat\phi$ respect $b_j$.
   
   By \cite[Proposition 12.3]{AH}, $\iota(\phi)$, acting on $L_2=Rj$, satisfies
   \[2\iota(\phi)(j)=\phi(j)\phi(1)^* - \phi(1)\phi(j)^*=\mu^{-1}b_j(\phi(j),\phi(1))j=\mu^{-1}b_j(j,1)j=2j,\]
implying that $\iota(\phi)$ stabilizes $1_+$. Similarly, $\iota(\phi)$, acting on $L_{-2}$, satisfies
   \[2\iota(\phi)(j)=\widehat\phi(j)\widehat\phi(1)^* - \widehat\phi(1)\widehat\phi(j)^*=\mu^{-1}b_j(\widehat\phi(j),\widehat\phi(1))j=\mu^{-1}b_j(j,1)j=2j,\]
    implying that $\iota(\phi)$ stabilizes $1_-$. Thus we have the inclusion morphism $\bInv(Q)\to\bStab(1_+,\mu^{-1}1_-)$. We want to show that it is an isomorphism. 
    
    Take $\psi\in\bStab(1_+,\mu^{-1}1_-)(S)$. Then $\psi([1_+,\mu^{-1}1_-])=[1_+,\mu^{-1}1_-]$, which is a grading element, so by Remark \ref{Rderivation}, $\psi\in\bAutgr(L)$. In other words $\psi=\iota(\phi)$ for some $\phi\in\bStr(B)$. Since $\iota(\phi)$ stabilizes $1_+$, we have
\[\phi(j)\phi(1)^* - \phi(1)\phi(j)^*=2\iota(\phi)(j)=2j\]
and multiplying through by $j$ we obtain
\[b_j(\phi(j),\phi(1))=2\mu.\]
Then Proposition \ref{PInv}(iii) yields $\phi\in\bInv(Q)(S)$. This completes the proof.
\end{proof}

\begin{Thm}\label{Tquotient} The fppf-quotient $\bG/\bInv(Q)$ is represented by a smooth affine scheme, and the map $\Pi:\bG\to\bY^L$ defined by $\phi\mapsto(\phi(1_+),\phi(\mu^{-1}1_-))$ for all $\phi\in\bG(S)$ with $S\in\Ralg$, induces an isomorphism $\bG/\bInv(Q)\overset{\sim}{\longrightarrow} \bY^L$.
\end{Thm}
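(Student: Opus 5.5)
The plan is to show that the orbit map $\Pi$ is a $\bInv(Q)$-torsor over $\bY^L$, i.e.\ that $\Pi$ is faithfully flat (fppf) and that the induced map $\bG\times\bInv(Q)\to\bG\times_{\bY^L}\bG$, $(\phi,\psi)\mapsto(\phi,\phi\psi)$, is an isomorphism. Then $\bY^L$ represents the fppf-quotient $\bG/\bInv(Q)$, since a torsor's base is the quotient sheaf. By the previous lemma, $\Pi$ is $\bG$-equivariant. By Proposition \ref{Pstab}, the stabilizer of $p=(1_+,\mu^{-1}1_-)$ is $\bInv(Q)$. Hence $\Pi$ factors through a monomorphism of fppf sheaves $\overline\Pi:\bG/\bInv(Q)\to\bY^L$. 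What remains is to show that $\overline\Pi$ is an epimorphism of fppf sheaves, equivalently that $\Pi$ is fppf-locally surjective on points.

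For local surjectivity, I would first establish that $\bY^L$ is smooth over $R$ and that $\Pi$ is smooth and surjective. Both $\bG$ and $\bY^L$ are finitely presented affine schemes, and $\bG$ is smooth by Proposition \ref{Smoothaut}. Two facts over geometric points $k$ are already available: Corollary \ref{Csmooth} gives smoothness of $\bY^L_k$, and Proposition \ref{Trans} gives transitivity of $\bG(k)$ on $\bY^L(k)$. The fibres of $\Pi_k$ are therefore cosets of $\bInv(Q)_k$, which is smooth of dimension $133$ by Lemma \ref{Smooth}. Hence $\dim\bY^L_k=248-133=115$. (This is consistent with Proposition \ref{Psmooth}: $\bX^L$ has dimension $58$ and one equation is imposed on pairs, giving $2\cdot58-1=115$.) The key remaining point is flatness of $\bY^L$ over $R$. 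Once that holds, $\bY^L$ is smooth by the fibrewise criterion, and the fibrewise flatness criterion (\cite[$_4$11.3.10]{EGAIV}) shows $\Pi$ is flat, since each $\Pi_k$ is an orbit map of a smooth group onto a smooth homogeneous variety and hence flat. Being surjective on geometric points, $\Pi$ is then fppf, indeed smooth, so every point of $\bY^L(S)$ lifts to $\bG(S')$ for some fppf (even étale) cover $S\to S'$.

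I expect flatness of $\bY^L$ over $R$ to be the main obstacle, since $\bY^L$ is defined by equations and a priori could have embedded components over special fibres. I would first reduce, as in Proposition \ref{Smoothaut}, to the split case over $\ZS$, using that the statement descends along the fppf cover splitting $B$ and that all constructions commute with base change. For the split case there are two options. The first is to prove directly that the open subscheme $\bU\subseteq\bX^-\times\bX^+$ from Corollary \ref{Csmooth} is smooth over $R$: the isomorphisms $\eta,\zeta$ of Proposition \ref{Psmooth} are given by integral formulas, so they work over any $R$ with $6\in R^*$, and the Jacobian computation is identical. The translates of $\bU$ under $\bG(R)$ would then need to cover $\bY^L$ fibrewise. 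The second option avoids flatness of $\bY^L$ altogether. One shows instead that $\Pi$ is flat onto its scheme-theoretic image, using the quotient $\bG/\bInv(Q)$, which exists as a smooth affine scheme by general results on quotients of reductive group schemes by reductive subgroups (\cite{SGA3}, or Alper's affineness results). One then checks that the monomorphism $\bG/\bInv(Q)\to\bY^L$ is an isomorphism on geometric fibres, and concludes with the fibrewise isomorphism criterion \cite[$_4$17.9.5]{EGAIV}; that criterion needs $\bY^L$ flat only over the target, so it is applied to the map from the smooth, flat source. I would try the first option first, since it keeps everything explicit and also yields the smoothness claim of the theorem directly.
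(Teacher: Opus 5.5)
Your second option is in substance the paper's proof. There, \cite[XVI.2.2]{SGA3} and \cite[VI$_B$.9.2(xii)]{SGA3} give $\bG/\bInv(Q)$ as a smooth scheme, locally of finite presentation, with a monomorphism $i$ into $\bY^L$. Next, \cite[$_4$17.9.5]{EGAIV}, which needs flatness only of the source, reduces the claim to algebraically closed fields. There, Corollary \ref{Csmooth} and Proposition \ref{Trans} finish via \cite[III.3.2.1]{DG}. You do not need Alper or any other affineness theorem: affineness of the quotient is inherited from $\bY^L$ once $i$ is an isomorphism.

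Your primary (torsor) route is actually already complete, because the obstacle you single out does not exist. In \cite[$_4$11.3.10]{EGAIV}, for $R$-schemes locally of finite presentation, the hypotheses ``$\bG$ is $R$-flat at $g$ and $\Pi_s$ is flat at $g$'' yield \emph{both} that $\Pi$ is flat at $g$ \emph{and} that $\bY^L$ is $R$-flat at $\Pi(g)$. Flatness of the target is an output of the criterion, not an input.

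Flatness of $\Pi_s$ can be checked over $\overline{\kappa(s)}$. There it follows from generic flatness (the target is reduced by Corollary \ref{Csmooth}), spread over all points by the transitive action of Proposition \ref{Trans}. The same proposition gives surjectivity of $\Pi$. Hence $\bY^L$ is $R$-flat, and therefore smooth by Corollary \ref{Csmooth}. Moreover $\Pi$ is faithfully flat of finite presentation, and together with Proposition \ref{Pstab} it is an $\bInv(Q)$-torsor over the affine scheme $\bY^L$.

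Compared with the paper, this bypasses the SGA3 representability theorem and proves the quotient statement directly. The paper uses that theorem and obtains flatness of $\bY^L$ only a posteriori. Both rest on exactly the same fibrewise inputs.

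Your first option, which you say you would try first, is unnecessary, and as formulated it has a gap.
\begin{enumerate}[(i)]
\item The Jacobian computation in Corollary \ref{Csmooth} is carried out only at $(1_+,1_-)$. An ``identical'' computation over $R$ therefore gives smoothness of $\bU$ only near that section.
\item More seriously, nothing guarantees that $\bG(R)$-translates of $\bU$ cover $\bY^L$. Over a finite residue field the image of $\bG(R)$ is finite, and finitely many translates of a dense open subset need not contain every closed point.
\end{enumerate}
If you insist on this route, replace translates by the action morphism $\bG\times\bU\to\bY^L$. It is smooth, being an open immersion into $\bG\times\bY^L$ followed by the projection, and it is surjective by Proposition \ref{Trans}. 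This, however, costs more than the argument above.
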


\begin{proof} By Proposition \ref{Smoothaut} and Lemma \ref{Smooth}, both $\bG$ and $\bInv(Q)$ are smooth, and by Proposition \ref{Pstab}, $\bInv(Q)$ is the stabilizer of an $R$-point of $\bY^L$. Then \cite[XVI.2.2]{SGA3} and \cite[VIB.92(xii)]{SGA3}, taken together, imply that the quotient is representable by a smooth scheme that is locally of finite presentation, and that the induced map $i:\bG/\bInv(Q)\to \bY^L$ is a monomorphism. Since $\bY^L$ is also locally of finite presentation, to show that $i$ is an isomorphism it is enough, by  \cite[$_4$.17.9.5]{EGAIV}, to prove that $i$ becomes an isomorphism over any algebraically closed field $k\in\Ralg$. Over $k$, $\bY^L$ is smooth by Corollary \ref{Csmooth}, and the fact that $i$ is an isomorphism follows from Proposition \ref{Trans} by \cite[III.3.2.1]{DG}.
\end{proof}

\section{Torsors and twists}
We continue assuming that $L=L(B)$ where $B^-$ contains $j$ with $j^2=\mu1_B\in R^*1_B$, and denoting $j$ by $1_\pm$ when viewed as an element of $L_{\pm 2}$. Theorem \ref{Tquotient} provides a quotient projection $\Pi:\bG\to\bY^L$ that is fppf-locally surjective.  Let $e=(e_+,e_-)\in\bY^L(R)$ and let $\bE_e$ be the inverse image of $e$ under $\Pi$, so that for any $S\in\Ralg$,
\[\bE_e(S)=\{g\in \bG(S) \mid \Pi(S)(g)=e_S:=({e_+}_S,{e_-}_S)\}.\]
Then Theorem \ref{Tquotient} implies that  for each $e\in\bY^L(R)$, $\bE_{e}$ is an fppf-torsor under $\bInv(Q)$. We want to understand the twists of $L(B)$ by these torsors. As it will turn out, these correspond to different gradings on $L$. The following proposition gives the basic relation.

\begin{Prp} Let $e=(e_+,e_-)\in\bY^L(R)$, set $D=\ad_{[e_+,e_-]}$ and  \[L_i^D=\{x\in L\mid D(x)=ix\}.\]
Then this is a TKK-grading of $L(B)$, such that the $\pm 2$-component contains $e_\pm$.
\end{Prp}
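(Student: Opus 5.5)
Since $e$ is an $R$-point of $\bY^L$ and, by Theorem~\ref{Tquotient}, $\Pi:\bG\to\bY^L$ is fppf-locally surjective, there is a faithfully flat $R$-ring $S$ and some $g\in\bG(S)$ with $\Pi(g)=e_S$. Concretely, $g(1_+)=e_+$ and $g(\mu^{-1}1_-)=e_-$. The plan is to transport the standard grading of $L_S$ along $g$, and then descend to $R$.

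First we identify the standard grading element. By the bracket $L_2\times L_{-2}\to L_0$, we have $[1_+,\mu^{-1}1_-]=\mu^{-1}V_{j j,1}=V_{1,1}$. By the remark following the Corollary to Proposition~\ref{PTKK}, $\ad_{V_{1,1}}$ is the grading derivation of the TKK-grading of $L(B)$. Since $g$ is an automorphism,
\[
[e_+,e_-]_S=g([1_+,\mu^{-1}1_-])=g(V_{1,1}),
\]
so that $D_S=\ad_{g(V_{1,1})}=g\,\ad_{V_{1,1}}\,g^{-1}$. It follows that $(L_S)_i':=g((L_S)_i)$ is exactly the $i$-eigenmodule of $D_S$. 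Indeed, $g((L_S)_i)$ is contained in the $i$-eigenmodule, and the reverse inclusion follows by applying Lemma~\ref{Lderivation} to the graded algebra $L_S$ with grading $g((L_S)_i)$, which has components only in degrees $|i|\le 2$. Moreover, $g$ is a graded isomorphism from $L_S$ with its standard grading onto $L_S$ with the $D_S$-grading, so the latter grading is a TKK-grading. In particular, $e_+=g(1_+)$ lies in the $2$-component and $e_-$ lies in the $-2$-component.

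It remains to descend to $R$. Set $L_i^D$ as in the statement, and let $P_i$ be the polynomial in $D$ given by Lagrange interpolation,
\[
P_i=\prod_{k\neq i,\ |k|\le 2}\frac{D-k}{i-k},
\]
whose denominators are invertible because $6\in R^*$. Over $S$, the eigenmodules of $D_S$ give a direct sum decomposition of $L_S$ with eigenvalues in $\{-2,\dots,2\}$. Hence $\prod_{|k|\le 2}(D-k)=0$ on $L_S$, and therefore also on $L$ by faithful flatness. This makes the $P_i$ complementary idempotents with images $L_i^D$. Thus $L=\bigoplus_i L_i^D$, and $L_i^D\otimes S$ is the $i$-eigenmodule of $D_S$, because taking kernels commutes with flat base change.

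Each $L_i^D$ is then finitely generated projective of constant rank $d_i$, since it is a direct summand of $L$ whose base change to $S$ has this property. The inclusion $[L_i^D,L_k^D]\subseteq L_{i+k}^D$ follows because $D$ is a derivation. By the first part, $L$ with this grading is fppf-locally graded isomorphic to $L(B_S)$, which is itself locally isomorphic to $L^s$; so the grading is a TKK-grading. Finally, $e_\pm\in L^D_{\pm2}$, since this can be checked after base change to $S$.

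The only step that needs care is the descent of the eigenspace decomposition. This is handled by the polynomial identity $\prod_{|k|\le 2}(D-k)=0$ combined with the invertibility of $6$.
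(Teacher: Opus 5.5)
Your proof is correct and follows the paper's route. You lift $e$ fppf-locally to some $g\in\bG(S)$ via Theorem~\ref{Tquotient}, note that $D_S=g\,\ad_{V_{1,1}}\,g^{-1}$ is conjugate to the standard grading derivation, apply Lemma~\ref{Lderivation} over $S$, and descend projectivity, rank and the decomposition to $R$.

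The one variation is in how you descend the decomposition. You use the identity $\prod_{|k|\le 2}(D-k)=0$ together with Lagrange idempotents, where the paper checks pairwise intersections and then bijectivity of $\bigoplus_{|i|\le 2}L_i^D\to L$ after base change; your version makes directness explicit and keeps $|i|\le 2$. One caveat applies to both proofs: for $|i+k|>2$, the vanishing of $[L_i^D,L_k^D]$ must come from the local graded isomorphism rather than the Leibniz rule. This is because $\{x\in L\mid D(x)=nx\}$ with $|n|\ge 3$ need not be zero when $5\notin R^*$ (e.g.\ over $\mathbb{F}_5$), which the paper's claim that $L_i^D=0$ for $|i|>2$ overlooks.
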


\begin{proof}  By the Leibniz rule, if $x\in L_i^D$ and $y\in L_j^D$, then $[x,y]\in L_{i+j}^D$. Next we show that $L_i^D$ is finitely generated projective of constant rank $d_i$ (where $d_i$ is as in Definition \ref{DTKK}). These properties are invariant under faithfully flat descent. By Theorem \ref{Tquotient}, there is an fppf-ring extension $S$ of $R$ such that the action of $\bG(S)$ on $\bY(S)$ is transitive. Thus there is $\phi\in\bG(S)$ mapping $(e_+,e_-)$ to $(1_+,\mu^{-1}1_-)$. Setting $D'=\ad_{[1_+,\mu^{-1}1_-]}$, we have $D_S'\phi(x)=\phi D_S(x)$. Thus for $x\in L_S$, $D_S(x)=ix$ if and only if $D_S'(\phi(x))=i\phi(x)$, so $\phi$ induces a linear isomorphism $(L_S)_i^{D_S}\to (L_S)_i^{D'_S}$. But $D'$ is the grading derivation of the standard grading on $L$, so Lemma \ref{Lderivation} gives $(L_S)_i^{D'_S}=(L_S)_i$. Since $(L_S)_i$ is finitely generated projective  of constant rank $d_i$, so is $(L_S)_i^{D_S}=(L_i^D)_S$. 

In particular, this implies that $L_i^D=0$ if $|i|>2$. From this follows that $L_i^D\cap L_j^D=0$ if $i\neq j$, since then $(i-j)x=0$ is impossible for $x\neq 0$. Thus we have an inclusion $\bigoplus_{|i|\leq2} L_i^D\to L$. We claim that this is an equality: passing to the fppf-extension $S$ above and composing with the map $\phi$ yields equality over $S$, from which the claim follows by descent.

Thus $L=\bigoplus_{|i|\leq2} L_i^D$ is a TKK-grading. For each $x\in L_i^D$ we have $D(x)\otimes 1_S=ix\otimes1_S$ with $S$ as above. By faithful flatness this implies that $D(x)=ix$. Thus $D$ is the grading derivation. By definition of $\bY^L$ we get $D(e_\pm)=\pm2e_\pm$, which implies that $e_\pm\in L_{\pm2}^D$.    
\end{proof}

We will simply call the grading with grading derivation $\ad_{[e_+,e_-]}$ the \emph{$e$-grading} on $L(B)$. The Lie algebra $L(B)$  endowed with this grading is denoted by $L(B,e)$. Thus $L(B,e)=L(B)$ as an (ungraded) Lie algebra. If $e=(1_+,\mu^{-1}1_-)$, the $e$-grading on $L(B)$ is the standard grading, so that in this particular case, $L(B,e)=L(B)$ as a graded Lie algebra. We call this choice of $e$ the \emph{standard extremal pair} of $L(B)$. When we consider $L(B)$ as a graded Lie algebra without specifying the grading, we always mean the standard grading.

We now want to understand the gradings on those Lie algebras $L'$ that are isomorphic to $L(B)$.

\begin{Lma}\label{Lgriso} If $L'$ is a Lie algebra with $L'\simeq L(B)$, then $L'=L(B')$ for some Brown algebra $B'$.    
\end{Lma}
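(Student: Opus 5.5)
The plan is to transport the whole TKK structure along a fixed isomorphism; the Brown algebra $B'$ will be $B$ with its structure moved across. Fix an isomorphism of Lie algebras $\phi:L(B)\to L'$. On $L'$ I define the grading $L'_i:=\phi(L(B)_i)$ for $|i|\leq 2$. This is a $\Z$-grading because $\phi$ is a Lie algebra isomorphism, and by construction $\phi$ becomes a graded isomorphism $L(B)\to L'$.

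Next I transport the Brown algebra. I take $B':=L'_1$ as an $R$-module and let $\phi_1:B\to B'$ be the restriction of $\phi$ to $L(B)_1=B$. On $B'$ I put the multiplication $x\cdot y:=\phi_1(\phi_1^{-1}(x)\phi_1^{-1}(y))$ and the involution $x^*:=\phi_1((\phi_1^{-1}x)^*)$. Then $\phi_1:B\to B'$ is an isomorphism of algebras with involution, so $B'$ is a Brown algebra: it is projective of rank $56$, and it is split fppf-locally because $B$ is.

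By Remark \ref{Rbasechange2}, $\phi_1$ induces a graded isomorphism $\Phi:L(B)\to L(B')$. The composite $\phi\circ\Phi^{-1}:L(B')\to L'$ is then a graded Lie algebra isomorphism whose degree-$1$ part is the identity on $B'=L'_1$. This identifies $L'$ with $L(B')$, so that $L'=L(B')$ as graded Lie algebras, which is the sense intended in the statement.

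The only point needing care is interpreting "$L'=L(B')$" correctly. It means $L'$ carries the grading and the identifications of its components $L'_{\pm1}\cong B'$, $L'_{\pm2}\cong B'^-$ and $L'_0\cong\mathfrak{instrl}(B')$, all transported via $\phi\Phi^{-1}$. Under these identifications the bracket agrees with the TKK bracket, because the composite map is a Lie isomorphism. I expect this bookkeeping, rather than any real computation, to be the only obstacle.

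One could simply take $B'=B$ and regard $\phi$ itself as an identification. However, the construction above makes the underlying module of $B'$ literally $L'_1$, which is what the subsequent discussion of gradings on $L'$ seems to need.
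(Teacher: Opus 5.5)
Your proof is correct. Its first step, putting the grading $L'_i=\phi(L(B)_i)$ on $L'$, is exactly the paper's, but the second step is different.

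The paper does not transport the algebra structure. It notes that the transported grading has grading element $\phi(V_{1,1})=[\phi(x_+),\phi(x_-)]$, where $x_\pm\in L(B)_{\pm1}$ is the unit of $B$. It then invokes the recognition result \cite[Lemma 4.16]{Sta} to get a structurable algebra $B'$ on $L'_1$ with $L'=L(B')$. Finally, Lemma \ref{Lstructurable} shows that $\phi_1$ is an isotopy $B\to B'$, so $B'$ is Brown because isotopes of Brown algebras are.

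You replace all this by transport of structure along $\phi_1$ together with the functoriality of the TKK-construction (Remark \ref{Rbasechange2}). Your route is self-contained. It needs neither the external recognition lemma nor the stability of Brown algebras under isotopy, and it gives the sharper conclusion $B'\simeq B$. The paper's route instead reads $B'$ off intrinsically from the graded algebra $L'$ and a pair in $L'_{\pm1}$ whose bracket is a grading element. That mechanism does not depend on how the grading and the pair were obtained, which is why its conclusion is only up to isotopy.

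Your own observation that one could take $B'=B$ shows that, with the transported grading, the lemma is essentially tautological. So your closing remark should be tempered. In the subsequent corollary the lemma is applied to a TKK-algebra $L'$ that already carries its own grading, and the transport step, in your proof as in the paper's, says nothing about that grading.
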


\begin{proof} Assume that $\phi:L(B)\to L'$ is an isomorphism of Lie algebras. Then $\phi$ transports the grading of $L(B)$ to $L'$ so that $(L')_i=\phi(L(B)_i)$, and $L'$, with this grading, is graded isomorphic to $L(B)$. The grading on $L'$ has grading element $\phi(V_{1,1})$. Since $V_{1,1}=[x_+,x_-]$, where $x_\pm\in L(B)_{\pm1}$ is the identity element of the Brown algebra $B$, we have $\phi(V_{1,1})=[\phi(x_+),\phi(x_-)]$ with $\phi(x_\pm)\in (L')_{\pm1}$. By \cite[Lemma 4.16]{Sta}, this implies that $L'=L(B')$ for a structurable algebra $B'$. By Lemma \ref{Lstructurable}, $B'$ is isotopic to $B$, and is thus a Brown algebra.
\end{proof}

\begin{Prp}\label{Pgriso} Let $\phi:L(B)\to L(B')$ be an isomorphism of TKK-algebras. Let $e=(e_+,e_-)\in \bY^{L(B)}(S)$ and set $e'=(\phi(e_+),\phi(e_-))$. Then $e'\in\bY^{L(B')}(S)$ and $\phi$ is an isomorphism of graded Lie algebras $L(B,e)\to L(B',e')$.
\end{Prp}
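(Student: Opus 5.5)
The plan has two parts: show that $\phi$ carries $\bY^{L(B)}$ into $\bY^{L(B')}$, and then show that $\phi$ intertwines the two grading derivations, so that Lemma \ref{Lderivation} turns this into preservation of the graded pieces. Throughout, $\phi$ is an isomorphism of (ungraded) Lie algebras over $R$. We use the same letter for its base change $\phi_S:L(B)_S\to L(B')_S$, which is again a Lie algebra isomorphism.

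For the first claim, $e_\pm$ are extremal in $L(B)_S$. By Remark \ref{Rexiso}, $\phi(e_\pm)$ is then extremal in $L(B')_S$, and $r_{\phi(e_+)}=r_{e_+}\circ\phi^{-1}$. Hence
\[
r_{\phi(e_+)}(\phi(e_-))=r_{e_+}(e_-)=-2,
\]
so $e'\in\bY^{L(B')}(S)$. This is the same computation as in the lemma showing that $\bG$ acts on $\bY^L$, only now between two different algebras.

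For the grading, let $D=\ad_{[e_+,e_-]}$ and $D'=\ad_{[\phi(e_+),\phi(e_-)]}$. By the previous proposition, these are the grading derivations of the $e$-grading and the $e'$-grading. (That proposition is stated for $R$-points; we apply it over $S$ after base change, or we take $S=R$ as the intended setting. Since $L(B')$ is again obtained from a Brown algebra with a conjugate invertible skew element, the proposition applies to it as well. If that hypothesis on $B'$ is in doubt, only the TKK-grading part of the proposition is needed, and its proof uses only Theorem \ref{Tquotient}.) Because $\phi$ is a Lie homomorphism,
\[
\phi([e_+,e_-])=[\phi(e_+),\phi(e_-)],
\]
and therefore $\phi\circ D=D'\circ\phi$. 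Consequently $x\in L_i^D$ if and only if $\phi(x)\in L_i'^{D'}$. Both gradings vanish in degrees $|i|>2$, so Lemma \ref{Lderivation} identifies these eigenspaces with the graded components. Thus $\phi$ maps the $i$-th component of $L(B,e)$ bijectively onto the $i$-th component of $L(B',e')$, which is precisely the statement that $\phi$ is a graded isomorphism.

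The only delicate point is the base-ring bookkeeping: $e$ is an $S$-point while the $e$-grading was defined for $R$-points. I would handle this by working with $L(B)_S$ and $L(B')_S$ throughout, using Remark \ref{Rbasechange2} to view them as TKK-algebras over $S$. Everything else is formal.
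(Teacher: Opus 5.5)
Your proof is correct and is essentially the paper's argument: Remark \ref{Rexiso} gives $e'\in\bY^{L(B')}(S)$, and since $\phi$ sends the grading element $[e_+,e_-]$ to $[\phi(e_+),\phi(e_-)]$, it intertwines the two grading derivations and hence matches the graded components via Lemma \ref{Lderivation}. Your concern about whether $B'$ has a conjugate invertible skew element is unnecessary, because $\phi\circ D=D'\circ\phi$ already shows that the $D'$-eigenspaces are exactly the $\phi$-images of the $D$-eigenspaces, so they automatically form a TKK-grading.
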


\begin{proof} If $e\in \bY^{L(B)}(S)$, then Remark \ref{Rexiso} implies that $\phi(e_\pm)\in \bX^{L(B')}$, and satisfy
\[r_{\phi(e_+)}(\phi(e_-))=r_{e_+}(e_-)=-2,\]
so $e'\in\bY^{L(B')}(S)$. Moreover, $\phi$ maps the grading element $[e_+,e_-]$ to $[\phi(e_+),\phi(e_-)]$, which is a grading element of $L(B',e')$. Since both gradings are TKK-gradings, they are determined by their grading elements according to Lemma \ref{Lderivation}, whence $\phi$ is an isomorphism of graded Lie algebras as claimed.
\end{proof}

\begin{Cor} Let $L'$ be a TKK-algebra of Brown type $\E_8$ over $R$. Then $L'$ is isomorphic to $L(B)$ as a Lie algebra if and only if $L'$ is isomorphic, as a graded Lie algebra, to $L(B,e)$ for some $e\in\bY^{L(B)}(R)$.
\end{Cor}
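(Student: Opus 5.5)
The plan is to prove the two implications separately; the substantive one is the forward direction.

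For the backward direction, suppose $L'$ is graded isomorphic to $L(B,e)$ for some $e\in\bY^{L(B)}(R)$. Since $L(B,e)=L(B)$ as an ungraded Lie algebra, forgetting the gradings gives a Lie algebra isomorphism $L'\simeq L(B)$, and nothing more is needed.

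For the forward direction, let $\psi:L(B)\to L'$ be a Lie algebra isomorphism, with $L'$ carrying its given TKK-grading. The steps are as follows.

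1. Since $L'$ is a TKK-algebra, it has a grading element $z'\in L'_0$ whose adjoint is the grading derivation; this holds fppf-locally, and by uniqueness of such elements (regularity of $\tau$ and simplicity) it descends. I would prefer to avoid relying on this, and instead use Lemma~\ref{Lgriso}.

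2. Lemma~\ref{Lgriso} applies to the Lie algebra $L'$ and shows that $L'=L(B')$ for some Brown algebra $B'$. Its proof transports the grading of $L(B)$ along $\psi$, however, and this may differ from the given grading on $L'$. So I would argue directly: show that the given TKK-grading on $L'$ equals the $e''$-grading for some $e''\in\bY^{L'}(R)$. Then set $e=\psi^{-1}(e'')$, which lies in $\bY^{L(B)}(R)$ by Remark~\ref{Rexiso} and the computation in Proposition~\ref{Pgriso}. Proposition~\ref{Pgriso}, applied to $\psi^{-1}$ (its proof only uses Remark~\ref{Rexiso} and Lemma~\ref{Lderivation}), then shows that $\psi^{-1}$ is a graded isomorphism from $L'$ with the $e''$-grading to $L(B,e)$, which is what we want.

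3. To find $e''$, use that $L'_{\pm2}$ are projective of rank one, so they need not be free. Here I would exploit the specific form of $L'$. Since $L'\simeq L(B)$, I would take $L'$ with its given grading and check whether $L'$ is isomorphic, as a graded algebra, to $L(B')$ with $B'^-$ free. My guess is that the rank-one module $L'_2$ is free because $L'_2\otimes L'_{-2}\to L'_0$, $(s,t)\mapsto[s,t]$, relates it to $L'_{-2}$ through $\tau$, which makes $L'_{-2}\simeq (L'_2)^\vee$. Freeness itself, however, must come from the hypothesis $L'\simeq L(B)$.

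4. An alternative way to get freeness is to compare classes. The gradings of $L'$ are $\bG'$-conjugate fppf-locally, so the graded forms correspond to $\bG/\bAutgr$-points. The quotient $\bG/\bAutgr(L)$ parametrizes TKK-gradings. The fibre of $\bY\to\bG/\bAutgr$, sending $e\mapsto$ its grading, is a $\Gm$-torsor (the pairs $(\lambda e_+,\lambda^{-1}e_-)$ with the given components), whose class is that of the line bundle $L'_2$. The line bundle must therefore be trivial to lift. Indeed, $\mathbf{Stab}(e)=\bInv(Q)$ and $\bAutgr=\bStr$, with $\bStr/\bInv\simeq\Gm$ by the split sequence.

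5. Triviality of $L'_2$: $L'_2$ is an invertible module, and the Lie algebra isomorphism alone may not force it to be free. So I would instead apply $\psi^{-1}$ first. The grading on $L(B)$ transported from $L'$ is a TKK-grading with grading element $z=\psi^{-1}(z')$, and $L(B)_2^{\ad z}\simeq L'_2$.

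The main obstacle is producing a unimodular generator of the $+2$-component, that is, proving that this invertible module is free. I expect to extract it from the same mechanism as in Lemma~\ref{Lgriso} via \cite[Lemma 4.16]{Sta}: an element $x_+\in L'_1$ with $[x_+,x_-]=z'$ yields a structurable $B'$, and $B'^-\simeq L'_2$. I would then try to show that $B'$ contains a conjugate invertible skew element using the isotopy with $B$ from Lemma~\ref{Lstructurable}, and then take $e''=(j'_+,\mu'^{-1}j'_-)$.
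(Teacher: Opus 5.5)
Your backward direction is fine. Your objection to Lemma~\ref{Lgriso} is also correct: it realises $L'$ as $L(B')$ only for the grading transported along $\psi$, not for the given one. Step~2, together with the torsor picture of step~4, is the right reduction, and it shows that for the given grading the forward direction holds \emph{exactly} when $L'_2$ is free. Indeed, suppose $L'_2=Re''_+$. Then $L'_{-2}$, being dual to $L'_2$ under the bracket, is free, and a suitably rescaled generator $e''_-$ makes $\ad_{[e''_+,e''_-]}$ equal to the given grading derivation. So $e''\in\bY^{L'}(R)$, and Proposition~\ref{Pgriso} applied to $\psi^{-1}$ finishes. Conversely, $L(B,e)_2$ contains the unimodular $e_+$ and is therefore free. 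The genuine gap is that you never prove $L'_2$ is free. Steps 3--5 are only hopes, and step~5 is circular: Lemma~\ref{Lstructurable} needs a \emph{graded} isomorphism between $L(B)$ and $L'$ with its given grading, which is precisely what you are trying to produce. Nor does anything guarantee $x_\pm\in L'_{\pm1}$ with $[x_+,x_-]=z'$.

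Moreover, the gap cannot be closed, because freeness fails in general. Let $R\ni\tfrac16$ be a Dedekind domain with an invertible module $\mathcal L$ such that $\mathcal L^{\otimes2}$ is not free. For instance, take the coordinate ring of an affine elliptic curve over $\C$ and $\mathcal L$ of non-torsion class. Let $L=L^s_R$ and $L'=\bigoplus_iL_i\otimes\mathcal L^{\otimes i}$ with the induced bracket. This is the twist of $L$ by $[\mathcal L]\in H^1(R,\Gm)$ along the grading cocharacter, hence a TKK-algebra. Since $L'_2\simeq\mathcal L^{\otimes 2}$ is not free, $L'$ is not graded isomorphic to any $L(B,e)$. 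However, the grading cocharacter is the composite of the diagonal torus $\Gm\to\mathbf{SL}_2$ with the homomorphism $\mathbf{SL}_2\to\bG$ integrating the $\mathfrak{sl}_2$-triple $(1_+,V_{1,1},1_-)$. The image of $[\mathcal L]$ in $H^1(R,\mathbf{SL}_2)$ is the class of $\mathcal L\oplus\mathcal L^{\vee}$, which is trivial since $\mathcal L\oplus\mathcal L^{\vee}\simeq R^2$ over a Dedekind domain (Steinitz). Hence $L'\simeq L$ as Lie algebras.

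The paper's own proof is exactly the route you set aside: Lemma~\ref{Lgriso}, then the standard extremal pair of $L(B')$, then Proposition~\ref{Pgriso}. That argument is valid only when the given grading is the standard grading of some $L(B')$ with $B'^-$ free. The transported grading is one such case, and there the statement is tautological. So the statement needs an extra hypothesis, such as freeness of $L'_2$, and under it your step~2 is already a complete proof.
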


\begin{proof} The if-direction is clear since $L(B,e)=L(B)$ as a Lie algebra. Conversely, assume that $\phi:L'\to L(B)$ is an isomorphism of Lie algebras. Then $L'=L(B')$ by Lemma \ref{Lgriso}. Let $e'$ be the standard extremal pair of $L(B')$ and let $e$ be its image under $\phi$. Then by Proposition \ref{Pgriso}, we have $e\in\bY^{L(B)}(R)$ and $\phi$ is an isomorphism of graded Lie algebras $L'\to L(B,e)$.
\end{proof}

Thus to understand the graded structure of those $L'$ that are isomorphic to $L(B)$, it is enough to understand $L(B,e)$ for $e\in \bY(R)$. We will identify this as the twist of $L=L(B)$ by the torsor $\bE_e$ above. Let us first recall how such twists work.

Consider the assignment that to each $S\in\Ralg$ assigns the set of equivalence classes $(\bE_e(S)\times L_S)/\sim$, where $\sim$ is the equivalence relation defined by
\[(\phi,x)\sim (\phi',x') \quad \Longleftrightarrow\quad \exists \rho\in \bInv(Q)(S): (\phi\rho,x)=(\phi',\rho(x')).\]
Then this is a set-valued functor on $\Ralg$, hence a presheaf. We denote its associated fppf-sheaf by $\bE_e\wedge L$. This is a sheaf of algebras. By the universal property of associated sheaves, the algebra structure is determined by the algebra structure on $(\bE_e(S)\times L_S)/\sim$ for those $S$ such that $\bE_e(S)\neq\emptyset$. Note that $\bInv(Q)(S)$ acts transitively on each such $\bE_e(S)$, so we may fix $\phi\in\bE_e(S)$ and write every element of $\bE_e(S)$ as the equivalence class of $(\phi,x)$ for some $x\in L_S$. Using the notation $\overline{(\phi,x)}$  for the equivalence class, the addition, scaling and bracket are defined as
\[\overline{(\phi,x)}+\overline{(\phi,y)}=\overline{(\phi,x+y)},\quad s\overline{(\phi,x)}=\overline{(\phi,sx)} \quad \text{and}\quad [\overline{(\phi,x)},\overline{(\phi,y)}]=\overline{(\phi,[x,y])}\]
for all $x,y\in L_S$ and $s\in S$. Thus the bracket is antisymmetric and satisfies the Jacobi identity, since so does the bracket in $L$, so  this is a Lie algebra. It is $\Z$-graded by declaring, for each $i\in\Z$, that the degree of $\overline{(\phi,x)}$ is $i$ if and only if $\deg_{L}x=i$. This is well-defined since if $\overline{(\phi,x)}=\overline{(\phi',x')}$, then $x=\rho(x')$ for some $\rho\in\bInv(Q)(S)\subseteq\bAutgr(L)(S)$.

\begin{Prp}\label{Passign} For each $e\in\bY^L(R)$, the twist $\bE_e\wedge L(B)$ is canonically isomorphic, as a graded Lie algebra, to $L(B,e)$. Namely, for all $S\in\Ralg$ with $\bE_e(S)\neq\emptyset$, the map
\[\Psi_e(S): (\bE_e(S)\times L(B)_S)/\sim \to L(B,e)_S,\qquad \overline{(\phi,x)}\mapsto \phi(x),\]
is an isomorphism of graded Lie algebras and induces the desired isomorphism. 
\end{Prp}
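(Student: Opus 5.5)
We verify the stated properties of $\Psi_e(S)$ directly on the presheaf level, and then pass to the associated fppf-sheaf using its universal property. Throughout, fix $S\in\Ralg$ with $\bE_e(S)\neq\emptyset$.

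\emph{Well-definedness.} Suppose $(\phi,x)\sim(\phi',x')$, so $\phi'=\phi\rho$ and $x=\rho(x')$ for some $\rho\in\bInv(Q)(S)$. Then $\phi'(x')=\phi\rho(x')=\phi(x)$, so $\Psi_e(S)$ does not depend on the representative. Since $\bE_e(S)\neq\emptyset$, every class can be written as $\overline{(\phi,x)}$ for a fixed $\phi\in\bE_e(S)$, because $\bInv(Q)(S)$ acts simply transitively on $\bE_e(S)$. In this description $\Psi_e(S)$ is $x\mapsto\phi(x)$. Since the module and Lie operations on the twist were defined componentwise through this $\phi$, and $\phi$ is an automorphism of $L_S$, the map $\Psi_e(S)$ is $S$-linear and preserves brackets. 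It is bijective with inverse $y\mapsto\overline{(\phi,\phi^{-1}(y))}$. Naturality in $S$ is immediate, since both the class and $\phi(x)$ are compatible with base change.

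\emph{Gradings.} Here we use the defining property $\phi\in\bE_e(S)$, namely $\phi(1_+)=e_+$ and $\phi(\mu^{-1}1_-)=e_-$. Then
\[\phi([1_+,\mu^{-1}1_-])=[e_+,e_-],\]
so $\phi\circ\ad_{[1_+,\mu^{-1}1_-]}=\ad_{[e_+,e_-]}\circ\phi$. Now $\ad_{[1_+,\mu^{-1}1_-]}$ is the grading derivation of the standard grading on $L$. Lemma~\ref{Lderivation} applies to both the standard grading and the $e$-grading, since both are TKK-gradings by the preceding proposition and base change to $S$. Hence $\phi$ maps $(L_S)_i$ onto $(L_S)^{D}_i=L(B,e)_i\otimes S$, where $D=\ad_{[e_+,e_-]}$. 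Since degree in the twist is defined as the degree of $x$ in $L$, the map $\Psi_e(S)$ is graded.

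\emph{Descent to the sheaf.} The family $\Psi_e(S)$ is defined on the presheaf for those $S$ with $\bE_e(S)\neq\emptyset$, and the target $S\mapsto L(B,e)_S$ is an fppf-sheaf. Because $\bE_e$ is a torsor, the rings $S$ with $\bE_e(S)\neq\emptyset$ form a cofinal system of fppf-covers, and naturality guarantees compatibility on overlaps. Therefore the $\Psi_e(S)$ glue to a morphism of sheaves $\bE_e\wedge L\to L(B,e)$. This morphism is an isomorphism, because it is one locally, and it respects brackets and gradings, because these are local conditions.

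The main obstacle is this last sheafification step. One must make sure that the algebra structure and the grading on the sheaf really are the ones induced from these $S$. That is exactly how they were defined, so the gluing argument suffices. The map is canonical because it involves no choices beyond those shown above to be irrelevant.
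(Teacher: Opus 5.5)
Your proof is correct and follows essentially the same route as the paper: well-definedness from the definition of $\sim$, linearity, bracket-compatibility and bijectivity from $\phi$ being an automorphism, gradedness from $\phi$ mapping the grading element $[1_+,\mu^{-1}1_-]$ to $[e_+,e_-]$, and then passage to the associated sheaf. The only difference is that you re-derive the gradedness step inline (via Lemma \ref{Lderivation}), which is exactly the argument of Proposition \ref{Pgriso} that the paper cites, and you spell out the sheafification step in more detail than the paper does.
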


\begin{proof} We need to prove that the map $\Psi_e(S)$ is an isomorphism of graded Lie algebras whenever $S$ is such that $\bE_e\neq\emptyset$. Given that, the result follows by the definition of associated sheaves. The map is well-defined by definition of $\sim$.

If $\phi\in \bE_e(S)$, then $\phi\in\bAut(L(B))$ and satisfies $(\phi(1_+),\phi(\mu^{-1}1_-))=e$. By Proposition \ref{Pgriso}, this implies that $\phi$ is an isomorphism of graded Lie algebras $L(B)\to L(B,e)$. Thus $\Psi_e$ maps $\overline{(\phi,x)}+\overline{(\phi,y)}=\overline{(\phi,x+y)}$ to
\[\phi(x+y)=\phi(x)+\phi(y)=\Psi_e(\overline{(\phi,x)})+\Psi_e(\overline{(\phi,y)}),\]
and $s\overline{(\phi,x)}=\overline{(\phi,sx)}$ to 
\[\phi(sx)=s\phi(x)=s\Psi_e(\overline{(\phi,x)}),\]
and $[\overline{(\phi,x)},\overline{(\phi,y)}]=\overline{(\phi,[x,y])}$
to
\[\phi([x,y])=[\phi(x),\phi(y)]=[\Psi_e(\overline{(\phi,x)}),\Psi_e(\overline{(\phi,y)})].\]
Thus it is a Lie algebra homomorphism, which is bijective by the bijectivity of $\phi$. The fact that it is an isomorphism of graded Lie algebras follows from the corresponding fact for $\phi$. This completes the proof.
\end{proof}

Thus the TKK-gradings on $L(B)$ correspond precisely to the twists of $L(B)$ by the torsors $\bE_e$, where $e$ ranges over all extremal pairs, i.e.\ elements of $\bY^L(R)$.

We finish by proving that non-trivial twists occur, and see what it means for the algebras. First, we outline the relations between various degrees of equivalence.

\begin{Prp} Let $B$ and $B'$ be Brown algebras over $R$ containing conjugate invertible skew elements, and consider the following three statements:
\begin{enumerate}[(i)]
    \item The Brown algebras $B$ and $B'$ are isotopic.
    \item The Lie algebras $L(B)$ and $L(B')$ are graded isomorphic.
    \item The Lie algebras $L(B)$ and $L(B')$ are isomorphic.
\end{enumerate}
Then $(i)\Leftrightarrow (ii)\Rightarrow (iii)$. If $R$ is a field, then $(ii)\Leftrightarrow (iii)$. 
\end{Prp}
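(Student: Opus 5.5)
The plan is to prove the implications in the order (ii)$\Rightarrow$(i), (i)$\Rightarrow$(ii), (ii)$\Rightarrow$(iii), and then the converse over fields.

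\emph{(ii)$\Rightarrow$(i).} Let $\phi:L(B)\to L(B')$ be a graded isomorphism. By Lemma \ref{Lstructurable}, its restriction $\phi_1:B\to B'$ is an isotopy, so $B$ and $B'$ are isotopic.

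\emph{(i)$\Rightarrow$(ii).} Here I would follow \cite[Proposition 12.3]{AH}, using the same formulas already invoked in the proof of Proposition \ref{Pstab}. Given an isotopy $\phi:B\to B'$ with companion $\widehat\phi$, define $\Phi:L(B)\to L(B')$ degreewise:
\begin{itemize}
\item $\Phi_1=\phi$ and $\Phi_{-1}=\widehat\phi$;
\item $\Phi_0(V_{x,y})=V'_{\phi(x),\widehat\phi(y)}$, which is well defined because it equals $\phi V_{x,y}\phi^{-1}$;
\item $\Phi_2(s)=\tfrac12[\phi(x),\phi(y)]'$ when $s=[x,y]$, and $\Phi_{-2}$ analogously with $\widehat\phi$.
\end{itemize}
The $\Phi_{\pm2}$ pieces are where the main work lies. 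Well-definedness and the bracket relations involving $L_{\pm2}$ require the identity $\phi(jw)=\tfrac12(\phi(j)\phi(1)^*-\phi(1)\phi(j)^*)\widehat\phi(w)$ from \cite{AH} together with Proposition \ref{PInv}(i). I expect verifying the brackets into and out of $L_{\pm2}$ to be the main obstacle.

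A cleaner route avoids these checks. Both sides are finitely presented and compatible with base change, so we may work fppf-locally and invoke \cite[Proposition 12.3]{AH}, which is stated over arbitrary rings. Granting that result as an isomorphism $\bStr\simeq\bAutgr$ extended to the two-algebra setting, an isotopy extends to a graded isomorphism, since the construction is canonical and hence descends.

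\emph{(ii)$\Rightarrow$(iii).} This is trivial: a graded isomorphism is in particular an isomorphism of Lie algebras.

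\emph{Over a field, (iii)$\Rightarrow$(ii).} Let $\psi:L(B)\to L(B')$ be an isomorphism. By Proposition \ref{Pgriso}, $\psi$ is a graded isomorphism $L(B,e)\to L(B')$, where $e$ is the image under $\psi^{-1}$ of the standard extremal pair of $L(B')$. It then suffices to show that $L(B,e)$ is graded isomorphic to $L(B)$, i.e.\ to find $g\in\bG(R)$ carrying the standard pair $(1_+,\mu^{-1}1_-)$ to $e$ up to rescaling.

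By Proposition \ref{Trans}, some $g\in\bG(k)$ sends $e$ to $(\lambda1_+,(\lambda\mu)^{-1}1_-)$ with $\lambda\in k^*$. The graded structure is determined by the grading element $[e_+,e_-]$, and this grading element is unchanged by the rescaling $(\lambda 1_+,(\lambda\mu)^{-1}1_-)$. Therefore $g^{-1}$ maps the standard grading element to $[e_+,e_-]$, and by Remark \ref{Rderivation} and Lemma \ref{Lderivation} it is a graded isomorphism $L(B)\to L(B,e)$. Composing with $\psi$ gives (ii).
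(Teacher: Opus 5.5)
Your proof is correct in substance. The field case is exactly the paper's argument: pull the standard pair of $L(B')$ back to some $e$ via Proposition~\ref{Pgriso}, normalize $e$ to $(\lambda1_+,(\lambda\mu)^{-1}1_-)$ via Proposition~\ref{Trans}, and note that the grading element $[\lambda1_+,(\lambda\mu)^{-1}1_-]=[1_+,\mu^{-1}1_-]$ does not depend on $\lambda$.

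The difference lies in (i)$\Leftrightarrow$(ii). The paper compares classes in $H^1_{\mathrm{fppf}}(R,\bStr(B))$ and $H^1_{\mathrm{fppf}}(R,\bAutgr(L(B)))$ and gets both directions at once from Proposition~\ref{Pstrgr}. Implicitly, restriction to degree one (Lemma~\ref{Lstructurable}) is an equivariant morphism from the torsor of graded isomorphisms $L(B)\to L(B')$ to the torsor of isotopies $B\to B'$, lying over the isomorphism $\bAutgr(L(B))\to\bStr(B)$. It is therefore an isomorphism of torsors, which takes care of all descent issues. You argue with points instead. Your (ii)$\Rightarrow$(i), straight from Lemma~\ref{Lstructurable}, is more elementary than the paper's and does not need Proposition~\ref{Pstrgr} at all. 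Your (i)$\Rightarrow$(ii) needs the same input as the paper, and that is where your write-up should be tightened.

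First, drop the explicit construction; its normalization is off anyway. A Lie homomorphism must satisfy $\Phi_2([x,y])=[\phi(x),\phi(y)]'$. The factor $\tfrac12$ in the formula from \cite{AH} quoted in the proof of Proposition~\ref{Pstab} only reflects $[j,1]=2j$ in $L_2$.

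Second, instead of ``granting'' a two-algebra version of Proposition~\ref{Pstrgr}, derive it:
\begin{enumerate}[(a)]
\item Choose a faithfully flat $S$ and an isomorphism $\alpha\colon B'_S\to B_S$.
\item Then $\alpha\phi_S$ is an autotopy of $B_S$, so by Proposition~\ref{Pstrgr} it extends to some $\Psi\in\bAutgr(L(B))(S)$.
\item With $L(\alpha)$ the graded isomorphism induced by $\alpha$ (Remark~\ref{Rbasechange2}), $L(\alpha)^{-1}\Psi$ is a graded isomorphism extending $\phi_S$.
\item Such an extension is unique over any $R$-ring. If $\Psi_1,\Psi_2$ are two of them, then $\Psi_2^{-1}\Psi_1\in\bAutgr(L(B))$ restricts to the identity on $L_1$, so it is trivial by the injectivity in Proposition~\ref{Pstrgr}.
\item Hence the two pullbacks to $S\otimes_RS$ coincide, and the extension descends to a graded isomorphism $L(B)\to L(B')$.
\end{enumerate}
With this, your argument is complete.
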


\begin{proof} Consider the chain of maps
\[H^1_{\mathrm{fppf}}(R,\bStr(B))\to H^1_{\mathrm{fppf}}(R,\bAutgr(L(B)))\to H^1_{\mathrm{fppf}}(R,\bAut(L(B)))\]
induced by the inclusions $\bStr(B)\to\bAutgr(L(B))\to\bAut(L(B))$. In each cohomology, the Brown algebra $B$ gives rise to a cohomology class, and so does $B'$;  in each cohomology, the class arising from $B$ is the base point. Note that  $B$ and $B'$ are isotopic if and only if the classes in the leftmost cohomology coincide, and $L(B)$ and $L(B')$ are graded isomorphic if and only if the classes in the middle cohomology coincide, and isomorphic if and only if the classes in the rightmost cohomology coincide. It thus follows that $(i)\Rightarrow(ii)\Rightarrow(iii)$. Since the inclusion $\bStr(B)\to \bAutgr(L(B))$ is an isomorphism by Proposition \ref{Pstrgr}, we get $(ii)\Rightarrow(i)$. 

Assume that $R=K$ is a field and that $L(B')$ is isomorphic to $L(B)$. Then $L(B')$ is graded isomorphic to $L(B,e)$ for some $e\in\bY^{L(B)}(K)$ by Proposition \ref{Pgriso}. By Propositions \ref{Trans} and \ref{Pgriso}, this is in turn graded isomorphic to $L(B,e_\lambda)$, where $e_\lambda=(\lambda1_+,(\lambda\mu)^{-1}1_-)$, for some $\lambda\in K^*$. But a grading element of $L(B,e_\lambda)$ is $[\lambda1_+,(\lambda\mu)^{-1}1_-]=[1_+,\mu^{-1}1_-]$, which is a grading element of $L(B)$. Thus $L(B,e_\lambda)$ is graded isomorphic to $L(B)$. Therefore $(iii)\Rightarrow(ii)$ when $R$ is a field.
\end{proof}

Lastly we show that the implication $(iii)\Rightarrow (ii)$ fails when $R$ is not a field. First we prove that the torsor of Theorem \ref{Tquotient} is non-trivial,  using the technique of  \cite{Gil1}.

\begin{Prp}\label{Pnontriv} Let $B$ be the (split) Brown algebra over $\C$ with $j=\diag(1,-1)$. Then the fppf-torsor $\bG\to\bY^{L(B)}$ under $\bInv(Q(B,j))$ is non-trivial.
\end{Prp}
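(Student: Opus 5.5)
Over $\C$, the plan is to pass to complex points and use topology, in the spirit of \cite{Gil1}. Write $\bH=\bInv(Q(B,j))$. Suppose the torsor $\Pi:\bG\to\bY^{L(B)}$ were trivial. Then there would be a $\bY$-isomorphism $\bG\simeq \bY^{L(B)}\times\bH$, compatible with the right $\bH$-action. In particular we would have an isomorphism of complex varieties, hence a homeomorphism of the spaces of $\C$-points in the analytic topology:
\[\bG(\C)\cong \bY^{L(B)}(\C)\times\bH(\C).\]
The torsor would also admit a section $\sigma:\bY\to\bG$. Taking $\C$-points, $\bG(\C)\to\bY(\C)$ would then be a trivial principal $\bH(\C)$-bundle.

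The key step is a homotopy-theoretic obstruction. I would pass to maximal compact subgroups. By Theorem \ref{Tquotient}, $\bY(\C)=\bG(\C)/\bH(\C)$ with $\bG(\C)=\E_8(\C)$ simply connected and $\bH(\C)=\E_7(\C)$ simply connected (Lemma \ref{Smooth}), and both deformation retract onto their compact forms $\E_8$ and $\E_7$.

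The cleanest invariant to compare would be $\pi_3$. For a compact simple simply connected group, $\pi_3\cong\Z$. A homotopy equivalence $\E_8\simeq \bY(\C)\times \E_7$ gives $\Z\cong\pi_3(\bY(\C))\oplus\Z$. Moreover, the inclusion of the fibre $\E_7\to\E_8$ would have a retraction on homotopy groups, so the induced map $\pi_3(\E_7)\to\pi_3(\E_8)$ would be injective with a complement. Since $\Z$ has no nontrivial decomposition as a direct sum with a summand $\Z$ other than the trivial one, $\pi_3(\bY(\C))=0$ and $\pi_3(\E_7)\to\pi_3(\E_8)$ would be an isomorphism.

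A stronger invariant is rational cohomology, which I would use to settle the question. The rational cohomology of $\E_8$ is an exterior algebra on generators of degrees $3,15,23,27,35,39,47,59$, and that of $\E_7$ on generators of degrees $3,11,15,19,23,27,35$. A product decomposition $\E_8\simeq \bY(\C)\times\E_7$ with Künneth would make $H^*(\E_7;\Q)$ a tensor factor of $H^*(\E_8;\Q)$. Comparing Poincaré polynomials $\prod(1+t^{d_i})$, the factor $(1+t^{11})$ would have to divide $\prod_{d\in\{3,15,23,27,35,39,47,59\}}(1+t^d)$. This is impossible: $1+t^{11}$ has primitive $22$nd roots of unity as zeros, while $1+t^d$ for odd $d$ vanishes only at primitive $2m$-th roots with $m\mid d$, and none of these $d$ is divisible by $11$. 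This contradiction shows the torsor is non-trivial.

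The main obstacle is making the topological comparison rigorous. One needs:
\begin{itemize}
\item the identification $\bY(\C)\cong\bG(\C)/\bH(\C)$ as analytic spaces, which follows from Theorem \ref{Tquotient};
\item that the algebraic trivialization induces a homeomorphism of $\C$-points;
\item that the groups of $\C$-points have the stated rational cohomology, via the Iwasawa/Cartan decomposition retracting onto compact forms.
\end{itemize}
Once these standard facts are in place, the Poincaré polynomial divisibility argument finishes the proof.
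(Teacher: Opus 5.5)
Your argument is correct. It differs from the paper's proof in the invariant that produces the contradiction.

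\textbf{The paper's route.} The paper invokes \cite[Lemma 1.4]{Als1}: a trivialization would force $\pi_n$ of the fibre to be a direct factor of $\pi_n$ of the total space. It then cites Bott--Samelson \cite[Theorem V]{BS} for $\pi_{11}(\E_7^c)\simeq\Z$ and $\pi_{11}(\E_8^c)=0$.

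\textbf{Your route.} You use only the abstract homotopy equivalence $\E_8^c\simeq\bY(\C)\times\E_7^c$ and take rational cohomology with K\"unneth. This gives $P_{\E_8}=P_{\bY}\cdot P_{\E_7}$, and $P_{\bY}$ is a genuine polynomial because its coefficients are bounded by those of $P_{\E_8}$. Evaluating at a primitive $22$nd root of unity kills the right-hand side through the factor $1+t^{11}$. The left-hand side does not vanish there, since no generator degree $d$ of $\E_8$ satisfies $d\equiv 11 \pmod{22}$.

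\textbf{Comparison.} Your inputs are the classical Hopf--Borel description of $H^*(G;\Q)$ for compact groups and an elementary root-of-unity check. You also never need compatibility of the trivialization with the fibre inclusion. The paper's proof is shorter, but it rests on an unstable integral homotopy computation. At bottom the two obstructions coincide: $\pi_*(G)\otimes\Q$ is concentrated in the degrees $2e_i+1$. So both arguments detect that the exponent $5$ of $\E_7$, giving degree $11$, is not an exponent of $\E_8$.

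\textbf{Minor points.} Your $\pi_3$ paragraph produces no contradiction and can be dropped. The identification $\bY(\C)\cong\bG(\C)/\bInv(Q)(\C)$, which you list among the obstacles, is not actually used in your final argument.
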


\begin{proof} Consider the Lie groups $H=\bInv(Q(B,j))(\C)$ and $G=\bG(\C)$, of types $\E_7$ and $\E_8$, respectively.   By \cite[Lemma 1.4]{Als1}, it is enough to prove that for some $n$, the homotopy group $\pi_n(H)$ is not a direct factor of $\pi_n(G)$. By Cartan decomposition, $H$ (resp.\ $G$) is the direct product of the real compact Lie group $H_c$ of type $\E_7$ (resp.\ the real compact Lie group $G_c$ of type $\E_8$) and a Euclidean space. Thus it suffices to prove that $\pi_n(H_c)$ is not a direct factor of $\pi_n(G_c)$. By \cite[Theorem V]{BS} we have $\pi_{11}(H_c)\simeq \Z$ and $\pi_{11}(G_c)=0$. The conclusion follows. 
\end{proof}

\begin{Cor} There exists a $\C$-ring $R$ such that $\Spec (R)$ is a smooth $\C$-variety and such that there is a Brown algebra $B$ over $R$ with $L(B)$ isomorphic to the split TKK-algebra, but not graded isomorphic to it. Equivalently, $L(B)$ is isomorphic to the split TKK-algebra without $B$ being an isotope of the split Brown algebra.
\end{Cor}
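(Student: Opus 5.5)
We take $R=\C[\bY]$, the coordinate ring of $\bY=\bY^{L^s_\C}$, where $L^s_\C=L(B^s_\C)$ with $j=\diag(1,-1)$. By Theorem \ref{Tquotient}, $\bY\simeq\bG/\bInv(Q)$ is a smooth affine $\C$-scheme, so $\Spec(R)$ is a smooth $\C$-variety. The identity morphism of $\bY$ is an $R$-point $e\in\bY(R)$, and its fibre $\bE_e=\bG_R\times_{\bY}\Spec(R)$ is precisely the torsor $\bG\to\bY$ of Proposition \ref{Pnontriv}, viewed as an $\bInv(Q)_R$-torsor over $R$. That proposition says this torsor is non-trivial.

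Let $L=L(B^s_\C)\otimes_\C R=L^s_R$ and consider $L(B^s_R,e)$. As an ungraded Lie algebra it equals $L^s_R$. It carries the $e$-grading, which is a TKK-grading. By Lemma \ref{Lgriso} and the Corollary following Proposition \ref{Pgriso}, $L(B^s_R,e)=L(B)$ for some Brown algebra $B$ over $R$, obtained by transporting the structure via a graded isomorphism. Then $L(B)$ is isomorphic to the split TKK-algebra. It remains to show that $L(B,e)$ is not graded isomorphic to $L^s_R$ with its standard grading.

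By Proposition \ref{Passign}, $L(B^s_R,e)\simeq\bE_e\wedge L^s_R$ as graded Lie algebras. Graded twisted forms of $L^s_R$ are classified by $H^1_{\mathrm{fppf}}(R,\bAutgr(L^s_R))$, which is $H^1_{\mathrm{fppf}}(R,\bStr(B^s_R))$ by Proposition \ref{Pstrgr}. The class of $\bE_e\wedge L^s_R$ is the image of $[\bE_e]$ under $f\colon H^1(R,\bInv(Q))\to H^1(R,\bStr(B^s_R))$. The Brown algebra $B^s_R$ is reduced, so Corollary \ref{Ctrivker} shows that $f$ has trivial kernel. Since $[\bE_e]$ is non-trivial, its image is non-trivial. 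Hence $L(B,e)$ is not graded isomorphic to $L^s_R$.

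The equivalent formulation follows from the preceding proposition, via the equivalence (i)$\Leftrightarrow$(ii). This requires $B$ to contain a conjugate invertible skew element. I expect this to be the main obstacle, since $B^-\simeq L(B,e)_2$ is only known to be projective of rank one. If it is not free, I would work directly with the cohomological formulation instead: non-isotopy is exactly non-triviality of the class in $H^1(R,\bStr)$.

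Alternatively, $e_+$ generates $L(B,e)_2$, because $\tau(e_+,e_-)$ is a unit. Hence $B^-$ is free, and Proposition \ref{Pconjinv} applies.

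A second point to check is that the twist-to-cohomology dictionary is compatible with the inclusion $\bInv(Q)\subseteq\bAutgr$. This is standard, because twisting by a torsor and then pushing forward along a group homomorphism commute.
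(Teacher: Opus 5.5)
Your core argument is the paper's. With $R=\C[\bY]$ and $e\in\bY(R)$ the tautological point, $\bE_e$ is the non-trivial torsor of Proposition~\ref{Pnontriv}. Proposition~\ref{Passign} identifies $\bE_e\wedge L^s_R$ with $L(B^s_R,e)$, and Corollary~\ref{Ctrivker} together with Proposition~\ref{Pstrgr} shows that its class in $H^1_{\mathrm{fppf}}(R,\bAutgr(L^s_R))$ is non-trivial. The paper reaches $e$ through Gille's bijection between $\ker(g\circ f)$ and the $\bAut(L^s)(R)$-orbits on $\bY(R)$; your direct identification of $\bE_e$ with the universal torsor $\bG\to\bY$ is cleaner.

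The step that does not hold as written is the claim that $L(B^s_R,e)=L(B)$, \emph{as a graded algebra}, for some Brown algebra $B$. Lemma~\ref{Lgriso} only transports the \emph{standard} grading along a Lie isomorphism. Applied to the identity $L^s_R\to L(B^s_R,e)$, it returns $B^s_R$ with its standard grading, not the $e$-grading. The corollary after Proposition~\ref{Pgriso} runs in the opposite direction, since it presupposes $L'=L(B')$. What you actually need is a unit for the $e$-grading: elements $x_\pm$ of degree $\pm1$ with $[x_+,x_-]=[e_+,e_-]$, as in the criterion used in the proof of Lemma~\ref{Lgriso}. Equivalently, you need a reduction of the $\bStr(B^s_R)$-torsor $f([\bE_e])$ to $\bAut(B^s_R)$, whose identity component has type $\E_6$. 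The paper's proof passes over exactly this point (``The conclusion follows''). So what both arguments establish is only that the TKK-algebra $L(B^s_R,e)$ is isomorphic, but not graded isomorphic, to $L^s_R$. Your remark that $e_+$ makes the degree-$2$ part free is correct, but it only matters once $B$ exists.

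This reduction is not a formality, and for the tautological $e$ it appears to be obstructed. Restrict along a generator of $\pi_{12}(\bY(\C))\cong\pi_{11}(\E_7)\cong\Z$ (compact forms, as in Proposition~\ref{Pnontriv}). The resulting torsor over $S^{12}$ is clutched by a generator of $\pi_{11}(\E_7)$, and an $\E_6$-reduction would force this generator into the image of $\pi_{11}(\E_6)$. View $\E_7/\E_6$ as a circle bundle over the Freudenthal variety $\E_7/(\E_6\cdot U(1))$. A Gysin computation with the Chevalley formula gives $H_{11}=H_{12}=0$, hence $\pi_{11}(\E_7/\E_6)\cong\pi_{11}(S^{10})\cong\Z/2$. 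Together with $\pi_{10}(\E_6)=0$, this makes the image of $\pi_{11}(\E_6)$ of index $2$. If this is right, no Brown algebra $B$ has $L(B)$ graded isomorphic to $L(B^s_R,e)$. The Brown-algebra form of the statement, and its isotopy reformulation, would then need an additional argument: a different choice of $e$ or of $R$, and an actual construction of the unit.
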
 

\begin{proof} Let $B^s$ be the (split) Brown algebra over $\C$ with the usual skew-element $j=\diag(1,-1)$ (satisfying $j^2=1$) and set $Q^s=Q(B^s,j)$ and $L^s=L(B^s)$. By Corollary \ref{Csmooth}, $\bY^{L^s}$ is a smooth variety and we set $R=\C[\bY^{L^s}]$. The inclusions $\bInv(Q^s)\to\bAutgr(L^s)\to\bAut(L^s)$ induce maps
\[H^1_{\mathrm{fppf}}(R,\bInv(Q^s))\overset{f}{\rightarrow} H^1_{\mathrm{fppf}}(R,\bAutgr(L^s))\overset{g}{\rightarrow} H^1_{\mathrm{fppf}}(R,\bAut(L^s)).\]
Since the torsor of Proposition \ref{Pnontriv} is non-trivial, there is $e\in \bY^{L^s}(R)$ that is not in the orbit of $(1_+,1_-)$. Now, \cite[Proposition 2.4.3]{Gil2} implies that $\ker (g\circ f)$ is in bijection with the set of $\bAut(L^s)(R)$-orbits in $\bY^{L^s}(R)$. Thus under this bijection, the orbit of $e$ corresponds to a non-trivial element $\xi$ in $\ker (g\circ f)$. By Corollary \ref{Ctrivker} and the fact that $\bAutgr(L(B))\simeq \bStr(B)$, the kernel of $f$ is trivial. Thus $f(\xi)$ is a non-trivial element in $\ker g$, that corresponds to the graded isomorphism class of $\bE_e\wedge L(B)\simeq L(B,e)$. Thus $L(B,e)$ is not graded isomorphic to $L(B)$ despite $L(B,e)=L(B)$ as (ungraded) Lie algebras. The conclusion follows.
\end{proof}

\end{document}